\def\newaliasedtheorem#1[#2]#3{
  \newaliascnt{#1@alt}{#2}
  \newtheorem{#1}[#1@alt]{#3}
  \expandafter\newcommand\csname #1@altname\endcsname{#3}
}
\numberwithin{equation}{section}
\newtheoremstyle{slanted}{\topsep}{\topsep}{\slshape}{}{\bfseries}{.}{.5em}{}
\theoremstyle{plain}
\newtheorem{theorem}{Theorem}[section]
\theoremstyle{definition}
\theoremstyle{remark}
\let\phi\varphi
\newcommand{\di}{\mathop{}\!\mathrm{d}}
\newcommand{\Ch}{{\sf Ch}}
\DeclareMathOperator{\Lip}{Lip}
\newcommand{\dist}{\mathsf{d}}
\newcommand{\meas}{\mathfrak{m}}
\DeclareMathOperator{\CD}{CD}
\DeclareMathOperator{\RCD}{RCD}
\newfont{\tmpf}{cmsy10 scaled 2500}
\begin{document}
\title{Isometric immersions of $\RCD$ spaces}
\author{
Shouhei Honda
\thanks{Tohoku University, \url{shouhei.honda.e4@tohoku.ac.jp}}
 } \maketitle

\begin{abstract} 
We prove that if an RCD space has a regular isometric immersion in a Euclidean space, then the immersion is a locally bi-Lipschitz embedding map.
This result leads us to prove that if a compact non-collapsed RCD space has an isometric immersion in a Euclidean space via an eigenmap, then the eigenmap is a locally bi-Lipschitz embedding map to a sphere, which generalizes a fundamental theorem of Takahashi in submanifold theory to a non-smooth setting. Applications of these results include a topological sphere theorem and topological finiteness theorems, which are new even for closed Riemannian manifolds.
\end{abstract}

\tableofcontents
\section{Introduction}
Nash's smooth embedding theorem states that any finite dimensional  (not necessary complete) Riemannian manifold $(M^n, g)$ can be embedded isometrically into a Euclidean space $\mathbb{R}^N$:
\begin{equation}
\Phi:M^n \hookrightarrow \mathbb{R}^N
\end{equation}
as Riemannian manifolds, that is, the pull-back metric $\Phi^*g_{\mathbb{R}^N}$ of the flat Riemannian metric $g_{\mathbb{R}^N}$ of the Euclidean space coincides with the original Riemannian metric $g$: $\Phi^*g_{\mathbb{R}^N}=g$. It is natural to ask when such an embedding map can be chosen by a canonical one. One of the main purposes of the paper is to give an answer to this question in a non-smooth setting by adopting \textit{eigenmaps} as canonical ones, with giving applications to the smooth setting. 
\subsection{Almost isometric embedding and isometric imbedding via eigenmap}
Let $(M^n, g)$ be an $n$-dimensional closed, that is, compact without boundary, Riemannian manifold. A map $\Phi=(\phi_1, \ldots, \phi_k):M^n \to \mathbb{R}^k$ is said to be a \textit{$k$-dimensional eigenmap} if each $\phi_i$ satisfies $\Delta \phi_i+\mu_i\phi_i\equiv 0$ for some $\mu_i \in \mathbb{R}$, that is, it is an eigenfunction of $-\Delta$ on $(M^n, g)$, or vanishes everywhere. Moreover we say that $\Phi$ is \textit{irreducible} if each $\phi_i$ is not a constant function.

B\'erard-Besson-Gallot proved in \cite{BerardBessonGallot} that for any $t \in (0, \infty)$, a map $\Phi_{t, \infty}:M^n \to \ell^2$ defined by
\begin{equation}
\Phi_{t, \infty}(x):=\left(c(n)t^{(n+2)/2}e^{-\lambda_it}\psi_i(x)\right)_i
\end{equation}
is a smooth embedding map with the following asymptotic formula:
\begin{equation}\label{asympp}
\Phi_{t, \infty}^*g_{\ell^2} =  g - \frac{2t}{3}\left(\mathrm{Ric}_{M^n}^g - \frac{1}{2}\mathrm{Scal}^g_{M^n}\,  g\right) + O(t^2), \quad t \to 0^+,
\end{equation}
where $c(n)$ is a positive constant depending only on $n$ defined by
\begin{equation}
c(n):=(4\pi)^n \left(\int_{\mathbb{R}^n}|\partial_{x_1}(e^{-|x|^2/4})|^2\di x\right)^{-1},
\end{equation}
$g_{\ell^2}$ is the standard flat Riemannian metric of $\ell^2$, $\lambda_i$ denotes the $i$-th eigenvalue ($\lambda_0=0$) of $-\Delta$ counted with multiplicities, and $\psi_i$ is a corresponding eigenfunction with the standard normalization:
\begin{equation}
\frac{1}{\mathrm{vol}_gM^n}\int_{M^n}|\psi_i|^2\di \mathrm{vol}_g=1.
\end{equation}
Thus (\ref{asympp}) says that $\Phi_{t, \infty}$ is an \textit{almost} isometric embedding when $t$ is small.

Portegies established a quantitative finite dimensional reduction of this result in \cite{Portegies16}, that is, he proved: for all $\epsilon, \tau, d \in (0, \infty)$ and any $K \in \mathbb{R}$ there exists $t_0:=t_0(n, K, \epsilon, \tau, d) \in (0, 1)$ such that for any $t \in (0, t_0)$ there exists $N_0:=N_0(n, K, \epsilon, \tau, d, t) \in \mathbb{N}$ such that if $(M^n, g)$ satisfies $\mathrm{diam}(M^n, \dist_g) \le d, \mathrm{Ric}_{M^n}^g \ge K$ and $\mathrm{inj}_{M^n}^g\ge \tau$, where $\mathrm{diam}(M^n, \dist_g)$ and $\mathrm{inj}_{M^n}^g$ denote the diameter and the injectivity radius, respectively, then for any $m \in \mathbb{N}_{\ge N_0}$, an irreducible eigenmap $\Phi_{t, m}:M^n \to \mathbb{R}^{m}$ defined by 
\begin{equation}\label{008877}
\Phi_{t, m}(x):=\left(c(n)t^{(n+2)/2}e^{-\lambda_it}\psi_i(x)\right)_{i=1}^m
\end{equation}
is a smooth embedding map with
\begin{equation}\label{78}
\|\Phi_{t, m}^*g_{\mathbb{R}^{m}}-g\|_{L^{\infty}}<\epsilon.
\end{equation}
Thus (\ref{78}) says that for any closed Riemannian manifold, the Riemannian metric can be approximated by the pull-back metric of an eigenmap. Let us emphasize that for a fixed $i \in \mathbb{N}$, we have $\|c(n)t^{(n+2)/2}e^{-\lambda_it}\psi_i\|_{L^{\infty}} \to 0$ as $t \to 0^+$.

Next we discuss the equality case, that is, let us assume that there exists an eigenmap $\Phi:M^n \to \mathbb{R}^{k+1}$ (which is not necessary an embedding map) with
\begin{equation}\label{tttsxxxx}
\Phi^*g_{\mathbb{R}^{k+1}}=g.
\end{equation}

A fundamental theorem in submanifold theory proved in \cite{takahashi} by Takahashi states that (\ref{tttsxxxx}) implies that $\Phi$ is a minimal immersion in a sphere $\mathbb{S}^k(a):=\{x \in \mathbb{R}^{k+1}; |x|_{\mathbb{R}^{k+1}}=a\}$ for some $a \in (0, \infty)$. Conversely any minimal immersion of $(M^n, g)$ in $\mathbb{S}^k(a)$ can be obtained by an eigenmap along this way. Next let us discuss the above for non-smooth spaces, so-called $\RCD(K, N)$ spaces whose introduction is given in the next subsection.
\subsection{Non-smooth spaces: $\RCD(K, N)$ spaces}

A triple $(X, \dist, \meas)$ is said to be a \textit{metric measure space} if $(X, \dist)$ is a complete separable metric space and $\meas$ is a Borel measure on $X$ with full support. The \textit{curvature-dimension condition $\CD(K, N)$} introduced in \cite{LottVillani} by Lott-Villani and \cite{Sturm06a, Sturm06b} by Sturm, independently, for metric measure space $(X, \dist, \meas)$ defines the lower bound $K \in \mathbb{R}$ on the Ricci curvature and the upper bound $N \in [1, \infty]$ of the dimension of $(X, \dist, \meas)$ in a synthetic sense. Then adding a Riemannian structure to a metric measure space satisfying the $\CD(K, N)$ condition, \textit{$\RCD(K, N)$ spaces} are introduced in \cite{AmbrosioGigliSavare14} by Ambrosio-Gigli-Savar\'e (when $N=\infty$), \cite{AmbrosioMondinoSavare} by Ambrosio-Mondino-Savar\'e (treating $\RCD^*(K, N)$ spaces), \cite{Gigli13, Gigli1} by Gigli (treating the infinitesimally Hilbertian condition), and \cite{ErbarKuwadaSturm} by Erbar-Kuwada-Sturm (treating $\RCD^*(K, N)$ spaces). Roughly speaking, $(X, \dist, \meas)$ is said to be an $\RCD(K, N)$ space if the following holds:
\begin{itemize}
\item $\mathrm{Ric}_{(X, \dist, \meas)}\ge K$, $\dim_{(X, \dist, \meas)} \le N$, and the Sobolev space $H^{1, 2}(X, \dist, \meas)$ is a Hilbert space.  
\end{itemize}
See subsection \ref{rcddef1} for the precise definition.

Thanks to quick developments on the study of $\RCD(K, N)$ spaces, we have nice structure results on $\RCD(K, N)$ spaces. For example it is proved in \cite{BrueSemola} by Bru\`e-Semola that for any $\RCD(K, N)$ space $(X, \dist, \meas)$ with $N<\infty$, the \textit{essential dimension}, denoted by $\dim_{\dist, \meas}(X) \in \mathbb{N} \cap [1, N]$, is well-defined by the unique $n \in \mathbb{N}$ satisfying that the $n$-dimensional regular set has positive $\meas$-measure. This gave a generalization of a result proved in \cite{ColdingNaber} by Colding-Naber to $\RCD(K, N)$ spaces.
 
Typical examples of $\RCD(K, N)$ spaces can be found in weighted Riemannian manifolds $(M^n, \dist_g, e^{-f}\mathrm{vol}_g)$, where $f \in C^{\infty}(M^n)$. That is, $(M^n, \dist_g, e^{-f}\mathrm{vol}_g)$ is an $\RCD(K, N)$ space if and only if $N \ge n$ holds, and the $N$-Bakry-\'Emery Ricci curvature (the (LHS) of (\ref{joju})) is bounded from below by $K$:
\begin{equation}\label{joju}
\mathrm{Ric}_{M^n}^g+\mathrm{Hess}_f^g-\frac{\dist f\otimes \dist f}{N-n}\ge Kg.
\end{equation}

Let us fix a compact $\RCD(K, N)$ space $(X, \dist, \meas)$ with $N<\infty$, that is, it is an $\RCD(K, N)$ space, and $(X, \dist)$ is a compact metric space. 
Then as in the case of closed Riemannian manifolds, the spectrum of $-\Delta$ on $(X, \dist, \meas)$ is discrete and unbounded. In particular an eigenmap $\Phi:X \to \mathbb{R}^{k}$ on $(X, \dist, \meas)$ makes sense. Note that for a weighted Riemannian manifold $(M^n, \dist_g, e^{-f}\mathrm{vol}_g)$, the corresponding Laplacian $\Delta_f$ as an $\RCD(K, N)$ space coincides with the Witten Laplacian: $\Delta_fh=\mathrm{tr}(\mathrm{Hess}_h)-g(\nabla f, \nabla h)$.

Then it is natural to ask whether the Riemannian metric $g_X$, which is also well-defined (Proposition \ref{Riemdef}), can be approximated by the pull-back metric of an eigenmap or not. A positive answer to this question was obtained in \cite{AHPT} by Ambrosio-Portegies-Tewodrose and the author for \textit{non-collapsed} $\RCD(K, n)$ spaces whose definition is introduced in \cite{DePhillippisGigli} by De Philippis-Gigli as a synthetic counterpart of non-collapsed Ricci limit spaces  (Definition \ref{noncodef}). Note that non-collapsed $\RCD(K, n)$ spaces have nicer properties rather than that of general $\RCD(K, N)$ spaces (Theorem \ref{thm:bishop}). 

A main result of \cite{AHPT} is stated as follows: for any $p \in[1, \infty)$ and all $\epsilon, \tau, d \in (0, \infty)$ and any $K \in \mathbb{R}$ there exists $t_0=t_0(n, K, \epsilon, \tau, d, p) \in (0, 1)$ such that for any $t \in (0, t_0)$ there exists $N_0:=N_0(n, K, \epsilon, \tau, d, p, t) \in \mathbb{N}$ such that if a non-collapsed $\RCD(K, n)$ space $(X, \dist, \mathcal{H}^n)$, where $\mathcal{H}^n$ denotes the $n$-dimensional Hausdorff measure,  satisfies $\mathrm{diam}(X, \dist) \le d$ and $\mathcal{H}^n(X) \ge \tau$, then for any $m \in \mathbb{N}_{\ge N_0}$, an eigenmap $\Phi_{t, m}:X \to \mathbb{R}^{m}$ defined by (\ref{008877}) with the standard normalization of eigenfunctions $\psi_i$ similarly
satisfies
\begin{equation}\label{781}
\|\Phi_{t, m}^*g_{\mathbb{R}^{m}}-g_X\|_{L^{p}}<\epsilon.
\end{equation}
Let us remark that we do not know whether $\Phi_{t, m}$ is actually a topological embedding or not.
It is also worth pointing out that this was new even for closed Riemannian manifolds and that the finiteness of $p$, $p<\infty$, is sharp which is a different point from the case of closed Riemannian manifolds. For example, any $n$-dimensional closed ball in $\mathbb{R}^n$ with $\mathcal{H}^n$ is a non-collapsed $\RCD(0, n)$ space, but it does not satisfy the $L^{\infty}$-version of (\ref{781}). Compare with (\ref{78}). Therefore (\ref{781}) says that for any compact non-collapsed $\RCD(K, n)$ space, the Riemannian metric can be approximated by the pull-back metric of an eigenmap in the $L^p$-sense for any $p \in [1, \infty)$.
\subsection{Main results}
A map $\Phi=(\phi_1, \ldots, \phi_k):U \to \mathbb{R}^k$ from an open subset $U$ of an $\RCD(K, N)$ space $(X, \dist, \meas)$ to $\mathbb{R}^k$ is said to be an \textit{isometric immersion} if it is locally Lipschitz and
\begin{equation}\label{yuyu}
\Phi^*g_{\mathbb{R}^k}:=\sum_{i=1}^k\dist \phi_i\otimes \dist \phi_i=g_X
\end{equation}
holds as $L^{\infty}_{\mathrm{loc}}$-tensors. We say that $\Phi$ is \textit{regular} if each $\phi_i$ is included in the domain of the (local) Laplacian with $\Delta \phi_i \in L^{\infty}(U, \meas)$. Thanks to regularity results proved in \cite{AMS} by Ambrosio-Mondino-Savar\'e and \cite{Jiang} by Jiang, independently, any regular map is locally Lipschitz.
Our first main result is stated as follows.
\begin{theorem}[Isometric immersion implies locally bi-Lipschitz embedding]\label{thm:bilip}
Let $(X, \dist, \meas)$ be an $\RCD(K, N)$ space with $N<\infty$ and $n=\dim_{\dist, \meas}(X)$, let $U$ be an open subset of $X$ and let $\Phi:U \to \mathbb{R}^k$ be a regular isometric immersion. Then $\Phi$ is a locally bi-Lipschitz embedding. Moreover we have the following:
\begin{enumerate}
\item For any $x \in U$ and any $\epsilon \in (0, 1)$ there exists $r \in (0, 1)$ such that $\Phi|_{B_r(x)}$ is a bi-Lipschitz embedding and that $\Phi|_{B_r(x)}$ and $(\Phi|_{B_r(x)})^{-1}$ are $(1 +\epsilon)$-Lipschitz.
\item $U$ is locally Reifenberg flat, that is, for any $x \in U$ and any $\epsilon \in (0, 1)$ there exists $r_0 \in (0, 1)$ such that 
\begin{equation}
\dist_{\mathrm{GH}}(B_r(y), B_r(0_n))<\epsilon r,\quad \forall y\in B_{r_0}(x),\,\forall r\in (0, r_0)
\end{equation}
holds, where $\dist_{\mathrm{GH}}$ denotes the Gromov-Hausdorff distance.
\end{enumerate}
In particular $U$ is homeomorphic to an $n$-dimensional topological manifold without boundary.
\end{theorem}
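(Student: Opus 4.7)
My plan is to analyze the tangent cones of $(X,\dist,\meas)$ at every point of $U$ through a blow-up of the isometric immersion $\Phi$, and to conclude by a rigidity argument on the limit that all such tangent cones are $\mathbb{R}^n$; the uniform Reifenberg flatness and the quantitative bi-Lipschitz bound then follow from standard $\RCD$-stability arguments.

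\emph{Step 1 (Upper Lipschitz estimate).} The identity $\Phi^*g_{\mathbb{R}^k}=g_X$ yields $\sum_i|\nabla\phi_i|^2=\mathrm{tr}_{g_X}g_X=n$ a.e., and, more importantly, that for any Lipschitz curve $\gamma$ in $U$ the composition $\Phi\circ\gamma$ has metric speed equal to that of $\gamma$, since $|(\Phi\circ\gamma)'|^2=\sum_i|d\phi_i(\dot\gamma)|^2=g_X(\dot\gamma,\dot\gamma)$. As $X$ is a length space, this forces $\Phi$ to be $1$-Lipschitz on any small enough ball in $U$.

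\emph{Step 2 (Blow-up).} Fix $x\in U$ and consider the rescalings $(X,r^{-1}\dist,\meas(B_r(x))^{-1}\meas,x)$ with $\phi_i^{(r)}(y):=r^{-1}(\phi_i(y)-\phi_i(x))$. The regularity assumption $\Delta\phi_i\in L^\infty$, together with the scaling of the Laplacian, gives $\|\Delta_{(r)}\phi_i^{(r)}\|_\infty\to 0$ as $r\to 0^+$. Exploiting pmGH compactness of tangent cones together with the stability of the Sobolev calculus, of the local Laplacian, and of the pull-back $(0,2)$-tensor under pmGH convergence on $\RCD$ spaces, any subsequential limit produces an $\RCD(0,N)$ metric cone $(Y,\dist_Y,\meas_Y,o)$ and limit functions $\phi_i^\infty:Y\to\mathbb{R}$ which are harmonic and satisfy $\sum_i d\phi_i^\infty\otimes d\phi_i^\infty=g_Y$.

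\emph{Step 3 (Rigidity on the cone).} The Bochner inequality on $\RCD(0,N)$ spaces yields
\begin{equation*}
\tfrac{1}{2}\Delta|\nabla\phi_i^\infty|^2\geq |\mathrm{Hess}\,\phi_i^\infty|^2\geq 0,
\end{equation*}
so $|\nabla\phi_i^\infty|^2$ is bounded (by $n$) and subharmonic on $Y$. The Liouville theorem for bounded subharmonic functions on $\RCD(0,N)$ spaces forces $|\nabla\phi_i^\infty|^2$ to be constant, hence $\mathrm{Hess}\,\phi_i^\infty\equiv 0$. The splitting theorem (in the $\RCD$-setting, due to Gigli) then produces a Euclidean factor for each non-constant $\phi_i^\infty$; iterating over a maximal linearly independent subfamily yields a splitting $Y=\mathbb{R}^m\times Y''$ under which each $\phi_i^\infty$ is an affine function of the $\mathbb{R}^m$ factor. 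The condition $\sum d\phi_i^\infty\otimes d\phi_i^\infty=g_Y$ forces the gradients of the $\phi_i^\infty$ to span $T_yY$ at a.e.\ $y$, so $Y''$ is a single point; since $\dim_{\dist,\meas}Y=n$, we conclude $Y=\mathbb{R}^n$ and that the limit map $\Phi^\infty:\mathbb{R}^n\to\mathbb{R}^k$ is an affine isometric embedding.

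\emph{Step 4 (Conclusion).} Every tangent cone of $X$ at any point of $U$ thus equals $\mathbb{R}^n$. A standard contradiction/compactness argument, relying on pmGH-stability of $\RCD$ spaces, upgrades this to the uniform Reifenberg flatness claim~(2). Rescaling $\Phi$ at $x\in U$ and passing to the limit using the affine isometric nature of $\Phi^\infty$ gives, for arbitrary $\eps\in(0,1)$ and all small enough $r$, the $(1+\eps)$-bi-Lipschitz estimate~(1) on $B_r(x)$. The topological manifold conclusion then follows from the Cheeger-Colding topological Reifenberg theorem. The main obstacle is Step~3, and within it the passage to the limit of the tensorial identity $\sum d\phi_i^{(r)}\otimes d\phi_i^{(r)}=g_{(r)}$, which requires stability of the pull-back $(0,2)$-tensor under pmGH-convergence; one also needs the $\RCD$-versions of the Liouville principle and of the splitting theorem for functions with vanishing Hessian, all available in the literature but to be combined carefully.
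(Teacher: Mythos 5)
Your overall strategy---blow-up, rigidity on the limit, then a compactness/contradiction argument to quantify---is the same as the paper's (Theorem~\ref{highregular} is the quantitative contradiction version, Theorem~\ref{yhug} the rescaled application, and Theorem~\ref{thm:bilip} the corollary). However, Step~3 contains a genuine gap. You invoke ``the Liouville theorem for bounded subharmonic functions on $\RCD(0,N)$ spaces,'' but no such theorem exists: it is false already on $\mathbb{R}^n$ for $n\ge 3$, where $u(x)=\max(-1,-|x|^{2-n})$ is a bounded, nonconstant subharmonic function. Subharmonicity plus a two-sided $L^\infty$ bound is simply not enough to force constancy on a nonparabolic space, and tangent cones of $\RCD$ spaces at regular points are $\mathbb{R}^n$ with $n$ typically $\ge 3$. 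So the inference $|\nabla\phi_i^\infty|^2 \equiv \text{const}$, and hence $\mathrm{Hess}\,\phi_i^\infty\equiv 0$, does not follow from what you have written.

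The paper sidesteps this entirely by exploiting the \emph{sum} constraint rather than trying to control each $|\nabla\phi_i^\infty|^2$ individually. From $\sum_i \dist\phi_i^\infty\otimes\dist\phi_i^\infty=g_Y$ one contracts with $g_Y$ to get $\sum_i |\nabla\phi_i^\infty|^2=\dim_{\dist,\meas}(Y)$, a genuine pointwise constant. Then applying the (localized) Bochner inequality \eqref{eq:bochner} to each $\phi_i^\infty$ against a nonnegative compactly supported test function $\phi$, summing over $i$, and using $\int_Y \Delta\phi\,\di\meas_Y=0$ yields
\begin{equation*}
0=\frac{1}{2}\int_Y\Bigl(\sum_i|\nabla\phi_i^\infty|^2\Bigr)\Delta\phi\,\di\meas_Y\ \ge\ \int_Y\phi\sum_i|\mathrm{Hess}\,\phi_i^\infty|^2\,\di\meas_Y\ \ge\ 0,
\end{equation*}
forcing $\mathrm{Hess}\,\phi_i^\infty\equiv 0$ with no Liouville-type input at all. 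You should replace your Step~3 with this argument. Two further small points: tangent cones of $\RCD$ spaces at arbitrary points need not be metric cones, so you should not assume that structure on $Y$; and the identification $\dim_{\dist,\meas}(Y)=n$ is not automatic under pmGH limits (lower semicontinuity is all one gets a priori, cf.\ Proposition~\ref{weakriem}) and must be argued, e.g.\ via the $L^2$-strong convergence of $g_{X_i}$ to $g_Y$ forced by the tensor convergence $\Phi_i^*g\to g$, as the paper does at the end of the proof of Theorem~\ref{highregular}.
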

It is worth pointing out that the regularity assumption of $\Phi$ in Theorem \ref{thm:bilip} is essential. In order to check this, let us give two simple examples.
The first one is to consider a map $\Phi:\mathbb{R} \to \mathbb{R}$ defined by $\Phi(x)=|x|$ which is globally Lipschitz with $\Phi^*g_{\mathbb{R}}=g_{\mathbb{R}}$ as $L^{\infty}$-tensors. Thus $\Phi$ is an isometric immersion. However $\Phi$ is not locally bi-Lipschitz around the origin. Note that $\Phi$ is not regular, in fact, $\Phi$ is not included in the domain of the Laplacian. Compare with \cite{ColdingNaber2} on embeddings of non-collapsed Ricci limit spaces via (truncated) distance functions.

The second one is to consider a canonical inclusion map $\iota=(x_1, \ldots, x_n)$ from the closed unit ball $\overline{B}_1(0_n):=\{x \in \mathbb{R}^n;|x|_{\mathbb{R}^n}=1\}$ to $\mathbb{R}^n$ (recall that $(\overline{B}_1(0_n), \dist_{\mathbb{R}^n}, \mathcal{H}^n)$ is a compact non-collapsed $\RCD(0, n)$ space and that it is not Reifenberg flat in the sense above because the boundary coincides with the singular set) is an isometric immersion.
However the map is not regular because the corresponding Laplacian of $(\overline{B}_1(0_n), \dist_{\overline{B}_1(0_n)}, \mathcal{H}^n)$ as an $\RCD(0, n)$ space coincides with the Neumann Laplacian. Since each coodinate function $x_i$ does not satisfy the Neumann boundary condition, $x_i$ is not included in the domain of the Laplacian. More precise description of these observations can be found in Remark \ref{remarkremark}.

Our second main result is stated as follows, which gives a generalization of Takahashi's theorem to $\RCD(K, N)$ spaces except for the minimality parts.\footnote{The author does not know a suitable definition of \textit{minimal immersions} from $\RCD(K, N)$ spaces to Riemannian manifolds.}
Recall again that thanks to regularity results obtained in \cite{AMS}, \cite{Jiang} and in \cite{JiangLiZhang} by Jiang-Li-Zhang, any eigenmap is a regular Lipschitz map.
\begin{theorem}[Isometric immersion via eigenmap]\label{mthm4}
Let $(X, \dist, \meas)$ be a compact $\RCD(K, N)$ space with $N<\infty$ and $n=\dim_{\dist, \meas}(X)$ and let $\Phi:X \to \mathbb{R}^{k+1}$ be an eigenmap. Assume that $\Phi$ is an isometric immersion.
Then the following two conditions are equivalent:
\begin{enumerate}
\item[(a)] $|\Phi|$ is a constant function.
\item[(b)] $(X, \dist, \mathcal{H}^n)$ is a non-collapsed $\RCD(K, n)$ space with 
\begin{equation}
\meas= \frac{\meas (X)}{\mathcal{H}^n(X)}\mathcal{H}^n.
\end{equation}
\end{enumerate}
Furthermore if (a) (or (b), equivalently) holds, then the following holds.
\begin{enumerate}
\item $\Phi:(X, \dist) \to (\mathbb{S}^k(|\Phi|), \dist_{\mathbb{S}^k(|\Phi|)})$ is $1$-Lipschitz.
\item $\Phi:(X, \dist) \to (\mathbb{S}^k(|\Phi|), \dist_{\mathbb{S}^k(|\Phi|)})$ is a locally bi-Lipschitz embedding map. More precisely for any $x \in X$ and any $\epsilon \in (0, 1)$ there exists $r>0$ such that $\Phi|_{B_r(x)}$ is a bi-Lipschitz embedding and that $(\Phi|_{B_r(x)})^{-1}$ is $(1+\epsilon)$-Lipschitz.
\item If $k=n$, then $\Phi$ gives an isometry from $(X, \dist)$ to $(\mathbb{S}^n(|\Phi|), \dist_{\mathbb{S}^n(|\Phi|)})$.
\end{enumerate}
\end{theorem}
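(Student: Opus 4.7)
The plan is to combine the basic identity coming from the eigenmap hypothesis with a canonical Gauss-type decomposition of $\Delta_{\meas}\Phi$ into a normal and a tangential part (relative to $\di\Phi(TX)\subset\mathbb{R}^{k+1}$), and to observe that the tangential part vanishes precisely when $\meas$ is a constant multiple of $\mathcal{H}^n$ on the $n$-regular set. Since $\Phi$ is an eigenmap, each $\phi_l$ lies in the domain of the Laplacian with $\Delta_{\meas}\phi_l=-\mu_l\phi_l\in L^\infty(X,\meas)$, so $\Phi$ is a regular isometric immersion and Theorem~\ref{thm:bilip} applies: $\Phi$ is a locally $(1+\eps)$-bi-Lipschitz embedding and $X$ is locally Reifenberg flat, hence connected and an $n$-topological manifold. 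Tracing the defining tensor identity $\sum_l\di\phi_l\otimes\di\phi_l=g_X$ against $g_X$ at an $n$-regular point gives $\sum_l|\nabla\phi_l|^2=n$ $\meas$-a.e., and the Leibniz rule yields
\[
\Delta_{\meas}\!\left(\tfrac12|\Phi|^2\right)=-\sum_{l=1}^{k+1}\mu_l\phi_l^{\,2}+n,
\]
while for any derivation $v$ on $X$,
\[
\langle\Delta_{\meas}\Phi,\di\Phi(v)\rangle_{\mathbb{R}^{k+1}}=\sum_l(-\mu_l\phi_l)\,v(\phi_l)=-\tfrac12\,v\!\Bigl(\sum_l\mu_l\phi_l^{\,2}\Bigr),
\]
which computes the tangential component of $\Delta_{\meas}\Phi$ along $\di\Phi(TX)$.

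\textbf{Key decomposition (main obstacle).} Using Gigli's Hessian calculus on RCD spaces, $\Delta_{\meas}\phi_l$ decomposes as $\tr_{g_X}\mathrm{Hess}\,\phi_l-\bb(\phi_l)$, where $\bb$ is a derivation on $X$ encoding the Radon--Nikodym density of $\meas$ with respect to $\mathcal{H}^n$ on the $n$-regular set; by the De Philippis--Gigli characterization of non-collapsed $\RCD$ spaces, $\bb\equiv 0$ if and only if $\meas$ is a constant multiple of $\mathcal{H}^n$, i.e.\ if and only if (b) holds. The almost-Euclidean identification provided by Theorem~\ref{thm:bilip} enables a Gauss-type computation showing that $(\tr_{g_X}\mathrm{Hess}\,\phi_l)_l\in\mathbb{R}^{k+1}$ is $\meas$-a.e.\ orthogonal to $\di\Phi(T_xX)$, while $(\bb(\phi_l))_l=\di\Phi(\bb)$ is tangent to $\di\Phi(T_xX)$. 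Consequently, $\Delta_{\meas}\Phi$ is $\meas$-a.e.\ normal to $\di\Phi(TX)$ if and only if $\bb\equiv 0$. Making this Gauss-type decomposition and the normality of the Hessian-trace piece rigorous in the non-smooth $\RCD$ setting is the technical heart of the proof.

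\textbf{Equivalence (a)$\Leftrightarrow$(b).} Under (a), the relation $|\Phi|^2\equiv a^2$ yields $\sum_l\phi_l\,\di\phi_l=0$, and the basic identity forces $\sum_l\mu_l\phi_l^{\,2}\equiv n$; both quantities being constant, the tangential-component formula vanishes, so $\Delta_{\meas}\Phi$ is normal to the image, and the key decomposition then gives $\bb\equiv 0$, i.e.\ (b). Conversely, under (b) the drift $\bb$ vanishes, so $\Delta_{\meas}\Phi$ is normal, and the tangential formula gives $v\bigl(\sum_l\mu_l\phi_l^{\,2}\bigr)=0$ for every derivation $v$; by connectedness of $X$, $\sum_l\mu_l\phi_l^{\,2}$ is constant, hence $\Delta_{\meas}|\Phi|^2$ is constant, and integration over the compact $X$ forces this constant to vanish. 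Thus $|\Phi|^2$ is harmonic on a compact connected $\RCD$ space, hence constant by the strong maximum principle.

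\textbf{Consequences.} Item (1) follows from $\Phi^*g_{\mathbb{S}^k(|\Phi|)}=g_X$ by the standard curve-length argument: every Lipschitz curve in $X$ is sent to one of the same length in $\mathbb{S}^k(|\Phi|)$. For (2), the local $(1+\eps)$-bi-Lipschitz property into $\mathbb{R}^{k+1}$ given by Theorem~\ref{thm:bilip} transfers to the sphere because, for points on $\mathbb{S}^k(|\Phi|)$ at small extrinsic distance, the ratio of intrinsic sphere-distance to ambient Euclidean distance tends to $1$. For (3), when $k=n$ the map $\Phi$ is a local Riemannian isometry between compact connected Riemannian $n$-manifolds and is surjective by invariance of domain and compactness, hence a Riemannian covering; the simple connectivity of $\mathbb{S}^n$ (for $n\ge 2$) upgrades this to a bijective isometry, while $n=1$ reduces to a direct computation on a circle.
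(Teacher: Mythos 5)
Your argument for the equivalence (a) $\Leftrightarrow$ (b) rests entirely on what you yourself call ``the technical heart of the proof'': the Gauss-type decomposition $\Delta_\meas\phi_l=\tr_{g_X}\mathrm{Hess}_{\phi_l}-\bb(\phi_l)$ with a drift $\bb$ vanishing iff $\meas$ is a multiple of $\mathcal{H}^n$, together with the claim that $(\tr_{g_X}\mathrm{Hess}_{\phi_l})_l$ is $\meas$-a.e.\ normal to $\di\Phi(T_xX)$. Neither assertion is established in your write-up; you explicitly defer both, but these are precisely the statements that carry the entire implication, and making them rigorous on a general (not a priori non-collapsed) $\RCD(K,N)$ space is far from automatic --- on such a space there is no canonical density $\meas=\rho\,\mathcal{H}^n$ at the outset, nor a second fundamental form. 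The paper avoids exactly this difficulty: it proves the identity $\nabla^*\!\left(\Phi^*g_{\mathbb{R}^{k+1}}\right)=-\tfrac14\,\dist\Delta|\Phi|^2$ (Theorem~\ref{cor:1}, which is a direct computation from the definition of the adjoint $\nabla^*$ and Proposition~\ref{thm:3}), substitutes the isometric immersion hypothesis to get $\nabla^*g_X=-\tfrac14\,\dist\Delta|\Phi|^2$, and then invokes Theorem~\ref{thm:0}, which characterizes $\nabla^*g_X=0$ in terms of condition~(b) via the Brue--Semola/Gigli identity $\Delta f=\tr(\mathrm{Hess}_f)$ forcing $\dim_{\dist,\meas}(X)=N$, together with Honda's compact case of the De~Philippis--Gigli conjecture (Theorem~\ref{wnon}) and Han's converse (Theorem~\ref{thm:hanresult}). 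So the paper's machinery circumvents any direct normal/tangential splitting and works entirely with known structural theorems. Your intuition matches the smooth picture (mean curvature normal, Bakry--\'Emery drift tangential), but as it stands the argument has a genuine hole rather than being a complete alternative.

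Two secondary remarks. For (1), you appeal to a curve-length argument from $\Phi^*g_{\mathbb{S}^k}=g_X$; on an $\RCD$ space this needs justification (one must relate the pull-back tensor along Lipschitz curves to metric speeds), and the paper instead derives the $1$-Lipschitz property from the already-established local $(1+\eps)$-bi-Lipschitz property and the upper-gradient/geodesic structure (Lemmas~\ref{hoho} and~\ref{uuuuu}), i.e.\ (2) is proved before (1). For (3), your route --- local isometry, hence Riemannian covering by properness, then simple connectivity of $\mathbb{S}^n$ for $n\ge2$ --- is genuinely different from the paper's, which uses the eigenvalue bound $\min_i\lambda_i\le n$ (Lemma~\ref{lemelemen}) together with Obata's theorem. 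Your covering-space argument is arguably cleaner for $n\ge 2$ and worth noting as an alternative; for $n=1$, however, ``a direct computation on a circle'' does not close the case (the circle of circumference $2\pi m$ with $\Phi(t)=(\cos t,\sin t)$ is an eigenmap and isometric immersion onto $\mathbb{S}^1$ with $|\Phi|\equiv1$ but is an $m$-fold cover, not an injection), so one cannot dismiss it by fiat.
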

Note that it is a direct consequence of Theorem \ref{thm:bilip} that $k \ge n$ holds in Theorem \ref{mthm4} because Theorem \ref{thm:bilip} yields that $X$ is homeomorphic to an $n$-dimensional closed topological manifold, and there is no local homeomorphism from an $n$-dimensional closed topological manifold $M^n$ into $\mathbb{R}^l$ for $l \le n$. It is worth pointing out that this observation leads us to get gap theorems (Theorems \ref{mthm5} and \ref{mthm5900000}).

Let us introduce applications of Theorem \ref{mthm4}. Before that, let us define a non-negative number $L(\Phi) \in [0, \infty)$ of an eigenmap $\Phi=(\phi_1, \ldots, \phi_{k+1})$ from a compact $\RCD(K, N)$ space $(X, \dist, \meas)$ to $\mathbb{R}^{k+1}$ by 
\begin{equation}
L(\Phi):=\min_{1 \le i \le k+1}\frac{1}{\meas (X)}\int_X|\phi_i|^2\di \meas.
\end{equation}

We will also establish convergence/compactness results for eigenmaps with respect to the measured Gromov-Hausdorff convergence (Proposition \ref{ssx} and Corollary \ref{ssx2}).
Since compactness results for sequences of $\RCD(K, N)$ spaces are already known (Theorems \ref{wnon} and \ref{hohohonoho}), combining these compactness results for eigenmaps and for sequences of $\RCD(K, N)$ spaces with (5) of Theorem \ref{mthm4} gives us the following result which is new even for closed Riemannian manifolds.
\begin{theorem}[Almost characterization of sphere via eigenmap]\label{mthm2}
We have the following:
\begin{enumerate}
\item For any $K \in \mathbb{R}$, any $n \in \mathbb{N}$ and all $\epsilon, \tau, d \in (0, \infty)$ there exists $\delta:=\delta(n, K, \epsilon, \tau, d) \in (0, 1)$ such that if a compact non-collapsed $\RCD(K, n)$ space $(X, \dist, \mathcal{H}^n)$ satisfies $\mathrm{diam} (X, \dist)\le d$ and
\begin{equation}
\frac{1}{\mathcal{H}^n(X)}\int_X|\Phi^*g_{\mathbb{R}^{n+1}}-g_X|\di \mathcal{H}^n<\delta
\end{equation}
for some irreducible $(n+1)$-dimensional eigenmap $\Phi:X \to \mathbb{R}^{n+1}$ with $L (\Phi) \ge \tau$, then $\dist_{\mathrm{GH}}(X, \mathbb{S}^n(a))<\epsilon$ holds, where 
\begin{equation}
a^2=\frac{1}{\mathcal{H}^n(X)}\int_X|\Phi |_{\mathbb{R}^{n+1}}^2\di \mathcal{H}^n.
\end{equation}
\item For any $K \in \mathbb{R}$, any $n \in \mathbb{N}$ and all $\epsilon, d \in (0, \infty)$ there exists $\delta:=\delta(n, K, \epsilon, d) \in (0, 1)$ such that if a compact non-collapsed $\RCD(K, n)$ space $(X, \dist, \mathcal{H}^n)$ satisfies $\mathrm{diam} (X, \dist)\le d$ and $\dist_{\mathrm{GH}}(X, \mathbb{S}^n(a))<\delta$ for some $a \in [\tau, d/\pi]$, then there exists an irreducible $(n+1)$-dimensional eigenmap $\Phi:X \to \mathbb{R}^{n+1}$ with $|L (\Phi) - a^2(n+1)^{-1}| <\epsilon$ such that 
\begin{equation}
\frac{1}{\mathcal{H}^n(X)}\int_X|\Phi^*g_{\mathbb{R}^{n+1}}-g_X|\di \mathcal{H}^n<\epsilon
\end{equation}
holds.
\end{enumerate}
\end{theorem}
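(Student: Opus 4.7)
I plan to prove both statements via a compactness-contradiction scheme. The three ingredients are: (i) the mGH-compactness of compact non-collapsed $\RCD(K,n)$ spaces of uniformly bounded diameter (Theorem \ref{hohohonoho}), which preserves non-collapsedness in the limit; (ii) the convergence/compactness results for eigenmaps along mGH-convergence (Proposition \ref{ssx} and Corollary \ref{ssx2}); and (iii) the rigidity statement in Theorem \ref{mthm4}, whose isometry conclusion in the case $k=n$ forces any compact non-collapsed $\RCD(K,n)$ space admitting an $(n+1)$-dimensional isometric eigenmap to be isometric to a round $n$-sphere.

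\textbf{Proof of (1).} Arguing by contradiction, I would assume a sequence $(X_i,\dist_i,\mathcal{H}^n)$ of compact non-collapsed $\RCD(K,n)$ spaces with $\diam(X_i,\dist_i)\le d$ and irreducible $(n+1)$-dimensional eigenmaps $\Phi_i:X_i\to\mathbb{R}^{n+1}$ with $L(\Phi_i)\ge\tau$ and
\begin{equation*}
\frac{1}{\mathcal{H}^n(X_i)}\int_{X_i}\bigl|\Phi_i^*g_{\mathbb{R}^{n+1}}-g_{X_i}\bigr|\,\di\mathcal{H}^n\longrightarrow 0,
\end{equation*}
yet $\dist_{\mathrm{GH}}(X_i,\mathbb{S}^n(a_i))\ge\epsilon$. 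Theorem \ref{hohohonoho} produces a subsequential mGH-limit $(X_\infty,\dist_\infty,\mathcal{H}^n)$, again a compact non-collapsed $\RCD(K,n)$ space. The bound $L(\Phi_i)\ge\tau$ pins each component's $L^2$-norm away from zero, while the diameter bound together with standard spectral estimates on non-collapsed $\RCD(K,n)$ spaces gives a uniform ceiling on the eigenvalues of the components; Proposition \ref{ssx}/Corollary \ref{ssx2} then extract a limit eigenmap $\Phi_\infty:X_\infty\to\mathbb{R}^{n+1}$ with strong $H^{1,2}$-convergence of components. Passing the $L^1$-tensor closeness to the limit yields $\Phi_\infty^*g_{\mathbb{R}^{n+1}}=g_{X_\infty}$, so $\Phi_\infty$ is a regular isometric eigenmap on $X_\infty$. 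Since $\meas=\mathcal{H}^n$ on $X_\infty$, condition (b) of Theorem \ref{mthm4} holds automatically; hence $|\Phi_\infty|\equiv a_\infty$ for some constant $a_\infty$, and the isometry statement of Theorem \ref{mthm4} (with $k=n$) upgrades $\Phi_\infty$ to an isometry onto $\mathbb{S}^n(a_\infty)$. The $L^2$-convergence of $|\Phi_i|^2$ gives $a_i^2\to a_\infty^2$, and the mGH-triangle inequality then yields $\dist_{\mathrm{GH}}(X_i,\mathbb{S}^n(a_i))\to 0$, a contradiction.

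\textbf{Proof of (2) and main obstacle.} Again by contradiction: take compact non-collapsed $\RCD(K,n)$ spaces $X_i$ with $\diam\le d$ and $\dist_{\mathrm{GH}}(X_i,\mathbb{S}^n(a_i))\to 0$ for some $a_i\in[\tau,d/\pi]$ (so $a_i\to a_\infty\in[\tau,d/\pi]$ after extraction), for which no eigenmap with the required properties exists. Then $X_i\to\mathbb{S}^n(a_\infty)$ in mGH. On $\mathbb{S}^n(a_\infty)$ the canonical inclusion $\iota=(x_1,\dots,x_{n+1})$ is an irreducible $(n+1)$-dimensional eigenmap for the eigenvalue $n/a_\infty^2$, with $\iota^*g_{\mathbb{R}^{n+1}}=g_{\mathbb{S}^n(a_\infty)}$ and $L(\iota)=a_\infty^2/(n+1)$. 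The lifting direction of Proposition \ref{ssx}/Corollary \ref{ssx2} then produces a sequence $\Phi_i$ of $(n+1)$-dimensional eigenmaps on $X_i$ with $\Phi_i\to\iota$ strongly in $H^{1,2}$ along the mGH-convergence, hence $L(\Phi_i)\to a_\infty^2/(n+1)$ and the averaged $L^1$-tensor distance $\|\Phi_i^*g_{\mathbb{R}^{n+1}}-g_{X_i}\|_{L^1}/\mathcal{H}^n(X_i)\to 0$, contradicting nonexistence. The main technical obstacle is the spectral matching underlying this lifting: the first nontrivial eigenvalue on $\mathbb{S}^n(a_\infty)$ has multiplicity exactly $n+1$, and under mGH-convergence the corresponding spectrum on $X_i$ may split into $n+1$ nearby eigenvalues, so the ``first $n+1$'' eigenfunctions on $X_i$ must be selected (for instance by a Gram-Schmidt procedure against the limit coordinates via the mGH-identification) to secure the strong $H^{1,2}$-convergence to $\iota$ that drives the passage to the limit. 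Once this spectral matching is set up, the rest of the argument is a routine application of Proposition \ref{ssx} and Corollary \ref{ssx2}.
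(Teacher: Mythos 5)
Your proposal is correct and follows essentially the same contradiction-plus-compactness scheme as the paper: extract an mGH limit, pass the eigenmap and the $L^1$-tensor estimate to the limit, invoke the rigidity of Theorem \ref{mthm4} to identify the limit as a round sphere, and for part (2) go the other direction by approximating the canonical inclusion of the sphere. A few attribution and bookkeeping points are worth fixing. First, the non-collapsedness of the limit $(X_\infty,\dist_\infty,\mathcal{H}^n)$ is not an immediate output of the compactness Theorem \ref{hohohonoho}; in the paper this is derived in Theorem \ref{qqw} from the $L^2$-strong convergence of the Riemannian metrics (Proposition \ref{weakriem}) plus Theorem \ref{wnon}, which forces $\dim_{\dist,\meas}(X_\infty)=n$, and it is precisely the hypothesis that the $L^1$-tensor error tends to zero that makes this work. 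Second, for part (2) the passage from $\dist_{\mathrm{GH}}(X_i,\mathbb{S}^n(a_i))\to 0$ to mGH-convergence needs Theorem \ref{GHmGH}, which you use silently. Third, the ``lifting direction'' you invoke is not contained in Proposition \ref{ssx} or Corollary \ref{ssx2} (those are compactness statements that extract limits of given sequences); the correct reference is Proposition \ref{prop:appl}, which constructs approximating eigenmaps on $X_i$ from an eigenmap on the limit. Finally, the spectral-matching issue you flag as the main obstacle is already handled by Theorem \ref{thm:spectral}: that statement provides, for each limit eigenfunction $\phi_j$, a sequence $\phi_{j,i}$ of eigenfunctions on $X_i$ with eigenvalues $\lambda_j(X_i)\to\lambda_j(X)$ converging $H^{1,2}$-strongly to $\phi_j$, even when the limit eigenvalue has multiplicity; your proposed Gram-Schmidt step would essentially re-prove a special case of it. With these citations corrected your argument matches the paper's.
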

As a corollary of Theorem \ref{mthm2}, we obtain the following topological sphere theorem, which is also new even for closed Riemannian manifolds. 
\begin{theorem}[Topological sphere theorem via eigenmap]\label{thm:sphere}
For any $K \in \mathbb{R}$, any $n \in \mathbb{N}$ and all $d, \tau \in (0, \infty)$, there exists $\delta:=\delta(n, K, d, \tau) \in (0, 1)$ such that if a compact non-collapsed $\RCD(K, n)$ space $(X, \dist, \mathcal{H}^n)$ with $\mathrm{diam}(X, \dist) \le d$ satisfies that there exists an irreducible $(n+1)$-dimensional eigenmap $\Phi:X \to \mathbb{R}^{n+1}$ with $L(\Phi) \ge \tau$ such that 
\begin{equation}
\frac{1}{\mathcal{H}^n(X)}\int_X\left| \Phi^*g_{\mathbb{R}^{n+1}}-g_X\right|\di \mathcal{H}^n <\delta
\end{equation}
holds, then $X$ is homeomorphic to $\mathbb{S}^n:=\mathbb{S}^1(1)$. Moreover if $(X, \dist)$ is isometric to a Riemannian manifold, then the homeomorphism can be improved to a diffeomorphism.
\end{theorem}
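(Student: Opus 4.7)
The plan is to argue by contradiction and reduce, via Theorem \ref{mthm2}(1), to a topological stability statement near a round sphere. Assume the theorem fails for some fixed $K, n, d, \tau$. Then one extracts a sequence of compact non-collapsed $\RCD(K, n)$ spaces $(X_i, \dist_i, \mathcal{H}^n)$ with $\diam(X_i, \dist_i) \le d$, together with irreducible $(n+1)$-dimensional eigenmaps $\Phi_i : X_i \to \mathbb{R}^{n+1}$ satisfying $L(\Phi_i) \ge \tau$ and
$$
\frac{1}{\mathcal{H}^n(X_i)}\int_{X_i}|\Phi_i^* g_{\mathbb{R}^{n+1}} - g_{X_i}|\di\mathcal{H}^n \longrightarrow 0,
$$
while no $X_i$ is homeomorphic to $\mathbb{S}^n$. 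Feeding these data into Theorem \ref{mthm2}(1) with $\epsilon_i \to 0$ yields radii $a_i > 0$ with $\dist_{\mathrm{GH}}(X_i, \mathbb{S}^n(a_i)) \to 0$, where $a_i^2 = \mathcal{H}^n(X_i)^{-1}\int_{X_i}|\Phi_i|_{\mathbb{R}^{n+1}}^2 \di\mathcal{H}^n$.

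Next I would establish uniform two-sided bounds on $a_i$. Summing the defining inequality of $L(\Phi_i)$ over the $n+1$ coordinates gives $a_i^2 \ge (n+1)\tau$, while comparing the diameters of $X_i$ and $\mathbb{S}^n(a_i)$ under small Gromov-Hausdorff distance forces $\pi a_i \le d + o(1)$. Passing to a subsequence, $a_i \to a \in [\sqrt{(n+1)\tau}, d/\pi]$, so $X_i$ converges in Gromov-Hausdorff distance to the smooth round sphere $\mathbb{S}^n(a)$ of positive radius.

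The final step is to upgrade Gromov-Hausdorff closeness to a topological (resp.\ differentiable) identification. Since $\mathbb{S}^n(a)$ is Reifenberg flat at every scale, the topological stability theorem for non-collapsed $\RCD$ sequences converging to a smooth closed manifold, in the lineage of Cheeger-Colding and extended to the $\RCD$ setting via works of Kapovitch-Mondino and the non-collapsed theory of De Philippis-Gigli, provides a (bi-H\"older) homeomorphism $X_i \cong \mathbb{S}^n(a)$ for all $i$ large enough, contradicting the standing assumption. When each $(X_i, \dist_i)$ is isometric to a Riemannian manifold, the Cheeger-Colding (or Perelman-type) differentiable stability theorem upgrades the conclusion to $X_i$ diffeomorphic to $\mathbb{S}^n(a)$, which proves the second statement. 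The main obstacle is the clean invocation of this stability theorem in the $\RCD$ category: the reduction to it is effortless given Theorem \ref{mthm2}(1), but one must carefully reference (or reprove) the precise topological/differentiable stability statement for non-collapsed $\RCD(K,n)$ spaces converging to a smooth closed manifold.
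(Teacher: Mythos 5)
Your proposal follows the same overall strategy as the paper: argue by contradiction, use the $L^1$-closeness of the pullback metric to force Gromov--Hausdorff convergence to a round sphere of positive radius, and then conclude by topological stability. The main organizational difference is that you route through Theorem~\ref{mthm2}(1), which produces a sequence of comparison radii $a_i$ that you must then bound uniformly (your argument for $a_i\in[\sqrt{(n+1)\tau},\,d/\pi]$ is correct), whereas the paper instead invokes Theorem~\ref{qqw} together with Corollary~\ref{ssx2}, which directly packages mGH-compactness for the pairs $(X_i,\Phi_i)$, identifies the limit $(X,\Phi)$ with $\Phi^*g_{\mathbb{R}^{n+1}}=g_X$ (hence $X\cong\mathbb{S}^n(a)$ by Theorem~\ref{mthm4}(3)), and already delivers the homeomorphism $X_i\cong X$ for large $i$. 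Your stated ``main obstacle'' --- the need to carefully reference or reprove topological (and differentiable) stability for non-collapsed $\RCD(K,n)$ sequences converging to a smooth closed manifold --- is in fact settled by the paper's Theorem~\ref{qqw}(2): its proof establishes uniform Reifenberg flatness of the approximating sequence (via GH$\Rightarrow$mGH for non-collapsed spaces, Theorem~\ref{GHmGH}, and the almost rigidity of the Bishop inequality, Theorem~\ref{thm:bishop2}) and then applies the intrinsic Reifenberg theorem of Cheeger--Colding~\cite[Thm.A.1.2, A.1.3]{CheegerColding1}; the latter also yields the diffeomorphism when each $(X_i,\dist_i)$ is Riemannian. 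So if you simply cite Theorem~\ref{qqw} in place of a yet-to-be-justified external stability theorem, your argument closes and agrees essentially with the paper's.
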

Let us emphasize that in Theorems \ref{mthm2} and \ref{thm:sphere} we do not necessary assume a positive lower bound on Ricci curvature and a positive lower bound on the volume $\mathcal{H}^n(X)$, which are different from previous sphere theorems, for instance \cite{Colding1, Colding2} by Colding, \cite{KM} by Kapovitch-Mondino, \cite{HondaMondello} by Mondello and the author, and \cite{Petersen} by Petersen.

By (3) of Theorem \ref{mthm4} we know that the sphere is the only non-collapsed $\RCD(K, n)$ space whose Riemannian metric can be realized by the pull-back metric of an $(n+1)$-dimensional eigenmap. Therefore it is natural to ask:
\begin{itemize}
\item[$(\diamondsuit)$]For fixed $k$ how many non-collapsed $\RCD(K, n)$ spaces $(X, \dist, \mathcal{H}^n)$ satisfy (\ref{yuyu}) for an eigenmap $\Phi$?
\end{itemize}
In order to give an answer to this question, we will provide two topological finiteness theorems in more general setting. Roughly speaking, they tell us that from the point of view of topology, only finite spaces realize this condition. See Theorems \ref{topological1} and \ref{topological2}. 

Let us explain how to acheive these results in the next subsection.
\subsection{Outline of the proofs}
Let us start by a simple observation.
A (globally) Lipschitz map $\Phi:=(\phi_1,\ldots, \phi_k):\mathbb{R}^n \to \mathbb{R}^k$ is an isometric embedding as metric spaces (that is, $\Phi$ preserves the distance) if and only if each $\phi_i$ is a harmonic function on $\mathbb{R}^n$ with
\begin{equation}
\Phi^*g_{\mathbb{R}^k}=g_{\mathbb{R}^n}.
\end{equation}
The harmonicity of $\phi_i$ is essential in this observation because, for example, as already observed, the map $\Phi:\mathbb{R} \to \mathbb{R}$ defined by $\Phi(x) =|x|$ is globally Lipschitz and satisfies $\Phi^*g_{\mathbb{R}}=g_{\mathbb{R}}$ as $L^{\infty}$-tensors, however it is not an isometric embedding as metric spaces. 

The first step to prove Theorem \ref{thm:bilip} is to establish a quantitative version of this observation, that is, roughly speaking, 
if $\Phi^*g_{\mathbb{R}^k}$ is close to $g_X$ around a point $x$ of an $\RCD(K, N)$ space $(X, \dist, \meas)$, then $\Phi$ gives a Gromov-Hausdorff approximation to the image on a neighbourhood of $x$, and the point $x$ is almost $n$-dimensional regular, where $n=\dim_{\dist, \meas}(X)$ (Theorem \ref{highregular}). This is done by a blow-up argument with (\ref{yuyu}) and stability results of Sobolev functions with respect to the measured Gromov-Hausdorff convergence established in \cite{AmbrosioHonda, AmbrosioHonda2} by Ambrosio and the author. Let us emphasize that the regularity assumption of $\Phi$ is essentially used here.
The second step is to prove, by using the first step, that a regular almost isometric immersion implies a local bi-Lipschitz embedding and an almost Reifenberg flatness of the space (Theorem \ref{yhug}), which leads us to get Theorem \ref{thm:bilip}. 
 
For Theorem \ref{mthm4}, we will split the proof into several lemmas in Section \ref{secproof}.
In order to show the equivalence between (a) and (b) in Theorem \ref{mthm4}, we prove the following formula for any eigenmap $\Phi:X\to \mathbb{R}^{k+1}$ (Theorem \ref{cor:1}):
\begin{equation}\label{amann}
\nabla^*\left( \Phi^*g_{\mathbb{R}^{k+1}}\right)=-\frac{1}{4}\dist \Delta |\Phi|^2_{\mathbb{R}^{k+1}}.
\end{equation}
By using (\ref{amann}), we know that under assuming (\ref{yuyu}), $|\Phi|$ is a constant function if and only if $\nabla^*g_X=0$ holds. Moreover thanks to a development on the second order differential calculus in \cite{Gigli} by Gigli, $\nabla^*g_X=0$ holds if and only if 
\begin{equation}\label{nng}
\Delta f=\mathrm{tr}(\mathrm{Hess}_f),\quad \forall f \in D(\Delta)
\end{equation}
holds. 

Let us assume that (a) holds. Thus we have (\ref{nng}).
Then we can apply a result proved in \cite{BrueSemola}, which confirmed a conjecture raised in \cite{DePhillippisGigli}, to show that $(X, \dist, \meas)$ is a weakly non-collapsed $\RCD(K, n)$ space. 
Since any compact weakly non-collapsed $\RCD(K, n)$ space is actually a non-collapsed $\RCD(K, n)$ space up to scaling the measure by a positive constant, which is proved in \cite{Honda19} by the author, we have (b).
The converse implication, from (b) to (a), is justified along the same line with a result proved in \cite{Han} by Han (Lemma \ref{houuy}).

Under assuming (a), Theorem \ref{thm:bilip} allows us to prove (1) and (2) of Theorem \ref{mthm4} (Lemmas \ref{hoho} and \ref{uuuuu}). Moreover we know that if $k=n$, then $\Phi$ is a local isometry (Lemma \ref{esse}). In particular $(X, \dist)$ is isometric to a closed Riemannian manifold whose sectional curvature is equal to $1$. Since we can check that the first positive eigenvalue $\lambda_1$ of $(X, \dist, \meas)$ is at most $n/|\Phi|^2$, the final statement, (3) of Theorem \ref{mthm4}, follows from Obata's theorem \cite{Obata} which states that if a closed $n$-dimensional Riemannian manifold $(M^n, g)$ satisfies $\mathrm{Ric}_{M^n}^g \ge n-1$ and $\lambda_1\le n$, then $(M^n, g)$ is isometric to $(\mathbb{S}^n, g_{\mathbb{S}^n})$ (Lemma \ref{complete}).  

The remaining results, Theorems \ref{mthm2} and \ref{thm:sphere}, are justified by contradiction after combining Theorem \ref{mthm4} with compactness results for eigenmaps established in Section \ref{sseewwe}. Roughly speaking, we will prove the following in Section \ref{sseewwe}:
\begin{itemize}
\item The set of all compact $\RCD(K, N)$ spaces $(X, \dist, \meas, \Phi)$ with irreducible eigenmaps $\Phi:X \to \mathbb{R}^{k}$ satisfying that $\mathrm{diam}(X, \dist)\le d$ and $L(\Phi)\ge \tau$, is compact with respect to the joint convergences of measured Gromov-Hausdorff convergence of $(X, \dist, \meas)$ and of irreducible eigenmaps $\Phi$. 
\end{itemize}
See Theorem \ref{ssx} and Corollary \ref{ssx2}.

\subsection{Organization}
In the next section we will introduce knowledges on the $\RCD$ theory. In Section \ref{immersion} we will discuss isometric immersions of $\RCD(K, N)$ spaces, in particular, we will prove Theorem \ref{thm:bilip}. We will study eigenmaps on $\RCD(K, N)$ spaces in Section \ref{sseewwe}. Section \ref{secproof} is devoted to the proof of Theorem \ref{mthm4}. Finally, we will prove remaining statements introduced above in Sections \ref{6666r6r6r6} and \ref{finiteness}.

\smallskip\noindent
\textbf{Acknowledgement.}
The author would like to thank Shin Nayatani and Toshihiro Shoda for valuable suggestions. He is grateful to the referee for reading the paper carefully and for giving many valuable suggestions.
He acknowledges the supports of the Grant-in-Aid for Scientific Research (B) of 20H01799 and the Grant-in-Aid for Scientific Research (B) of 18H01118.
\section{Preliminaries}
In this section we give a quick introduction to the $\RCD$ theory in order to understand the paper under assuming a bit of knowledges of metric measure geometry. Therefore we will sometimes refer only to suitable references for the details. For example we omit the definitions of \textit{Gromov-Hausdorff (GH) convergence}, \textit{measured Gromov-Hausdorff (mGH) convergence}, and \textit{pointed measured Gromov-Hausdorff (pmGH) convergence} which are metrizable by $\dist_{\mathrm{GH}}, \dist_{\mathrm{mGH}}$, and $\dist_{\mathrm{pmGH}}$\footnote{More precisely, we should use $\mathbf{D}, p\mathbb{G}_w$ instead of using $\dist_{\mathrm{mGH}}, \dist_{\mathrm{pmGH}}$ due to \cite{GigliMondinoSavare13, Sturm06a}. However in order to keep our notation simply we will use these notations.}, respectively.  See \cite{BBI, GigliMondinoSavare13, LottVillani, Sturm06a, Sturm06b, Villani}.
\subsection{$\RCD$ space}\label{rcddef1}
We refer to \cite{A, AmbrosioGigliMondinoRajala, AmbrosioGigliSavare13, AmbrosioGigliSavare14, AmbrosioMondinoSavare, CM, ErbarKuwadaSturm, LottVillani, Sturm06a, Sturm06b} as references in this subsection.

We recall the definition of metric measure spaces again: a triple $(X, \dist, \meas)$ is said to be a \textit{metric measure space} if $(X, \dist)$ is a complete separable metric space and $\meas$ is a Borel measure on $X$ with full support. We fix a metric measure space $(X, \dist, \meas)$ below.

Define the \textit{Cheeger energy} $\Ch:L^2(X,\meas)\to [0,\infty]$ by
\begin{equation}\label{eq:defchee}
\Ch(f):=\inf\left\{\liminf_{i\to\infty}\int_X{\rm lip}^2f_i\di\meas:\ f_i\in\Lip (X,\dist)\cap (L^2 \cap L^{\infty})(X, \meas),\,\,\,\|f_i-f\|_{L^2}\to 0
\right\},
\end{equation}
where
${\rm lip}f(x)$
denotes the local slope of $f$ at $x$, that is, 
\begin{equation}
{\rm lip}f(x):=\lim_{r\to 0^+}\sup_{y \in B_r(x) \setminus \{x\}}\frac{|f(x)-f(y)|}{\dist (x, y)}
\end{equation}
if $x$ is not isolated, ${\rm lip}f(x):=0$ otherwise,  and $\Lip (X,\dist)$ is the set of all Lipschitz functions on $X$.
Then the \textit{Sobolev space} $H^{1, 2}=H^{1,2}(X,\dist,\meas)$ is defined as the finiteness domain of $\Ch$ in $L^2(X, \meas)$ and it is a Banach space equipped with the norm $\|f\|_{H^{1, 2}}=\sqrt{\|f\|_{L^2}^2+\Ch(f)}$.
We are now in a position to introduce the definition of $\RCD(K, N)$ space. More precisely the following definition should be refered as \textit{$\RCD^*(K, N)$ spaces} treated in \cite{AmbrosioMondinoSavare, ErbarKuwadaSturm}, however the equivalence between $\RCD(K, N)$ and $\RCD^*(K, N)$ spaces is established in \cite{CM} under the finite measure assumption. Since we will work mainly for compact spaces, we use this simplified notation.
\begin{definition}[$\RCD(K, N)$ space]
$(X, \dist, \meas)$ is said to be an $\RCD(K, N)$ \textit{space} for some $K \in \mathbb{R}$ and some $N \in [1, \infty]$ if the following four conditions hold:
\begin{itemize}
\item{(Volume growth condition)} There exist $C\in (0, \infty)$ and $x \in X$ such that $\meas (B_r(x)) \le Ce^{Cr^2}$ holds for any $r \in (0, \infty)$.
\item{(Infinitesimally Hilbertian condition)} $H^{1, 2}$ is a Hilbert space. In particular for all $f_i \in H^{1, 2} (i=1, 2)$,
\begin{equation}
\langle \nabla f_1, \nabla f_2\rangle:=\lim_{t \to 0}\frac{|\nabla (f_1+tf_2)|^2-|\nabla f_1|^2}{2t} \in L^1(X, \meas)
\end{equation}
is well-defined, where $|\nabla f_i|$ denotes the minimal relaxed slope of $f_i$.
\item{(Sobolev-to-Lipschitz property)} Any function $f \in H^{1, 2}$ satisfying $|\nabla f|(y) \le 1$ for $\meas$-a.e. $y \in X$ has a $1$-Lipschitz representative.  
\item{(Bochner inequality)} For any $f \in D(\Delta)$ with $\Delta f \in H^{1, 2}$, we have
\begin{equation}
\frac{1}{2}\int_X\Delta \phi |\nabla f|^2\di \meas \ge \int_X \phi \left(\frac{(\Delta f)^2}{N}+\langle \nabla \Delta f, \nabla f\rangle +K|\nabla f|^2\right)\di \meas
\end{equation}
for any $\phi \in D(\Delta) \cap L^{\infty}(X, \meas)$ with $\Delta \phi \in L^{\infty}(X, \meas)$ and $\phi \ge 0$, where
\begin{equation}
D(\Delta):=\left\{ f \in H^{1, 2}; \exists h=:\Delta f \in L^2, \,\mathrm{s.t.} \, \int_X\langle \nabla f, \nabla \psi \rangle \di \meas =-\int_Xh\psi \di \meas,\,\forall \psi \in H^{1, 2}\right\}.
\end{equation}
\end{itemize}
\end{definition}
Throughout the paper the parameters $K \in \mathbb{R}$ and $N \in [1, \infty)$ will be kept fixed. 
\subsection{Structure of $\RCD(K, N)$ space}
Let $(X, \dist, \meas)$ be an $\RCD(K, N)$ space with $\mathrm{diam}(X, \dist)>0$. It is known that $(X, \dist)$ is a proper geodesic space (see \cite[Cor.1.4]{GRS} for more information on geodesics).
\begin{definition}[Regular set $\mathcal{R}_k$]
For any $k \geq 1$, we denote by $\mathcal{R}_k$ the \textit{$k$-dimensional regular set}  of $(X, \dist, \meas)$, 
namely, the set of all points $x \in X$ such that $(X, r^{-1}\dist, \meas (B_{r}(x))^{-1}\meas, x)$ pmGH converge to $(\mathbb{R}^k, \dist_{\mathbb{R}^k}, \omega_k^{-1}\mathcal{H}^k,0_k)$ as $r \to 0^+$, where $\omega_k:=\mathcal{H}^k(B_1(0_k))$.
\end{definition}
The following result is proved in \cite[Thm.0.1]{BrueSemola} after \cite{MondinoNaber} which gives a generalization of \cite[Thm.1.12]{ColdingNaber} to $\RCD(K, N)$ spaces.
\begin{theorem}[Essential dimension of $\RCD (K,N)$ spaces]\label{thmRCD decomposition}
Let $(X,\dist,\meas)$ be an $\RCD (K,N)$ space. Then, there exists a unique integer $n\in [1,N] \cap \mathbb{N}$, denoted by $\dim_{\dist,\meas}(X)$, such that
 \begin{equation}\label{eq:regular set is full}
\meas(X\setminus \mathcal{R}_n\bigr)=0
\end{equation}
holds.
\end{theorem}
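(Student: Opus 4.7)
The plan is to follow the strategy of Bru\`e-Semola: first use the decomposition of Mondino-Naber to reduce the question to analyzing an integer-valued \emph{essential dimension} taking values in $\{1,\dots,\lfloor N\rfloor\}$, and then rule out simultaneous positivity of two distinct regular sets by a flow argument.

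First I would invoke the Mondino-Naber structure theorem, which asserts that $\bigcup_{k=1}^{\lfloor N\rfloor}\mathcal{R}_k$ has full $\meas$-measure and that each $\mathcal{R}_k$ is $k$-rectifiable with respect to $\dist$. Existence of at least one $k\in[1,N]\cap\mathbb{N}$ with $\meas(\mathcal{R}_k)>0$ is then immediate; only uniqueness is substantive.

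For uniqueness I would argue by contradiction. Suppose $k_1<k_2$ in $\{1,\dots,\lfloor N\rfloor\}$ both satisfy $\meas(\mathcal{R}_{k_i})>0$, and pick $\meas$-density points $x_i\in\mathcal{R}_{k_i}$. The heart of the argument is to consider a Sobolev vector field $\bb$ with bounded symmetric derivative and bounded divergence, supported in a region meeting both density sets, and to analyze its regular Lagrangian flow $F_t$ in the sense of Ambrosio-Trevisan. On an $\RCD(K,N)$ space such a flow exists, is well-defined $\meas$-a.e., and has bounded compression, so $F_t$ preserves $\meas$-negligibility. The crucial claim to establish is the \emph{dimension invariance}: for $\meas$-a.e. $x$ and a.e. $t$, if $x\in\mathcal{R}_k$ then $F_t(x)\in\mathcal{R}_k$. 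This relies on stability of pointed tangent cones under mGH convergence together with a careful blow-up analysis, using second-order calculus on $\RCD$ spaces, of how the tangent at $F_t(x)$ is related to that at $x$.

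Combining dimension invariance with connectedness of $X$ and with $\bb$ chosen to push mass from a neighborhood of $x_1$ toward $x_2$, one produces a positive-measure subset of $\mathcal{R}_{k_1}$ whose image lies in $\mathcal{R}_{k_2}$, a contradiction; hence a unique $n$ exists. The main obstacle is precisely the dimension invariance along regular Lagrangian flows: since the definition of $\mathcal{R}_k$ involves rescaling by $\meas(B_r(\cdot))^{-1}$, one must track both the metric blow-up and the normalized measure along the flow, and cashing in the openness-type properties of the regular sets under mGH convergence is the delicate point.
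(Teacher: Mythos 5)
The paper does not prove this theorem; it is stated as a known result attributed to Bru\`e--Semola (after Mondino--Naber), and no argument is reproduced. Your proposal is a reasonable high-level sketch of the Bru\`e--Semola strategy: invoke the Mondino--Naber decomposition for existence, then rule out simultaneous positivity of two regular sets via regular Lagrangian flows of Sobolev vector fields. However, the mechanism you describe for proving ``dimension invariance'' along the flow --- stability of pointed tangent cones under mGH convergence combined with a blow-up analysis of how the tangent at $F_t(x)$ relates to that at $x$ --- is not how the argument actually runs, and attempting it directly runs into the very difficulty you flag at the end: the flow is only defined $\meas$-a.e., has no pointwise regularity a priori, and tangent cones (and the normalizing measure factors in their definition) do not transform controllably under such a map.

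The key technical input of Bru\`e--Semola, absent from your sketch, is a Lusin--Lipschitz regularity theorem for regular Lagrangian flows: for a.e.\ $t$, the map $F_t$ and its inverse are Lipschitz on the complement of a set of arbitrarily small $\meas$-measure, with quantitative estimates obtained via a Green-function/maximal-function argument (a nonsmooth analogue of the Crippa--De Lellis estimates, and this is where the second-order calculus genuinely enters). With this in hand, no blow-up analysis along the flow is needed: each $\mathcal{R}_k$ is $k$-rectifiable with $\meas|_{\mathcal{R}_k}\ll\mathcal{H}^k$ by Mondino--Naber, and a map that is bi-Lipschitz off a set of small measure and has bounded compression cannot send a positive-$\meas$-measure piece of $\mathcal{R}_{k_1}$ into $\mathcal{R}_{k_2}$ with $k_2>k_1$, because $k_1$-rectifiable sets are $\mathcal{H}^{k_2}$-negligible. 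It is this measure-theoretic rectifiability argument, not tangent-cone stability along the flow, that delivers the contradiction; as written, your sketch substitutes the easy part (the contradiction once dimension is preserved) for a pointwise tangent-cone transport that there is no tool to carry out.
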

The following is a direct consequence of the Bishop-Gromov inequality \cite[Thm.5.31]{LottVillani}, \cite[Thm2.3]{Sturm06b} (see \cite[Thm.30.11]{Villani}) and the Poincar\'e inequality \cite[Thm.1]{Rajala} with \cite[Thm.5.1]{HK}.
\begin{theorem}[Rellich compactness]\label{thm:rellich}
If $(X, \dist)$ is compact (or equivalently, $\mathrm{diam}(X, \dist)<\infty$ by properness), then the canonical inclusion map:
\begin{equation}
\iota:H^{1, 2}(X, \dist, \meas) \hookrightarrow L^2(X, \meas)
\end{equation}
is a compact operator.
\end{theorem}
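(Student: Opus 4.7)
The plan is to combine two key properties of $(X,\dist,\meas)$ --- the doubling property of $\meas$ that follows from the Bishop-Gromov inequality, and the weak $(1,2)$-Poincar\'e inequality of Rajala --- and then to invoke the general Rellich-Kondrachov embedding theorem of Hajlasz-Koskela for doubling Poincar\'e spaces, which is essentially what the cited [Thm.5.1]{HK} packages as a black box.

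First I would verify the two hypotheses on the compact space $X$. Since $(X,\dist)$ is compact and $(X,\dist,\meas)$ is an $\RCD(K,N)$ space with $N<\infty$, the Bishop-Gromov inequality implies that $\meas$ is doubling on $X$ with a uniform constant depending only on $K$, $N$, and $\diam(X,\dist)$; in particular $\meas(X)<\infty$. Rajala's Poincar\'e inequality then supplies constants $C>0$ and $\lambda\ge 1$ such that for every $f\in H^{1,2}$ and every ball $B_r(x)\subset X$ the mean oscillation of $f$ on $B_r(x)$ is bounded by $Cr$ times the square root of the mean of $|\nabla f|^2$ on the dilated ball $B_{\lambda r}(x)$.

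Given a bounded sequence $\{f_j\}\subset H^{1,2}(X,\dist,\meas)$, I would show that it has a Cauchy subsequence in $L^2(X,\meas)$ by a standard approximation-and-diagonalisation argument. For each scale $r\in (0,1)$, the compactness of $X$ combined with doubling lets one cover $X$ by finitely many balls $B_r(x_i^r)$, $1\le i\le N(r)$, and one considers the piecewise-constant approximations $f_j^r$ obtained by averaging $f_j$ over the pieces of a corresponding Borel partition. The Poincar\'e inequality, applied together with the uniform $H^{1,2}$-bound on $\{f_j\}$, controls $\|f_j-f_j^r\|_{L^2}$ by a quantity that tends to zero as $r\to 0^+$, uniformly in $j$. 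For each fixed $r$, the finite-dimensional vectors of averages are bounded in $\setR^{N(r)}$, so a subsequence of $\{f_j^r\}$ converges in $L^2$; a diagonal argument across $r\to 0^+$ then produces a subsequence of $\{f_j\}$ that is Cauchy in $L^2(X,\meas)$.

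The main (essentially only) obstacle is checking that all implicit constants are uniform in the scale $r$ across the compact space $X$, which is what makes the diagonal step legitimate; this uniformity is exactly what the global doubling property on the compact space guarantees. Everything else is the classical Hajlasz-Koskela machinery, so once Bishop-Gromov and Rajala's Poincar\'e inequality are available in the $\RCD$ setting, the statement follows as a direct application.
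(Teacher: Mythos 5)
Your proposal takes exactly the route the paper itself indicates: the paper derives this theorem by citing the Bishop--Gromov inequality (to obtain the doubling property of $\meas$ on the compact space), Rajala's weak $(1,2)$-Poincar\'e inequality, and the general Rellich--Kondrachov theorem for doubling Poincar\'e spaces in \cite[Thm.5.1]{HK}. You have simply unpacked the covering/piecewise-constant-averaging/diagonalisation machinery that the cited Hajlasz--Koskela theorem encapsulates, so the argument is correct and essentially the same.
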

\begin{definition}[Eigenfunction]
A function $f \in L^2(X, \meas)$ is said to be \textit{an eigenfunction (of $-\Delta$) on $(X, \dist, \meas)$} if $f \in D(\Delta)$ holds with $f \not \equiv 0$ and $\Delta f+\lambda f\equiv 0$ for some $\lambda \in \mathbb{R}$, where $\lambda$ is called the \textit{eigenvalue of $f$}. 
\end{definition}
A direct consequence of Theorem \ref{thm:rellich} is that if $(X, \dist)$ satisfies $\mathrm{diam}(X, \dist)\le d<\infty$, then $-\Delta$ admits a discrete non-negative spectrum: 
\begin{equation}
0=\lambda_0 < \lambda_1 \le \lambda_2 \le \cdots \to \infty,
\end{equation}
where $\lambda_i:=\lambda_i(X, \dist, \meas)$ is the $i$-th eigenvalue of $-\Delta$ counted with multiplicities.
We denote the corresponding eigenfunctions by $\psi_0, \psi_1, \ldots $ with the standard normalization:
\begin{equation}
\frac{1}{\meas(X)}\int_X|\psi_i|^2\di \meas=1.
\end{equation} 
It is known that $\psi_i$ is Lipschitz, in fact, it holds that
\begin{equation}\label{eq:eigenest}
\|\psi_i\|_{L^\infty} \leq C \lambda_i^{N/4}, \qquad \| \nabla \psi_i \|_{L^\infty} \leq C \lambda_i^{(N+2)/4}, \qquad \lambda_i \ge C^{-1}i^{2/N},
\end{equation}
where $C:=C(d, K, N)>1$. See \cite{Jiang, JiangLiZhang} (see also \cite{AHPT}).

Let us recall the \textit{Sobolev space} $H^{1, 2}(U, \dist, \meas)$ for an open subset $U$ of $X$.
A function $f \in L^2(U, \meas)$ belongs to $H^{1, 2}(U, \dist, \meas)$ if and only if $\phi f\in H^{1, 2}(X, \dist, \meas)$ holds for any $\phi \in \mathrm{Lip}(X, \dist)$ with compact support in $U$, and $|\nabla f| \in L^2(U, \meas)$ is satisfied. See also \cite{Cheeger, Shanmugalingam}.
Then we denote by $D(\Delta, U)$ the set of all $f \in H^{1, 2}(U, \dist, \meas)$ satisfying that there exists a unique $h \in L^2(U, \meas)$, denoted by $\Delta_Uf$ (or $\Delta f$ for short), such that 
\begin{equation}
\int_U\langle \nabla f, \nabla \phi \rangle  \di \meas =-\int_Uh\phi \di \meas
\end{equation}
holds for any $\phi \in \Lip (X, \dist)$ with compact support in $U$.

Finally let us end this subsection by introducing (global) harmonic functions.
\begin{definition}[Harmonic function]
A function $f:X \to \mathbb{R}$ is said to be \textit{harmonic} if $f|_U \in D(\Delta, U)$ holds with $\Delta f=0$ for any bounded open subset $U$ of $X$.
\end{definition}
See also \cite{AmbrosioHonda2, AHPT}.
\subsection{Non-collapsed $\RCD$ space}
Let us recall a special class of $\RCD(K, N)$ spaces, so-called \textit{non-collapsed space}, introduced in \cite{DePhillippisGigli} in order to give a synthetic counterpart of non-collapsed Ricci limit spaces whose study was developed in \cite{CheegerColding1, CheegerColding2}. Note that a similar notion of non-collapsed space is provided in \cite{Kita}, which is a-priori weaker than that of \cite{DePhillippisGigli} (however it is conjectured in \cite{DePhillippisGigli} that they are essentially equivalent each other as discussed below).
\begin{definition}[Non-collapsed $\RCD(K, N)$ space]\label{noncodef}
An $\RCD(K, N)$ space $(X, \dist, \meas)$ is said to be \textit{non-collapsed} if $\meas=\mathcal{H}^N$ holds.
\end{definition}
Non-collapsed $\RCD(K, N)$ spaces have nicer properties rather than that of general $\RCD(K, N)$ spaces. Let us introduce some of them: 
\begin{theorem}\label{thm:bishop}
Let $(X, \dist, \mathcal{H}^N)$ be a non-collapsed $\RCD(K, N)$ space. Then the following holds.
\begin{enumerate}
\item We have $\dim_{\dist, \meas}(X)=N$.
\item It holds that for any $x \in X$,
\begin{equation}\label{ref}
\lim_{r \to 0^+}\frac{\mathcal{H}^N(B_r(x))}{\omega_Nr^N} \le 1.
\end{equation}
Moreover the equality in (\ref{ref}) is satisfied if and only if $x \in \mathcal{R}_N$ holds.
\end{enumerate}
\end{theorem}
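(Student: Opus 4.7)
The plan is to establish part (2) first using the Bishop--Gromov inequality together with a tangent-cone analysis specific to the non-collapsed setting, and then deduce part (1) from (2).

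For the existence of the limit $\theta(x) := \lim_{r \to 0^+} \mathcal{H}^N(B_r(x))/(\omega_N r^N) \in [0, \infty)$, I would invoke the Bishop--Gromov volume comparison for $(X, \dist, \mathcal{H}^N)$, which gives that $r \mapsto \mathcal{H}^N(B_r(x))/v_{K,N}(r)$ is non-increasing, where $v_{K,N}(r)$ is the model ball volume and satisfies $v_{K,N}(r) \sim \omega_N r^N$ as $r \to 0^+$; together with local finiteness of $\mathcal{H}^N$ this yields existence and finiteness of $\theta(x)$. For the upper bound $\theta(x) \le 1$ I would pass to a tangent cone of $(X, \dist, \mathcal{H}^N, x)$: by compactness of the class of $\RCD(K, N)$ spaces, any sequence $r_i \to 0^+$ admits a subsequence along which $(X, r_i^{-1}\dist, r_i^{-N}\mathcal{H}^N, x)$ pmGH converges to an $\RCD(0, N)$ limit $(Y, \dist_Y, \meas_Y, y)$. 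Since $r_i^{-N}\mathcal{H}^N$ is the Hausdorff measure of the rescaled metric, the stability of the Hausdorff measure under non-collapsed pmGH convergence (due to De Philippis--Gigli) gives $\meas_Y = \mathcal{H}^N_Y$, so the tangent is again non-collapsed; the Bishop--Gromov monotonicity ensures that $\theta(x)$ is independent of subsequence and equals $\mathcal{H}^N_Y(B_1(y))/\omega_N$. A tangent of a non-collapsed $\RCD(K, N)$ space is an $\RCD(0, N)$ metric cone, and the bound $\mathcal{H}^N_Y(B_1(y)) \le \omega_N$ for such a cone follows from the L\'evy--Gromov inequality comparing its cross-section to the round sphere.

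For the equality case, $\theta(x) = 1$ implies that the tangent saturates the Bishop--Gromov ratio at every scale, and the volume-cone-to-metric-cone rigidity forces the tangent to be isometric, as a metric measure space, to $(\mathbb{R}^N, \dist_{\mathbb{R}^N}, \mathcal{H}^N, 0_N)$, which is exactly the condition $x \in \mathcal{R}_N$. The converse is immediate from pmGH continuity of the mass of open balls.

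For part (1), set $n := \dim_{\dist, \meas}(X)$; by Theorem~\ref{thmRCD decomposition} the set $\mathcal{R}_n$ has full $\mathcal{H}^N$-measure, hence is non-empty, so pick $x \in \mathcal{R}_n$. The defining pmGH convergence yields
\[
\frac{\mathcal{H}^N(B_{sr}(x))}{\mathcal{H}^N(B_r(x))} \longrightarrow s^n \qquad (r \to 0^+)
\]
for each $s > 0$. If $\theta(x) > 0$, then $\mathcal{H}^N(B_r(x)) \sim \theta(x)\omega_N r^N$ by part (2), which forces the same ratio to tend to $s^N$; matching the two limits gives $n = N$. If instead $\theta(x) = 0$, the Bishop--Gromov monotonicity forces $\mathcal{H}^N(B_r(x)) = 0$ for every $r > 0$, contradicting the full support of $\mathcal{H}^N$. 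Hence $n = N$. The main obstacle is the tangent-cone argument in the second paragraph: one must simultaneously justify that the limit $\meas_Y$ is the Hausdorff measure on the tangent and identify $\theta(x)$ with the rescaled limit ball mass, both of which rest on the nontrivial stability of the Hausdorff measure under non-collapsed pmGH convergence; once granted, the remaining deductions are essentially routine applications of Bishop--Gromov and its rigidity.
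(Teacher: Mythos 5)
The paper does not reprove this statement; it cites \cite[Thm.1.3 and 1.6]{DePhillippisGigli}, so the relevant comparison is with De Philippis--Gigli's own argument. Their proof of the Bishop inequality $\theta(x)\le 1$ never passes through tangent cones: after using Bishop--Gromov to see that $\theta(x)=\lim_{r\to 0^+}\mathcal H^N(B_r(x))/(\omega_N r^N)=\sup_{r>0}\mathcal H^N(B_r(x))/v_{K,N}(r)$ exists, they invoke the classical density theorem for Hausdorff measures (the upper $N$-density of a locally $\mathcal H^N$-finite set is $\le 1$ at $\mathcal H^N$-a.e.\ point) to get $\theta\le 1$ almost everywhere, and then observe that $\theta$ is lower semi-continuous, being a supremum of the lower semi-continuous maps $x\mapsto \mathcal H^N(B_r(x))/v_{K,N}(r)$; full support then propagates the a.e.\ bound to every point. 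Your tangent-cone route is genuinely different, and it is also where the proposal breaks down.

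The concrete gap is the appeal to L\'evy--Gromov. That inequality compares isoperimetric profiles, not total volumes, and the estimate you actually need, namely $\mathcal H^{N-1}(Z)\le \mathcal H^{N-1}(\mathbb S^{N-1})$ for the compact $\RCD(N-2,N-1)$ cross-section $Z$ of the metric cone, \emph{is} the Bishop volume bound one dimension down. Applying Bishop--Gromov on $Z$ from any $x_0\in Z$ only gives $\mathcal H^{N-1}(Z)\le \mathcal H^{N-1}(\mathbb S^{N-1})\cdot\theta_Z(x_0)$, so without the input $\theta_Z(x_0)\le 1$ you have proved nothing; closing this by induction on $\lceil N\rceil$ would be a legitimate strategy, but you would also have to establish that the non-collapsed structure (and the $\mathcal H^{N-1}$-measure) descends to the cross-section, which is not stated or justified. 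In addition, the ``stability of the Hausdorff measure under non-collapsed pmGH convergence'' that you use as a black box to conclude $\meas_Y=\mathcal H^N_Y$ on the tangent is a theorem of De Philippis--Gigli that is itself obtained with the Bishop inequality already in hand, so its use here risks circularity. Your treatment of the equality case via volume-cone-to-metric-cone rigidity and your deduction of part (1) from part (2) are both fine once (2) is granted, but the core estimate (\ref{ref}) as written does not close.
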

The inequality (\ref{ref}) is sometimes refered as the \textit{Bishop inequality}. See \cite[Thm.1.3 and 1.6]{DePhillippisGigli}. 
It is worth pointing out that a quntitative version of the rigidity part of the Bishop inequality is also satisfied. In order to explain it, let us use a standard notation in the convergence theory:
\begin{equation}
\Psi (\epsilon_1, \epsilon_2, \ldots, \epsilon_l;c_1, c_2, \ldots, c_m)
\end{equation}
denotes a function $\Psi: (\mathbb{R}_{>0})^l \times \mathbb{R}^m \to (0, \infty)$ satisfying
\begin{equation}
\lim_{(\epsilon_1, \ldots, \epsilon_k) \to 0} \Psi  (\epsilon_1, \epsilon_2, \ldots, \epsilon_l;c_1, c_2, \ldots, c_m)=0, \quad \forall c_i \in \mathbb{R}.
\end{equation}
Then the quantitative rigidity of (\ref{ref}) is stated as follows: 
\begin{theorem}[Almost rigidity of Bishop inequality]\label{thm:bishop2}
Let $(X, \dist, \mathcal{H}^N)$ be a non-collapsed $\RCD(K, N)$ space and let $x \in X$.
If 
\begin{equation}\label{ttffff}
\left|\frac{\mathcal{H}^N(B_r(x))}{\omega_Nr^N} - 1\right|<\epsilon
\end{equation}
holds for some $r \in (0, 1)$, then  
\begin{equation}
\dist_{\mathrm{GH}}(B_{r/2}(x), B_{r/2}(0_N))< \Psi(\epsilon, r;K, N)r
\end{equation}
and
\begin{equation}
\dist_{\mathrm{pmGH}}\left((X, t^{-1}\dist, x, \meas (B_t(x))^{-1}\meas ), (\mathbb{R}^N, \dist_{\mathbb{R}^N}, 0_N, \omega_N^{-1}\mathcal{H}^N)\right)<\Psi (\epsilon, t/r, r; K, N), \quad \forall t \in (0, 1)
\end{equation}
hold. Conversely if
\begin{equation}
\dist_{\mathrm{GH}}(B_{r}(x), B_{r}(0_N))<\epsilon r
\end{equation}
holds for some $r \in (0, 1)$, then
\begin{equation}
\left|\frac{\mathcal{H}^N(B_r(x))}{\omega_Nr^N} - 1\right|<\Psi(\epsilon, r;K, N)
\end{equation}
is satisfied.
\end{theorem}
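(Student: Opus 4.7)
The plan is to prove both directions by a compactness/contradiction argument, using two structural ingredients: Bishop--Gromov monotonicity for $\RCD(K,N)$ spaces to propagate almost-equality of the Bishop ratio across scales (with an error that collapses as $r\to 0^+$), and continuity of the $N$-dimensional Hausdorff measure along noncollapsed pmGH-convergent sequences of $\RCD(K,N)$ spaces (the De Philippis--Gigli stability machinery). The already-stated rigidity case of the Bishop inequality supplies the limit-step information.

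For the forward implication, suppose the conclusion fails. Then one can extract a sequence of pointed noncollapsed spaces $(X_i, \dist_i, \mathcal{H}^N, x_i)$ with radii $r_i \to r_\infty \in (0,1]$ satisfying $\mathcal{H}^N(B_{r_i}(x_i))/(\omega_N r_i^N) \to 1$ yet violating either the GH or the pmGH estimate. By precompactness of pointed $\RCD(K,N)$ spaces, pass to a pmGH-limit $(Y, d_Y, \mathfrak{m}_Y, y)$, which is itself $\RCD(K,N)$. The uniform lower volume bound from the hypothesis, together with continuity/lower semicontinuity of $\mathcal{H}^N$ along noncollapsed sequences, identifies $\mathfrak{m}_Y=\mathcal{H}^N$ and gives $\mathcal{H}^N(B_{r_\infty}(y))/(\omega_N r_\infty^N)=1$. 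Bishop--Gromov monotonicity in the limit then forces equality of the ratio at all smaller scales, and the rigidity case of the Bishop inequality (equality in (\ref{ref})), combined with the resulting volume-cone structure, implies that every pointed rescaling of $Y$ around $y$ at scale $t \in (0, r_\infty)$ is Euclidean and that the finite-scale ball $B_{r_\infty/2}(y)$ is isometric to $B_{r_\infty/2}(0_N)$. This contradicts the assumed GH/pmGH lower bound along the approximating sequence. The pmGH statement at arbitrary $t\in(0,1)$ follows by first using Bishop--Gromov to transfer the volume hypothesis from scale $r$ to scale $t$ with error $\Psi(\epsilon, t/r, r; K, N)$, then rescaling by $t^{-1}$ and rerunning the same blow-up/compactness argument.

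The converse is the symmetric contradiction: if $\dist_{\mathrm{GH}}(B_r(x_i), B_r(0_N)) \to 0$ while $|\mathcal{H}^N(B_r(x_i))/(\omega_N r^N) - 1| \ge \epsilon_0$, then a pmGH-limit of the sequence has its $r$-ball isometric to the Euclidean $r$-ball, while continuity of $\mathcal{H}^N$ along noncollapsed pmGH-convergence forces the volume ratio of the limit to equal $1$, a contradiction. Once again one must first confirm that the limit remains noncollapsed, which uses the lower volume bound implicit in the GH-closeness to a Euclidean ball.

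The main obstacle is upgrading the equality $\mathcal{H}^N(B_{r_\infty}(y))/(\omega_N r_\infty^N)=1$ in the limit to the honest metric statement that $B_{r_\infty/2}(y)$ is isometric to the Euclidean ball: the rigidity clause stated in the paper only records that the \emph{tangent cone} at $y$ is Euclidean. Obtaining the finite-scale metric conclusion requires the volume-cone-implies-metric-cone principle in the $\RCD$ framework and a careful reduction from general $K$ to $K=0$ via comparison with the model-space volume, with the resulting error folded into the $\Psi$-notation. The delicate point throughout is that the noncollapsed structure must persist under the blow-up and pmGH limit, which is precisely why the De Philippis--Gigli continuity of $\mathcal{H}^N$ is indispensable.
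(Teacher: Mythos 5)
The paper does not actually prove this statement; it only cites \cite[Thm.1.3 and 1.6]{DePhillippisGigli} (and \cite[Prop.6.5]{AHPT}). Your general strategy — compactness and contradiction, continuity of $\mathcal{H}^N$ along non-collapsed pmGH convergence, the Bishop rigidity, and the volume-cone-implies-metric-cone principle — is the right framework, and it is essentially the one used in the cited references.

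There is, however, a genuine gap in the forward direction. You set up the contradiction with $r_i \to r_\infty \in (0,1]$, pass to a limit $(Y, y)$ with $\mathcal{H}^N(B_{r_\infty}(y))/(\omega_N r_\infty^N) = 1$, and then assert that "Bishop--Gromov monotonicity in the limit then forces equality of the ratio at all smaller scales." This is false unless $K = 0$. Bishop--Gromov for $\RCD(K,N)$ gives monotonicity of $\mathcal{H}^N(B_t(y))/v_{K,N}(t)$ with the model volume $v_{K,N}$, not of $\mathcal{H}^N(B_t(y))/(\omega_N t^N)$. For $K < 0$ the two ratios diverge at any fixed positive scale, and one can in fact have $\mathcal{H}^N(B_{r_\infty}(y))/(\omega_N r_\infty^N) = 1$ at a singular point $y$: take a hyperbolic cone with angle deficit calibrated so that the ball of radius $r_\infty$ has exactly Euclidean volume. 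So your limit step does not deliver regularity of $y$, let alone a metric cone structure on $B_{r_\infty}(y)$, when $r_\infty > 0$. You flag "a careful reduction from general $K$ to $K = 0$" as an obstacle at the end, but that reduction cannot be postponed: it is the reason the contradiction sequence must in fact satisfy $r_i \to 0$ (this is exactly what the $r$-dependence of $\Psi(\epsilon,r;K,N)$ encodes, since the statement is vacuous for $r$ bounded away from $0$). The correct route is: extract $\epsilon_i \to 0$, $r_i \to 0$; rescale to $(X_i, r_i^{-1}\dist_i, \mathcal{H}^N_{r_i^{-1}\dist_i}, x_i)$, which is non-collapsed $\RCD(r_i^2 K, N)$; note $r_i^2 K \to 0$ so the pmGH limit $(Y,y)$ is non-collapsed $\RCD(0,N)$; now Bishop--Gromov with $K=0$ does give constancy of the Euclidean ratio from $\mathcal{H}^N(B_1(y))/\omega_N = 1$, and then the volume-cone-to-metric-cone theorem of De Philippis--Gigli plus density $1$ yields $B_{1/2}(y)$ isometric to $B_{1/2}(0_N)$, contradicting the assumed GH lower bound after rescaling. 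With that fix the forward direction and the pmGH clause (via the additional $t^{-1}$ rescaling you describe) go through, and your converse direction is fine as written.
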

See \cite[Thm.1.3 and 1.6]{DePhillippisGigli} (see also \cite[Prop.6.5]{AHPT}).

In connection with (1) of Theorem \ref{thm:bishop} it is conjectured in \cite[Rem.1.11]{DePhillippisGigli} the converse implication is true up to multiplying a positive constant to the measure, that is, if a $\RCD(K, N)$ space $(X, \dist, \meas)$ satisfies $\mathrm{dim}_{\dist, \meas}(X)=N$, then $\meas=a\mathcal{H}^N$ holds for some $a \in (0 ,\infty)$. This conjecture was proved in \cite[Cor.1.3]{Honda19} when $(X, \dist)$ is compact by using $L^2$-embedding results via the heat kernel obtained in \cite{AHPT}. Namely:
\begin{theorem}\label{wnon}
Let $(X, \dist, \meas)$ be a compact $\RCD(K, N)$ space. Then $\dim_{\dist, \meas}(X)=N$ holds if and only if $\meas=a\mathcal{H}^N$ holds for some $a \in (0, \infty)$.
\end{theorem}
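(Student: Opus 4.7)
My plan would split the equivalence. The \emph{if} direction is immediate: scaling $\meas$ by a positive constant preserves both the $\RCD(K,N)$ condition and the essential dimension (the definition of $\mathcal{R}_k$ uses the rescaled measure $\meas(B_r(x))^{-1}\meas$, so any global multiplicative factor cancels). Hence if $\meas = a\mathcal{H}^N$, then $(X,\dist,\mathcal{H}^N)$ is a non-collapsed $\RCD(K,N)$ space and part (1) of Theorem \ref{thm:bishop} yields $\dim_{\dist,\meas}(X) = N$.

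For the converse, suppose $\dim_{\dist,\meas}(X) = N$. I would first upgrade the essential-dimension hypothesis to absolute continuity with respect to $\mathcal{H}^N$ via a measurable density. Define
\begin{equation*}
\theta(x) := \lim_{r \to 0^+} \frac{\meas(B_r(x))}{\omega_N r^N}.
\end{equation*}
By the monotonicity part of Bishop--Gromov (using that the model volume $v_{K,N}(r)$ satisfies $v_{K,N}(r)/(\omega_N r^N) \to 1$ as $r \to 0^+$), this limit exists in $[0,\infty]$ for every $x$. At each $x \in \mathcal{R}_N$, the pmGH-convergence of rescaled spaces to $(\mathbb{R}^N, \dist_{\mathbb{R}^N}, \omega_N^{-1}\mathcal{H}^N, 0_N)$ forces $\meas(B_{sr}(x))/\meas(B_r(x)) \to s^N$ for each fixed $s > 0$, which combined with the Bishop--Gromov monotonicity pins $\theta(x)$ in $(0,\infty)$. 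Since the dimension hypothesis gives $\meas(X\setminus\mathcal{R}_N)=0$, the density $\theta$ is $\meas$-a.e.\ finite and positive, and the doubling property of $\meas$ (inherited from $\RCD(K,N)$) allows the Besicovitch differentiation theorem to conclude $\meas = \theta \cdot \mathcal{H}^N$ as Radon measures on $X$.

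The main step --- and the expected central obstacle --- is to show that $\theta$ is $\meas$-a.e.\ constant. My plan is to invoke the $L^2$-embedding machinery of \cite{AHPT}: the heat-kernel map $\Phi_{t,m}: X \to \mathbb{R}^m$ built from the spectral data of $(X,\dist,\meas)$ has pull-back $\Phi_{t,m}^*g_{\mathbb{R}^m}$ approximating $g_X$ in $L^p(X,\meas)$ for every $p < \infty$ under the $N$-dimensional regularity established above. Combining a Varadhan-type short-time asymptotic $p_t(x,x) \sim ((4\pi t)^{N/2}\theta(x))^{-1}$ at regular points with the global heat-trace identity $\int_X p_{2t}(x,x)\,\di\meas(x) = \sum_i e^{-2\lambda_i t}$, together with the quantitative Bishop almost-rigidity of Theorem \ref{thm:bishop2}, should enforce $\theta$ to be constant on a set of full $\meas$-measure. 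The difficulty is that pointwise regularity only yields existence and positivity of $\theta$ near each point; the \emph{global} rigidity forcing $\theta$ to take a single value must be extracted from the heat-kernel/eigenfunction side, and assembling this argument carefully --- essentially following \cite{Honda19} --- is the technical core.
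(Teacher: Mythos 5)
The paper itself does not prove Theorem~\ref{wnon}: it records it as \cite[Cor.1.3]{Honda19}, so there is no in-text argument to compare against. Your overall plan --- prove the easy implication directly and route the hard one through the heat-kernel $L^2$-embedding of \cite{AHPT} as in \cite{Honda19} --- matches what the paper's surrounding prose says is done there, and your easy direction is correct.

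For the converse, however, two of the concrete intermediate steps you write down do not hold. First, the claim that the blow-up ratio convergence $\meas(B_{sr}(x))/\meas(B_r(x)) \to s^N$ at $x \in \mathcal{R}_N$, combined with Bishop--Gromov monotonicity, "pins $\theta(x)$ in $(0,\infty)$" is false: a profile $\meas(B_r(x)) \asymp r^N\log(1/r)$ near $r=0$ satisfies the ratio convergence (the logarithms cancel in the limit) and is compatible with monotonicity of $r \mapsto \meas(B_r(x))/v_{K,N}(r)$, where $v_{K,N}$ is the $(K,N)$-model volume, yet gives $\theta(x) = +\infty$. Finiteness of $\theta$ a.e.\ on $\mathcal{R}_N$ is not a soft consequence of blow-up convergence; it is essentially the absolute continuity $\meas\measrestr\mathcal{R}_N \ll \mathcal{H}^N$, a substantive structure theorem that must be imported rather than derived the way you suggest. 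Second, invoking Theorem~\ref{thm:bishop2} here would be circular: that almost-rigidity statement is asserted only for non-collapsed $\RCD(K,N)$ spaces $(X,\dist,\mathcal{H}^N)$, which is exactly what you are trying to produce. Finally, the step you yourself flag as the technical core --- extracting global constancy of $\theta$ from heat-kernel/eigenfunction asymptotics --- is left entirely to the citation of \cite{Honda19}; that is where the real content lies, and your sketch does not supply the mechanism.
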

Finally let us end this subsection by giving the following convergence result proved in \cite[Thm.1.2]{DePhillippisGigli} (see \cite[Thm.5.9]{CheegerColding1} with \cite[Thm.0.1]{Colding} for the corresponding results on Ricci limit spaces).
\begin{theorem}[GH implies mGH]\label{GHmGH}
Let $(X_i, \dist_i, x_i, \mathcal{H}^N)$ be a sequence of pointed non-collapsed $\RCD(K, N)$ spaces. If $(X_i, \dist_i, x_i)$ pGH-converge to a pointed complete metric space $(X, \dist, x)$, then
\begin{equation}
\mathcal{H}^N(B_r(z_i)) \to \mathcal{H}^N(B_r(z))
\end{equation}
holds for any $r \in (0, \infty)$ and any $z_i \in X_i \to z \in X$.
\end{theorem}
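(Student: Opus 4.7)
The plan is a compactness-plus-identification argument. First I would extract a pmGH limit measure $\mu$ along a subsequence, then identify $\mu$ with $\mathcal{H}^N$; since every subsequence would then have the same limit, the full sequence pmGH-converges and in particular volumes of balls pass to the limit.

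For the compactness step, Bishop-Gromov (a consequence of the $\CD(K, N)$ condition) gives a uniform upper bound on $\mathcal{H}^N(B_r(z_i))$ by the corresponding model volume together with a uniform local doubling property. Combined with the given pGH convergence $(X_i, \dist_i, x_i) \to (X, \dist, x)$, the standard pmGH-compactness theorem yields a subsequence along which $(X_i, \dist_i, x_i, \mathcal{H}^N)$ pmGH-converges to $(X, \dist, x, \mu)$ for some Radon measure $\mu$ on $X$. By stability of the $\RCD(K, N)$ condition under pmGH convergence, $(X, \dist, \mu)$ is itself $\RCD(K, N)$.

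For the identification $\mu = \mathcal{H}^N$, weak convergence of measures gives $\mu(B_r(z)) = \lim_i \mathcal{H}^N(B_r(z_i))$ for all but countably many $r$ whenever $z_i \to z$. The direction $\mu(B_r(z)) \le \mathcal{H}^N(B_r(z))$ follows from Bishop-Gromov monotonicity together with the lower semicontinuity of $\mathcal{H}^N$ under Hausdorff convergence of sets. For the opposite inequality, I would apply the almost rigidity of the Bishop inequality (Theorem \ref{thm:bishop2}) at small scales: whenever the rescaled balls around $z_i$ are GH-close to a Euclidean ball, their volume ratios must converge to $1$. By Theorem \ref{thmRCD decomposition}, $\mu$-almost every point of $X$ is a regular point of the limit $\RCD(K, N)$ space, and a local extension of Theorem \ref{wnon}, combined with matching densities at regular points, forces $\mu = \mathcal{H}^N$. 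The main obstacle lies precisely in this last step: preventing mass loss in the limit, i.e.\ obtaining $\liminf_i \mathcal{H}^N(B_r(z_i)) \ge \mathcal{H}^N(B_r(z))$, requires carefully combining the quantitative Bishop rigidity at small scales with the stability of regular points under pmGH convergence, relying on the fact that non-collapsedness rigidly ties the measure to the metric.
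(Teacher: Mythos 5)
The paper does not prove this statement; it quotes it directly from \cite[Thm.1.2]{DePhillippisGigli} (with pointers to \cite[Thm.5.9]{CheegerColding1} and \cite[Thm.0.1]{Colding} for the Ricci-limit analogues). So there is no internal proof to compare against, and the question is whether your sketch would work on its own.

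As written it has three genuine gaps, one of which you yourself flag. First, the compactness step is not as stated: Theorem \ref{hohohonoho} requires both a uniform upper and a uniform \emph{lower} bound on $\mathcal{H}^N(B_1(x_i))$. Bishop--Gromov supplies the upper bound, but no lower bound is given, so the weak limit measure $\mu$ may well vanish. In that case $(X,\dist,\mu)$ is not a legitimate metric measure space, the $\RCD(K,N)$ stability does not apply, and every subsequent appeal to Theorems \ref{thmRCD decomposition}, \ref{wnon}, and \ref{thm:bishop2} becomes circular — those results presuppose precisely the non-degeneracy you are trying to produce. Note also that the theorem is allowed to degenerate (the limit may have Hausdorff dimension $<N$, in which case the conclusion is $\mathcal{H}^N(B_r(z_i))\to 0$), so the collapsed case cannot simply be excluded; it must be handled. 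Second, the ``easy'' inequality is backwards: to go from $\mu(B_r(z))=\lim_i\mathcal{H}^N(B_r(z_i))$ to $\mu(B_r(z))\le\mathcal{H}^N(B_r(z))$ you need \emph{upper} semicontinuity of $\mathcal{H}^N$, not lower. Moreover, neither semicontinuity of $\mathcal{H}^N$ holds for arbitrary Hausdorff/GH-convergent sequences (finite $\varepsilon$-nets of an interval already break it); the version that is true in this geometric setting is itself a nontrivial Cheeger--Colding-type ingredient and cannot be invoked off the shelf. Third, the hard direction (no mass loss) is not an argument but a list of relevant keywords. Theorem \ref{wnon} is stated for compact spaces and its localization is not routine; Theorem \ref{thm:bishop2} is stated for spaces already known to be non-collapsed $\RCD(K,n)$, which is exactly what is in question for the limit. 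The heart of the De Philippis--Gigli proof is precisely the quantitative control that closes this loop, and the sketch leaves it open.
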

\subsection{Second order calculus}
Let $(X, \dist, \meas)$ be an $\RCD(K, \infty)$ space.
The main purpose of this subsection is to recall the \textit{second order differential calculus} developed in \cite{Gigli}. To keep presentation short in the paper we omit several fundamental notions, for instance, 
\begin{itemize}
\item the space of all $L^2$-vector fields, $L^2$-one forms and $L^2$-tensors of type $(0, 2)$, on $A \subset X$, denoted by $L^2(T(A, \dist, \meas)), L^2(T^*(A, \dist, \meas))$ and $L^2((T^*)^{\otimes 2}(A, \dist, \meas))$, respectively;
\item the gradient operator $\nabla:H^{1, 2}(U, \dist, \meas) \to L^2(T(U, \dist, \meas))$ for an open subset $U$ of $X$, and the exterior derivative $\dist$.
\end{itemize}
We denote the pointwise Hilbert-Schmidt norm and the pointwise scalar product by $|T|_{\mathrm{HS}}$ (or $|T|$ for short) and $\langle S, T\rangle$, respectively. See \cite[Subsect.3.2]{Gigli} (see also \cite[Sect.10]{AmbrosioHonda}).
Put
\begin{equation}
\mathrm{Test}F(X, \dist, \meas):=\{f \in D(\Delta) \cap \Lip(X, \dist)\cap L^{\infty}(X, \meas); \Delta f \in H^{1, 2}(X, \dist, \meas)\}
\end{equation}
and recall that $\mathrm{Test}F(X, \dist, \meas)$ is an algebra with $|\nabla f|^2 \in H^{1, 2}(X, \dist, \meas)$ for any $f \in \mathrm{Test}F(X, \dist, \meas)$. We need the following important notion, the \textit{Hessian} of a function:
\begin{theorem}[Hessian]
For any $f \in \mathrm{Test}F(X, \dist, \meas)$ there exists a unique $T \in L^2((T^*)^{\otimes 2}(X, \dist, \meas))$, called the \textit{Hessian of f}, denoted by $\mathrm{Hess}_f$, such that for all $f_i \in \mathrm{Test}F(X, \dist, \meas)$,
\begin{equation}\label{eq:hess}
\langle T, \dist f_1\otimes \dist f_2\rangle=\frac{1}{2}\left(\langle \nabla f_1, \nabla \langle \nabla f_2, \nabla f\rangle \rangle + \langle \nabla f_2, \nabla \langle \nabla f_1, \nabla f \rangle \rangle -\langle f, \nabla \langle \nabla f_1, \nabla f_2\rangle \rangle \right)
\end{equation}
holds for $\meas$-a.e. $x \in X$. 
\end{theorem}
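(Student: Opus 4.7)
The approach, originally due to Gigli, is to (i) write the right-hand side of \eqref{eq:hess} as a candidate bilinear form on $\mathrm{Test}F\times\mathrm{Test}F$, (ii) use the Bochner inequality to obtain a pointwise Hilbert--Schmidt $L^2$-bound, and (iii) realize the resulting object as a unique $(0,2)$-tensor by duality in the $L^2$-module of tensor fields.

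For step (i), I define, for $f,g_1,g_2\in\mathrm{Test}F$,
\begin{equation*}
A_f(g_1,g_2):=\tfrac12\bigl(\langle\nabla g_1,\nabla\langle\nabla g_2,\nabla f\rangle\rangle+\langle\nabla g_2,\nabla\langle\nabla g_1,\nabla f\rangle\rangle-\langle\nabla f,\nabla\langle\nabla g_1,\nabla g_2\rangle\rangle\bigr).
\end{equation*}
Since $\mathrm{Test}F$ is an algebra with $|\nabla g|^2\in H^{1,2}\cap L^\infty$ for $g\in\mathrm{Test}F$, each carr\'e du champ $\langle\nabla g_i,\nabla g_j\rangle$ lies in $H^{1,2}\cap L^\infty$, so the three terms are all in $L^1(X,\meas)$, and the expression is manifestly symmetric in $g_1,g_2$. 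Using the Leibniz rule one checks directly that $A_f$ is ``$L^\infty$-bilinear modulo first order'', i.e. $A_f(hg_1,g_2)-hA_f(g_1,g_2)$ depends on $h,g_1,g_2,f$ only through $\dist h\otimes\dist g_2$ and $\dist g_1\otimes\dist f$; this is what eventually makes $A_f$ descend from function pairs to $1$-form pairs.

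For step (ii), I would test the Bochner inequality applied to $f$ against nonnegative $\phi\in\mathrm{Test}F$ with $\Delta\phi\in L^\infty$ and polarize. Working over finite sums $\sum_k c_k A_f(g_{1,k},g_{2,k})$ with coefficient vectors $c_k$, one obtains
\begin{equation*}
\int_X\phi\,|A_f|^2_{\mathrm{HS}}\di\meas\le\int_X\phi\bigl(\tfrac12\Delta|\nabla f|^2-\langle\nabla\Delta f,\nabla f\rangle-K|\nabla f|^2\bigr)\di\meas,
\end{equation*}
where $|A_f|_{\mathrm{HS}}$ is interpreted as the pointwise supremum over a countable dense family in $\mathrm{Test}F\times\mathrm{Test}F$, normalized by the pointwise Hilbert--Schmidt norm of the candidate argument $\dist g_1\otimes\dist g_2$. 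The right-hand side is finite because $\Delta f\in H^{1,2}$ and $|\nabla f|^2\in H^{1,2}$ for $f\in\mathrm{Test}F$; by arbitrariness of $\phi$ this yields an $L^2$-bound on $|A_f|_{\mathrm{HS}}$.

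For step (iii), the density of simple test $(0,2)$-tensors $\sum_i\chi_i\,\dist g_{1,i}\otimes\dist g_{2,i}$ in $L^2((T^*)^{\otimes 2}(X,\dist,\meas))$, combined with the Leibniz identity from step (i) and the $L^2$-bound from step (ii), allows one to extend $A_f$ by continuity and invoke Riesz representation in Gigli's $L^2$-module framework to produce a unique $T=\mathrm{Hess}_f\in L^2((T^*)^{\otimes 2})$ satisfying \eqref{eq:hess}. Uniqueness follows from the density of simple test tensors. The main obstacle is step (ii): one must make rigorous sense of the ``pointwise Hilbert--Schmidt norm'' of the a priori functional object $A_f$ \emph{before} a representing tensor is known to exist, and then show that pointwise polarizations of Bochner can be assembled coherently (via a countable dense family and subadditivity) to deliver an actual Hilbert--Schmidt bound rather than a merely spectral one.
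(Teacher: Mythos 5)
The paper does not prove this theorem: it is quoted verbatim from Gigli's memoir (the paper cites \cite[Thm.3.3.8]{Gigli} together with \cite[Lem.3.3]{Savare} immediately after the statement). Your outline is, in fact, a reasonable high-level reconstruction of exactly that proof, so the ``comparison'' here is between your sketch and the cited source rather than with anything internal to the present paper. Two remarks on the sketch itself. First, the display you write in step (ii),
\begin{equation*}
\int_X\phi\,|A_f|^2_{\mathrm{HS}}\di\meas\le\int_X\phi\Bigl(\tfrac12\Delta|\nabla f|^2-\langle\nabla\Delta f,\nabla f\rangle-K|\nabla f|^2\Bigr)\di\meas,
\end{equation*}
is not quite legitimate as written: for $f\in\mathrm{Test}F$ one only knows $|\nabla f|^2\in H^{1,2}\cap L^\infty$, not $|\nabla f|^2\in D(\Delta)$, so the object $\Delta|\nabla f|^2$ is a priori a measure (this is precisely what Savar\'e's self-improvement lemma furnishes), and one should either keep the Bochner inequality in the weak form $\tfrac12\int_X|\nabla f|^2\Delta\phi$ or explicitly invoke the measure-valued Laplacian and its Lebesgue decomposition. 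Second, your step (i) ``Leibniz identity'' is only loosely stated; what Gigli actually needs, and what his Lemma~3.3.7 proves, is a pointwise Cauchy--Schwarz-type bound for finite combinations $\sum_k a_kA_f(g_{1,k},g_{2,k})$ in terms of the Hilbert--Schmidt norm of $\sum_k a_k\,\dist g_{1,k}\otimes\dist g_{2,k}$ and the density of the Bochner defect measure; the tensoriality (well-definedness on simple $(0,2)$-tensors) is then a consequence of that bound rather than a separate algebraic lemma proved beforehand. With those two points tightened, your plan matches the argument of \cite{Gigli} and \cite{Savare}, and step (iii) is then standard $L^2$-module duality. You also silently corrected a typo in the paper's displayed formula (\ref{eq:hess}), where the last term should read $\langle\nabla f,\nabla\langle\nabla f_1,\nabla f_2\rangle\rangle$ rather than $\langle f,\nabla\langle\nabla f_1,\nabla f_2\rangle\rangle$.
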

See \cite[Thm.3.3.8]{Gigli} and \cite[Lem.3.3]{Savare}.
Moreover it is proved in \cite[Thm.3.3.8 and Cor.3.3.9]{Gigli} and \cite[Thm.3.4]{Savare} that the Hessian is well-defined for any $f \in D(\Delta)$ by satisfying (\ref{eq:hess}), and that the Bochner inequality involving the Hessian term:
\begin{equation}\label{eq:bochner}
\frac{1}{2}\int_X|\nabla f|^2\Delta \phi \di \meas \ge \int_X\phi\left( |\mathrm{Hess}_f|^2+\langle \nabla \Delta f, \nabla f\rangle +K|\nabla f|^2\right)\di \meas
\end{equation}
holds for any $f \in \mathrm{Test}F(X, \dist, \meas)$ and $\phi \in D(\Delta)$ with $\phi \ge 0$ and $\phi, \Delta \phi \in L^{\infty}(X, \meas)$. 

Let us define the \textit{Riemannian metric} as follows. See \cite[Prop.3.2]{AHPT} and \cite[Thm.5.1]{GP} for the proof.
\begin{proposition}[Riemannian metric]\label{Riemdef}
There exists a unique $g_X \in L^{\infty}((T^*)^{\otimes 2}(X, \dist, \meas))$ such that for any $f_i \in \mathrm{Test}F(X, \dist, \meas)$ we have
\begin{equation}
\langle g_X, \dist f_1 \otimes \dist f_2\rangle =\langle \nabla f_1, \nabla f_2\rangle,\quad \text{for $\meas$-a.e. $x \in X$}. 
\end{equation}
We call $g_X$ the \textit{Riemannian metric of $(X, \dist, \meas)$}. Moreover if $(X, \dist, \meas)$ is an $\RCD(K, N)$ space with $n=\dim_{\dist, \meas}(X)$, then 
\begin{equation}
|g_X|=\sqrt{n},\quad \text{for $\meas$-a.e. $x \in X$}. 
\end{equation}
\end{proposition}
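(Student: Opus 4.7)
The plan is to construct $g_X$ by first defining its pairing on differentials of test functions and then extending by the $L^\infty$-module structure of the cotangent module, following Gigli's second order calculus.

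First I would set, for any $f_1, f_2 \in \mathrm{Test}F(X, \dist, \meas)$,
\begin{equation*}
\langle g_X, \dist f_1 \otimes \dist f_2\rangle := \langle \nabla f_1, \nabla f_2 \rangle,
\end{equation*}
which lies in $L^\infty(X, \meas)$ since test functions are Lipschitz. The right-hand side is symmetric and bilinear in $(f_1, f_2)$, and the Leibniz rule for $\nabla$ makes the pairing $L^\infty$-bilinear with respect to multiplication by test functions acting as module operators on the differentials $\dist f_i$. By the construction of the cotangent module $L^2(T^*(X, \dist, \meas))$ in \cite{Gigli}, finite $L^\infty$-linear combinations of differentials of test functions are dense therein, and pointwise Cauchy-Schwarz yields $|\langle \nabla f_1, \nabla f_2\rangle| \le |\nabla f_1| \cdot |\nabla f_2|$ $\meas$-a.e., which provides exactly the bound needed to extend the bilinear form by continuity to a unique element $g_X \in L^\infty((T^*)^{\otimes 2}(X, \dist, \meas))$. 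Uniqueness is immediate from this density.

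For the identity $|g_X| = \sqrt{n}$ in the $\RCD(K, N)$ case, by Theorem \ref{thmRCD decomposition} the regular set $\mathcal{R}_n$ has full $\meas$-measure, so it suffices to verify the identity $\meas$-almost everywhere on $\mathcal{R}_n$. At a $\meas$-typical point $x \in \mathcal{R}_n$, the cotangent module is locally $n$-dimensional in Gigli's sense. Using the structure theory (Mondino-Naber together with the essential-dimension result of \cite{BrueSemola}) one may locally produce test functions $u_1, \ldots, u_n$ whose differentials form a pointwise orthonormal frame: $\langle \nabla u_i, \nabla u_j\rangle = \delta_{ij}$ $\meas$-a.e. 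In such a frame the tensor $g_X$ is represented by the $n \times n$ identity matrix, whose Hilbert-Schmidt norm equals $\sqrt{n}$.

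The main obstacle is producing the local orthonormal frame of test-function differentials at $\meas$-a.e.\ point of $\mathcal{R}_n$. This rests on the existence of $(n, \delta)$-splitting maps at regular points, followed by a Gram-Schmidt procedure carried out inside $\mathrm{Test}F(X, \dist, \meas)$ that preserves regularity (standard tools in the $\RCD$ structure theory). Once the frame is available, the computation of $|g_X|_{\mathrm{HS}}$ reduces to the elementary identity $|I_n|_{\mathrm{HS}} = \sqrt{n}$ for the identity matrix on $\mathbb{R}^n$.
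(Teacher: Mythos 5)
The paper gives no argument for Proposition \ref{Riemdef} --- it simply cites \cite[Prop.3.2]{AHPT} and \cite[Thm.5.1]{GP} --- so your proof must stand on its own. Your overall strategy (define the pairing on test-function differentials, extend through the $L^\infty$-module structure, then compute $|g_X|$ via a local orthonormal frame once the constant-dimension theorem is available) is the right one and is in the spirit of the cited references, but there are two genuine gaps.

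First, the pointwise Cauchy-Schwarz estimate does not by itself yield membership in $L^\infty((T^*)^{\otimes 2}(X,\dist,\meas))$. What it gives is $|\langle g_X, \dist f_1 \otimes \dist f_2\rangle| \le |\dist f_1 \otimes \dist f_2|_{\mathrm{HS}}$, which says the endomorphism of the cotangent module induced by $g_X$ has pointwise operator norm $\le 1$; it says nothing about the pointwise Hilbert-Schmidt norm $|g_X|_{\mathrm{HS}}$, and that is precisely what membership in $L^\infty((T^*)^{\otimes 2})$ controls. Indeed $g_X$ corresponds to the identity endomorphism, whose Hilbert-Schmidt norm equals the square root of the local module dimension, so a uniform bound requires that dimension to be essentially bounded. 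This is supplied by the structure theory on $\RCD(K,N)$ spaces with $N<\infty$, but it is not a consequence of Cauchy-Schwarz alone, nor automatic under the ambient $\RCD(K,\infty)$ assumption. You must invoke the dimension bound explicitly to close the extension step.

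Second, the ``main obstacle'' you identify --- producing an orthonormal frame consisting of differentials of test functions via splitting maps and a Gram-Schmidt procedure ``carried out inside $\mathrm{Test}F$'' --- is both unnecessary and, as phrased, unachievable: Gram-Schmidt with $L^\infty$-coefficients applied to $\dist u_1,\ldots,\dist u_n$ produces orthonormal module elements $\omega_i=\sum_j a_{ij}\,\dist u_j$ with merely measurable coefficients $a_{ij}$, which are generally not exact differentials of test functions. But that is all you need. Once Theorem \ref{thmRCD decomposition} together with the identification of the module's local dimension with $\dim_{\dist,\meas}(X)=n$ gives a local orthonormal frame $\{\omega_i\}_{i=1}^n$ of the cotangent module on a set of full measure, the defining relation forces the local representation $g_X=\sum_{i=1}^n\omega_i\otimes\omega_i$, since both sides pair identically against $\dist f_1\otimes\dist f_2$ for test $f_j$ and these generate the tensor module. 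Then $|g_X|_{\mathrm{HS}}^2=\sum_{i,j}\langle\omega_i,\omega_j\rangle^2=n$ directly, and the supposed obstacle disappears.
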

Let us introduce a relationship between $\Delta$, $g_X$ and $\mathrm{Hess}_f$.
\begin{theorem}[Laplacian is trace of Hessian under maximal dimension]\label{thm:hanresult}
Assume that $(X, \dist, \meas)$ is an $\RCD(K, N)$ space with $\mathrm{dim}_{\dist, \meas}(X)=N$. Then
for all $f \in D(\Delta)$ 
\begin{equation}\label{eq:laptrace}
\Delta f=\mathrm{tr}(\mathrm{Hess}_f)(:=\langle \mathrm{Hess}_f, g_X\rangle) \quad \text{for $\meas$-a.e. $x \in X$}.
\end{equation}
\end{theorem}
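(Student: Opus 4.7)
My plan is to invoke a dimension-refined (self-improved) form of the Bochner inequality that explicitly quantifies the discrepancy between $\Delta f$ and $\mathrm{tr}(\mathrm{Hess}_f)$, and then exploit the maximal dimension assumption $\dim_{\dist,\meas}(X)=N$ to force this discrepancy to vanish. The intuition is that, in the weighted smooth model $(M^n,g,e^{-V}\vol_g)$, one has $\Delta f = \mathrm{tr}(\mathrm{Hess}_f) - \langle \nabla V,\nabla f\rangle$, so the identity asserts precisely the absence of a drift term, which is exactly what happens when the Bakry--\'Emery dimension saturates the manifold dimension.

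First, I would reduce to the case $f\in\mathrm{Test}F(X,\dist,\meas)$. Applying the heat semigroup, $f_t:=P_tf\in\mathrm{Test}F$ for every $t>0$, and as $t\to 0^+$ one has $f_t\to f$ in $H^{1,2}$, $\Delta f_t\to \Delta f$ in $L^2$, and $\mathrm{Hess}_{f_t}\to \mathrm{Hess}_f$ in $L^2((T^*)^{\otimes 2})$. Thus pointwise a.e.\ identification for test functions propagates to all of $D(\Delta)$ by passing to a subsequence.

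The analytic heart of the argument is the following refined Bochner inequality of Han: for $f\in\mathrm{Test}F$ and every non-negative $\phi\in D(\Delta)\cap L^\infty$ with $\Delta\phi\in L^\infty$,
\begin{equation*}
\tfrac{1}{2}\int |\nabla f|^2\,\Delta\phi\,\di\meas \;\ge\; \int \phi\left(|\mathrm{Hess}_f|^2 + \frac{\bigl(\mathrm{tr}(\mathrm{Hess}_f)-\Delta f\bigr)^2}{N-n} + \langle\nabla\Delta f,\nabla f\rangle + K|\nabla f|^2\right)\di\meas,
\end{equation*}
whenever $n=\dim_{\dist,\meas}(X)<N$. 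This is a sharpening of the Hessian Bochner inequality in the excerpt, obtained via Savar\'e's self-improvement technique applied to $\Gamma_2$ on polynomial combinations of test functions, using that $|g_X|^2=n$ by Proposition~\ref{Riemdef} and the Cauchy--Schwarz gap $|\mathrm{Hess}_f|^2\,n\ge (\mathrm{tr}(\mathrm{Hess}_f))^2$. I would take this inequality as an input from \cite{Han}.

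To conclude under $n=N$, I would use that $\RCD(K,N)$ implies $\RCD(K,N+\varepsilon)$ for every $\varepsilon>0$, while the essential dimension $n=N$ is intrinsic and unchanged. Applying the improved Bochner inequality with parameter $N+\varepsilon$ gives
\begin{equation*}
\int \phi\,\frac{(\mathrm{tr}(\mathrm{Hess}_f)-\Delta f)^2}{\varepsilon}\,\di\meas \;\le\; C(f,\phi),
\end{equation*}
with $C(f,\phi)$ independent of $\varepsilon$. Letting $\varepsilon\to 0^+$ forces $\int \phi(\mathrm{tr}(\mathrm{Hess}_f)-\Delta f)^2\,\di\meas=0$, and since $\phi$ may be chosen strictly positive (e.g.\ the first eigenfunction plus a constant, or a suitable mollification of the distance from a point) we conclude $\Delta f=\mathrm{tr}(\mathrm{Hess}_f)$ $\meas$-a.e. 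The main obstacle is securing the refined Bochner inequality with the sharp $(N-n)^{-1}$ coefficient; once it is in hand, the passage to the limit in $\varepsilon$ and the approximation step are routine.
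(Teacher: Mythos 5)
The paper proves this statement by direct citation to Han's Prop.~3.2, and your reconstruction correctly identifies the two key ingredients of that argument: the dimension-refined Bochner inequality carrying the remainder $\frac{(\mathrm{tr}(\mathrm{Hess}_f)-\Delta f)^2}{N-n}$ (valid whenever $n<N$), and the limiting trick of viewing the space as $\RCD(K,N+\eps)$ with unchanged essential dimension $n=N$ and sending $\eps\to 0^+$. Up to routine details in the heat-semigroup reduction and the choice of test function $\phi$, this is essentially the same approach as the referenced proof.
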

See \cite[Prop.3.2]{Han} for the proof.
\begin{definition}[Adjoint operator $\delta$]\label{def:adj}
Let us denote by $D(\delta)$ the set of all $\omega \in L^2(T^*(X, \dist, \meas))$ such that there exists a unique $f \in L^2(X, \meas)$, denoted by $\delta \omega$, such that 
\begin{equation}
\int_X\langle \omega, \dist h \rangle \di \meas= \int_Xfh\di \meas, \quad \forall h \in H^{1, 2}(X, \dist, \meas)
\end{equation}
holds. 
\end{definition}
Let us recall the space of all test $1$-forms:
\begin{equation}
\mathrm{Test}T^*(X, \dist, \meas):=\left\{\sum_{i=1}^lf_{0, i}\dist f_{1, i}; l\in \mathbb{N}, f_{j, i} \in \mathrm{Test}F(X, \dist, \meas)\right\}
\end{equation}
which is a dense subspace of $L^2(T^*(X, \dist, \meas))$.
It is proved in \cite[Prop.3.5.12]{Gigli} that $\mathrm{Test}T^*(X, \dist, \meas) \subset D(\delta)$ holds with \begin{equation}\label{eq;delta}
\delta(f_1\dist f_2)=-\langle \nabla f_1, \nabla f_2 \rangle-f_1\Delta f_2, \quad \forall f_i \in \mathrm{Test}F(X, \dist, \meas).
\end{equation}
\begin{definition}[Sobolev spaces $W^{1, 2}_C$]\label{def:cov}
Let us denote by $W^{1, 2}_C(T^*(X, \dist, \meas))$ the set of all $\omega \in L^2(T^*(X, \dist, \meas))$ such that there exists a unique $T \in L^2((T^*)^{\otimes 2}(X, \dist, \meas))$, denoted by $\nabla \omega$,  such that for all $f_i \in \mathrm{Test}F(X, \dist, \meas)(i=1, 2)$ we have
\begin{equation}
\int_X\langle T, f_0\dist f_1\otimes \dist f_2\rangle \di \meas = \int_X\left( \langle \omega, \dist f_2\rangle \delta(f_0\dist f_1)-f_0\langle \mathrm{Hess}_{f_2}, \omega \otimes \dist f_1\rangle \right) \di \meas. 
\end{equation}
\end{definition}
See \cite[Def.3.4.1]{Gigli} (note that although the definition of \cite[Def.3.4.1]{Gigli} is stated for vector fields, it is equivalent to the above under the canonical isometry: $L^2(T^*(X, \dist, \meas)) \cong L^2(T(X, \dist, \meas))$).
It is proved in \cite[Thm.3.4.2]{Gigli} that $\mathrm{Test}T^*(X, \dist, \meas) \subset W^{1, 2}_C(T^*(X, \dist, \meas))$ holds with
\begin{equation}\label{eq:leib}
\nabla (f_1\dist f_2)=\dist f_1\otimes \dist f_2 +f_1 \mathrm{Hess}_{f_2}, \quad \forall f_i \in \mathrm{Test}F(X, \dist, \meas).
\end{equation}
\begin{definition}[Adjoint operator $\nabla^*$]\label{rrtf}
Let us denote by $D(\nabla^*)$ the set of all $T \in L^2((T^*)^{\otimes 2}(X, \dist, \meas))$ such that there exists $\alpha \in L^2(T^*(X, \dist, \meas))$, denoted by $\nabla^*T$, such that 
\begin{equation}
\int_X\langle T, \nabla \omega \rangle \di \meas =\int_X\langle \alpha, \omega \rangle \di \meas, \quad \forall \eta \in \mathrm{Test}T^*(X, \dist, \meas)
\end{equation}
holds. 
\end{definition}
Note that since the space of all test forms of type $(0, 2)$:
\begin{equation}
\mathrm{Test}(T^*)^{\otimes 2}(X, \dist, \meas):=\left\{\sum_{i=1}^lf_{0, i}\dist f_{1, i} \otimes \dist f_{2, i}; l\in \mathbb{N}, f_{j, i} \in \mathrm{Test}F(X, \dist, \meas)\right\}
\end{equation}
is also dense in $L^2((T^*)^{\otimes 2}(X, \dist, \meas))$, the existence of $\alpha$ in Definition \ref{rrtf} implies the uniqueness.
\begin{proposition}\label{thm:3}
For any $\phi \in \mathrm{Test}(X, \dist, \meas)$ we have $\dist \phi \otimes \dist \phi \in D(\nabla^*)$ with 
\begin{equation}\label{eq:2}
\nabla^*(\dist \phi \otimes \dist \phi )=-\Delta \phi \dist \phi-\frac{1}{2}\dist|\dist \phi|^2.
\end{equation}
\end{proposition}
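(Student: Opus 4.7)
The plan is to verify the defining integral identity of Definition \ref{rrtf} directly, namely to take $\alpha:=-\Delta\phi\,\dist\phi - \tfrac{1}{2}\dist|\nabla\phi|^2$ and check that
\begin{equation*}
\int_X \langle \dist\phi\otimes\dist\phi, \nabla\omega\rangle \di\meas = \int_X \langle \alpha,\omega\rangle\di\meas
\end{equation*}
holds for every $\omega\in\mathrm{Test}T^*(X,\dist,\meas)$. First one checks $\alpha\in L^2(T^*(X,\dist,\meas))$: since $\phi\in\mathrm{Test}F$ is Lipschitz, $|\dist\phi|\in L^\infty$, and combined with $\Delta\phi\in H^{1,2}\subset L^2$ together with $|\nabla\phi|^2\in H^{1,2}$ this gives both summands in $L^2(T^*)$. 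By bilinearity, it then suffices to verify the identity for the spanning $1$-forms $\omega=f_1\dist f_2$ with $f_1,f_2\in\mathrm{Test}F$.

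For such $\omega$, the Leibniz rule (\ref{eq:leib}) yields $\nabla\omega=\dist f_1\otimes\dist f_2+f_1\mathrm{Hess}_{f_2}$, whence the left-hand side splits as
\begin{equation*}
\int_X \langle\nabla\phi,\nabla f_1\rangle\langle\nabla\phi,\nabla f_2\rangle\di\meas + \int_X f_1\,\mathrm{Hess}_{f_2}(\nabla\phi,\nabla\phi)\di\meas.
\end{equation*}
Applying (\ref{eq:hess}) with the two slots equal to $\nabla\phi$ collapses the symmetrization into
\begin{equation*}
\mathrm{Hess}_{f_2}(\nabla\phi,\nabla\phi)=\langle\nabla\phi,\nabla\langle\nabla\phi,\nabla f_2\rangle\rangle-\tfrac{1}{2}\langle\nabla f_2,\nabla|\nabla\phi|^2\rangle.
\end{equation*}
A single integration by parts, exploiting $\phi\in D(\Delta)$ with the admissible test function $f_1\langle\nabla\phi,\nabla f_2\rangle\in H^{1,2}$, yields
\begin{equation*}
\int_X f_1\langle\nabla\phi,\nabla\langle\nabla\phi,\nabla f_2\rangle\rangle\di\meas = -\int_X f_1\Delta\phi\,\langle\nabla\phi,\nabla f_2\rangle\di\meas - \int_X \langle\nabla\phi,\nabla f_1\rangle\langle\nabla\phi,\nabla f_2\rangle\di\meas,
\end{equation*}
whose first term recombines into the $-\Delta\phi\,\dist\phi$ piece of $\alpha$, while the second term cancels exactly with the first summand of the displayed LHS. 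The remaining $-\tfrac{1}{2}\langle\nabla f_2,\nabla|\nabla\phi|^2\rangle$ contribution matches the $-\tfrac{1}{2}\dist|\nabla\phi|^2$ piece of $\alpha$, which completes the identity.

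The argument is essentially a polarization, at the level of Gigli's second-order calculus, of the classical identity $\tfrac{1}{2}\Delta|\nabla\phi|^2=|\mathrm{Hess}_\phi|^2+\langle\nabla\Delta\phi,\nabla\phi\rangle+\ldots$; all ingredients are unrolled directly from (\ref{eq:hess}), (\ref{eq:leib}) and the weak Laplacian relation defining $D(\Delta)$. The only mild subtlety is justifying that $f_1\langle\nabla\phi,\nabla f_2\rangle$ is a bona fide $H^{1,2}$ test function, which follows from the algebra property of $\mathrm{Test}F$ (giving $\langle\nabla\phi,\nabla f_2\rangle\in H^{1,2}$ via $|\nabla(\phi\pm f_2)|^2\in H^{1,2}$) combined with the fact that multiplication by the Lipschitz bounded function $f_1$ preserves $H^{1,2}$. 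I do not anticipate any genuine obstacle beyond this bookkeeping.
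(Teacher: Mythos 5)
Your proof is correct and follows essentially the same route as the paper: both expand $\nabla(f_1\,\dist f_2)$ via (\ref{eq:leib}), use (\ref{eq:hess}) to rewrite $\langle\mathrm{Hess}_{f_2},\dist\phi\otimes\dist\phi\rangle$, and then perform a single integration by parts against $\Delta\phi$ with the test function $f_1\langle\nabla\phi,\nabla f_2\rangle$ (the paper packages that same step through $\delta(\langle\dist\phi,\dist f_2\rangle\dist\phi)$ and identity (\ref{eq;delta}), but the underlying computation is identical). Your explicit justification that $f_1\langle\nabla\phi,\nabla f_2\rangle\in H^{1,2}$ via polarization is a welcome clarification of a point the paper leaves implicit.
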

\begin{proof}
For all $f_1, f_2 \in \mathrm{Test}F(X, \dist, \meas)$ we have
\begin{align}
&\int_X\langle \dist \phi \otimes \dist \phi, \nabla (f_1\dist f_2)\rangle \di \meas \nonumber\\
&=\int_X\left( \langle \dist \phi, \dist f_1\rangle \langle \dist \phi, \dist f_2\rangle +f_1\langle \mathrm{Hess}_{f_2}, \dist \phi \otimes \dist \phi \rangle \right)\di \meas \nonumber \\
&=\int_X\left( \langle \langle \dist \phi, \dist f_2\rangle\dist \phi, \dist f_1\rangle +\frac{f_1}{2}\left\{ 2\langle \dist \phi, \dist \langle \dist f_2, \dist \phi\rangle \rangle -\langle \dist f_2, \dist |\dist \phi|^2\rangle \right\} \right)\di \meas \nonumber \\
&=\int_X\left(\delta ( \langle \dist \phi, \dist f_2\rangle\dist \phi) f_1 +f_1\langle \dist \phi, \dist \langle \dist f_2, \dist \phi \rangle \rangle -\frac{f_1}{2}\langle \dist f_2, \dist |\dist \phi|^2\rangle \right)\di \meas \nonumber \\
&=\int_X\left(-f_1\langle \dist \phi, \dist \langle \dist f_2, \dist \phi \rangle \rangle-f_1\langle \dist \phi, \dist f_2\rangle \Delta \phi  +f_1\langle \dist \phi, \dist \langle \dist f_2, \dist \phi \rangle \rangle -\frac{f_1}{2}\langle \dist f_2, \dist |\dist \phi|^2\rangle\right) \di \meas \nonumber \\
&=\int_X\left\langle -\Delta \phi \dist \phi-\frac{1}{2}\dist |\dist \phi|^2, f_1\dist f_2\right\rangle \di \meas
\end{align}
which completes the proof.
\end{proof}
Finally let us prove a characterization of compact non-collapsed $\RCD(K, N)$ spaces:
\begin{theorem}\label{thm:0}
Let $(X, \dist, \meas)$ be a compact $\RCD(K, N)$ space with $n=\dim_{\dist, \meas}(X)$.
Then the following two conditions are equivalent:
\begin{enumerate}
\item[(1)] $g_X \in D(\nabla^*)$ holds with $\nabla^*g_X=0$.
\item[(2)] It holds that 
\begin{equation}
\meas =\frac{\meas (X)}{\mathcal{H}^n(X)}\mathcal{H}^n
\end{equation}
holds and that $(X, \dist, \mathcal{H}^n)$ is a non-collapsed $\RCD(K, n)$ space.
\end{enumerate}
\end{theorem}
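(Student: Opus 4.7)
The plan is to reduce both conditions to the pointwise identity $\Delta f = \mathrm{tr}(\mathrm{Hess}_f)$ for all $f \in D(\Delta)$, namely the content of Theorem~\ref{thm:hanresult}, and then invoke the cited rigidity results.

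First I would establish an integration-by-parts identity linking $\nabla^*g_X$ to the defect $\mathrm{tr}(\mathrm{Hess}_f)-\Delta f$. For $f_1, f_2 \in \mathrm{Test}F(X,\dist,\meas)$, the Leibniz rule (\ref{eq:leib}) for $\nabla$ together with the defining property of $g_X$ from Proposition~\ref{Riemdef} give
\begin{align*}
\int_X \langle g_X, \nabla(f_1 \dist f_2)\rangle \di \meas
&= \int_X \langle g_X, \dist f_1 \otimes \dist f_2\rangle \di \meas + \int_X f_1\, \langle g_X, \mathrm{Hess}_{f_2}\rangle\, \di \meas \\
&= \int_X \langle \nabla f_1, \nabla f_2\rangle \di \meas + \int_X f_1\, \mathrm{tr}(\mathrm{Hess}_{f_2})\, \di \meas \\
&= \int_X f_1 \bigl(\mathrm{tr}(\mathrm{Hess}_{f_2}) - \Delta f_2\bigr) \di \meas,
\end{align*}
the last equality being the definition of $\Delta$. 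Since $\mathrm{Test}T^*$ is dense in $L^2(T^*(X,\dist,\meas))$ and $\mathrm{Test}F$ is dense in $L^2$, this shows that $g_X \in D(\nabla^*)$ with $\nabla^* g_X = 0$ if and only if $\mathrm{tr}(\mathrm{Hess}_f) = \Delta f$ for every $f \in \mathrm{Test}F(X,\dist,\meas)$, a statement that extends to every $f \in D(\Delta)$ by the usual closure arguments inside Gigli's calculus.

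For the direction $(2) \Rightarrow (1)$: rescaling the measure by a positive constant affects neither the Sobolev space nor the Laplacian nor the Hessian, so all of Gigli's operators for $(X,\dist,\meas)$ coincide with those for $(X,\dist,\mathcal{H}^n)$. By Theorem~\ref{thm:bishop}(1), $\dim_{\dist,\mathcal{H}^n}(X) = n$, and Theorem~\ref{thm:hanresult} applied to the non-collapsed $\RCD(K,n)$ space $(X,\dist,\mathcal{H}^n)$ yields $\Delta f = \mathrm{tr}(\mathrm{Hess}_f)$ for every $f \in D(\Delta)$. The equivalence above then gives $g_X \in D(\nabla^*)$ with $\nabla^* g_X = 0$.

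For the converse $(1) \Rightarrow (2)$: the equivalence produces $\Delta f = \mathrm{tr}(\mathrm{Hess}_f)$ for all $f \in D(\Delta)$, which is precisely the hypothesis under which the Bru\'e-Semola result cited in the outline of the proof, confirming the De~Philippis-Gigli conjecture, guarantees that $(X,\dist,\meas)$ is a \emph{weakly non-collapsed} $\RCD(K,n)$ space, i.e., that $(X,\dist, a\mathcal{H}^n)$ is an $\RCD(K,n)$ space for some $a \in (0,\infty)$. Since $X$ is compact, Theorem~\ref{wnon} then forces $\meas = a\mathcal{H}^n$; evaluating at $X$ identifies $a = \meas(X)/\mathcal{H}^n(X)$, so $(X,\dist,\mathcal{H}^n)$ is a genuine non-collapsed $\RCD(K,n)$ space. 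The main obstacle is this very last step: the trace identity by itself does not obviously force the essential dimension to coincide with $n$ in a rigid way nor $\meas$ to be a multiple of $\mathcal{H}^n$, and extracting this is precisely the content of the deep Bru\'e-Semola dimension-rigidity result, combined with the author's compactness-based promotion from weakly non-collapsed to non-collapsed; everything else is formal manipulation inside Gigli's second-order calculus.
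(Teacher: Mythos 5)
Your proposal is correct and follows essentially the same route as the paper: the same Leibniz/integration-by-parts computation reducing $\nabla^*g_X=0$ to the trace identity $\Delta f=\mathrm{tr}(\mathrm{Hess}_f)$, then Bru\`e-Semola together with Theorem~\ref{wnon} for $(1)\Rightarrow(2)$, and Theorem~\ref{thm:hanresult} with the scaling invariance of Gigli's operators for $(2)\Rightarrow(1)$. The only blemish is terminological: ``weakly non-collapsed'' refers to $\dim_{\dist,\meas}(X)=N$ (Kitabeppu's a priori weaker condition), not to $\meas=a\mathcal{H}^n$ --- the latter is what Theorem~\ref{wnon} produces from the former under compactness --- but your chain of deductions is the correct one regardless.
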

\begin{proof}
Let us first prove the implication from (1) to (2). Assume that (1) holds. Then for any $f_i \in \mathrm{Test}F(X, \dist, \meas)(i=1, 2)$ we have
\begin{equation}
\int_X\langle g_X, \nabla (f_1\dist f_2)\rangle \di \meas =0,
\end{equation}
thus by (\ref{eq:leib})
\begin{equation}
\int_X\langle \dist f_1, \dist f_2\rangle \di \meas =-\int_Xf_1\mathrm{tr}(\mathrm{Hess}_{f_2})\di \meas.
\end{equation}
Therefore we have $\Delta f_2=\mathrm{tr}(\mathrm{Hess}_{f_2})$. Then applying \cite[Thm.3.12]{BrueSemola} with \cite[Thm.1.12]{Gigli} yields $N=\dim_{\dist, \meas}(X)$. Thus we have (2) because of Theorem \ref{wnon}.

The converse implication is a direct consequence of (\ref{eq:leib}) and Theorem \ref{thm:hanresult}. Thus we conclude.
\end{proof}
\subsection{Convergence of tensors}
Let us recall the following well-known compactness result for a sequence of $\RCD(K, N)$ spaces (see \cite[Thm.6.11]{AmbrosioGigliSavare13}, \cite[Thm.5.3.22]{ErbarKuwadaSturm}, \cite[Thm.7.2]{GigliMondinoSavare13}, \cite[Thm.5.19]{LottVillani}, and \cite[Thm.4.20]{Sturm06a}).
\begin{theorem}[Compactness]\label{hohohonoho}
Let $(X_i, \dist_i, x_i, \meas_i)$ be a sequence of pointed $\RCD(K, N)$ spaces with 
\begin{equation}
0<\inf_i\meas_i(B_1(x_i))\le \sup_i \meas_i(B_1(x_i))<\infty. 
\end{equation}
Then after passing to a subsequence, there exists a pointed $\RCD(K, N)$ space $(X, \dist, x, \meas)$ such that $(X_i, \dist_i, x_i, \meas_i)$ pmGH-converge to $(X, \dist, x, \meas)$.
\end{theorem}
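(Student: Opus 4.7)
The plan is to split the argument into metric precompactness, measure extraction, and stability of the $\RCD(K,N)$ condition. First I would invoke the Bishop-Gromov inequality, which holds under the $\CD(K,N)$ part of $\RCD(K,N)$: this gives, for every $R>0$, a constant $C=C(K,N,R)$ with $\meas_i(B_R(x_i))\le C\,\meas_i(B_1(x_i))$, and together with $\sup_i \meas_i(B_1(x_i))<\infty$ yields uniform mass bounds on every ball of fixed radius centered near the basepoint. The same Bishop-Gromov comparison gives uniform metric doubling of $(X_i,\dist_i)$ on bounded sets, so Gromov's precompactness theorem supplies, after extracting a subsequence, a pointed complete metric space $(X,\dist,x)$ with $(X_i,\dist_i,x_i)\to (X,\dist,x)$ in the pointed Gromov-Hausdorff sense; properness of $(X,\dist)$ is inherited.

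Next I would produce the limit measure. Realizing the pGH convergence inside a common ambient space, the uniform bounds $\meas_i(B_R(x_i))\le C(K,N,R)$ give tightness of $(\meas_i)$ on bounded regions, while $\inf_i \meas_i(B_1(x_i))>0$ prevents the limit mass from degenerating. A diagonal extraction yields a Borel measure $\meas$ on $X$ such that $\meas_i\to \meas$ vaguely; passing the doubling inequality to the limit shows that $\meas$ is locally finite and has full support on $X$, so the convergence upgrades to pmGH. This identifies a candidate limit $(X,\dist,x,\meas)$.

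The main obstacle, and the substantive content of the theorem, is the stability of the $\RCD(K,N)$ condition itself under pmGH convergence. For the $\CD(K,N)$ half, formulated as displacement convexity of the R\'enyi/Shannon entropies along $W_2$-geodesics, stability follows because optimal plans and entropy functionals behave well under mGH convergence, as in the original work of Lott-Villani and Sturm. For the infinitesimally Hilbertian half, one shows Mosco convergence of the Cheeger energies $\Ch_i$ to the Cheeger energy $\Ch$ of the limit; passing the parallelogram identity through Mosco convergence shows that $\Ch$ is a quadratic form, i.e.\ $H^{1,2}$ of the limit is Hilbert. Together these two pieces give that the limit $(X,\dist,\meas)$ is $\RCD(K,N)$; this is exactly the package proved in the references cited just before the statement (Ambrosio-Gigli-Savar\'e, Erbar-Kuwada-Sturm, Gigli-Mondino-Savar\'e, Lott-Villani, Sturm), and combining it with the extraction above completes the proof.
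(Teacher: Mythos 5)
The paper does not actually prove this statement; it treats it as a known compactness result and simply cites \cite{AmbrosioGigliSavare13}, \cite{ErbarKuwadaSturm}, \cite{GigliMondinoSavare13}, \cite{LottVillani}, \cite{Sturm06a}. Your outline correctly reconstructs the standard argument behind those references: Bishop--Gromov gives uniform measure (hence metric) doubling and uniform local volume bounds, Gromov precompactness gives a pGH sublimit which is proper, vague compactness of measures combined with the lower bound $\inf_i\meas_i(B_1(x_i))>0$ and the doubling estimate produces a locally finite, fully supported limit measure, and the two halves of the $\RCD(K,N)$ condition are then preserved: the $\CD(K,N)$ displacement-convexity condition by the Lott--Villani/Sturm stability theory, and infinitesimal Hilbertianity by Mosco convergence of Cheeger energies (a Mosco limit of quadratic forms is quadratic). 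This matches the content of the cited works, so the proposal is correct and in line with how the paper disposes of the statement; the only caveat is that your treatment of the $\CD$-stability step is very brief for what is the technically heaviest piece, but as a sketch it is accurate.
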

Let us fix $R \in (0, \infty]$ and a pmGH convergent sequence of pointed $\RCD(K, N)$ spaces:
\begin{equation}\label{jju}
(X_i, \dist_i, x_i, \meas_i) \stackrel{\mathrm{pmGH}}{\to} (X, \dist, x, \meas).
\end{equation}
In this setting it is well-defined that a sequence $f_i \in L^p(B_R(x_i), \meas_i)$ \textit{$L^p$-strongly/weakly converge to $f \in L^p(B_R(x), \meas)$ on $B_R(x)$} for $p \in [1, \infty)$. Note that $B_R(x)=X$ when $R=\infty$. We skip the precise definition (because we omitted the pmGH convergence. A typical example is that $1_{B_r(y_i)}$ $L^p$-strongly converge to $1_{B_r(y)}$ on $B_R(x)$ for any $r \in (0, \infty)$ and any $y_i \in X_i \to y \in X$). 
See \cite{AmbrosioHonda, AmbrosioHonda2, AmbrosioStraTrevisan, Ding, GigliMondinoSavare13, Honda2, KuwaeShioya}.

Instead of giving that, we give the definition of $L^2$-convergence of tensors as follows. Let us denote by $\nabla_i, \Delta_i$ and etc, the gradient operator, Laplacian etc for $(X_i, \dist_i, \meas_i)$ (such notations will be used later immediately). 
\begin{definition}[Convergence of tensors]
We say that a sequence $T_i \in L^2((T^*)^{\otimes 2}(B_R(x_i), \dist_i, \meas_i))$ \textit{$L^2$-weakly converge to $T \in L^2((T^*)^{\otimes 2}(B_R(x), \dist, \meas))$ on $B_R(x)$} if the following two conditions are satisfied.
\begin{enumerate}
\item $\sup_i\|T_i\|_{L^2(B_R(x_i))}<\infty$ holds.
\item We see that $\langle T_i, \dist f_{1, i} \otimes \dist f_{2, i}\rangle$  $L^2$-weakly converge to $\langle T, \dist f_1 \otimes \dist f_2\rangle$ on $B_R(x)$ whenever $f_{j, i} \in \mathrm{Test}F(X_i, \dist_i, \meas_i)$ $L^2$-strongly converge to $f_j \in \mathrm{Test}F(X, \dist, \meas)$ with 
\begin{equation}
\sup_{i, j}\left(\|f_{j, i}\|_{L^{\infty}(X_i)}+\|\nabla_i f_{j, i}\|_{L^{\infty}(X_i)} +\|\Delta_if_{j, i}\|_{L^2(X_i)}\right)<\infty.
\end{equation}
\end{enumerate}
Moreover we say that \textit{$T_i$ $L^2$-strongly converge to $T$ on $B_R(x)$} if it is an $L^2$-weak convergent sequence on $B_R(x)$ with $\limsup_{i \to \infty}\|T_i\|_{L^2(B_R(x_i))}\le \|T\|_{L^2(B_R(x))}$ holds. 
\end{definition}
Compare with \cite[Def.5.18 and Lem.6.4]{AHPT} (see also \cite[Thm.10.3]{AmbrosioHonda} and \cite[Prop.3.64]{Honda2}).
Next let us recall the definition of $H^{1, 2}$-strong convergence:
\begin{definition}[$H^{1, 2}$-strong convergence]\label{dettrrrss}
We say that a sequence of $f_i \in H^{1, 2}(B_R(x_i), \dist_i, \meas_i)$ \textit{$H^{1, 2}$-strongly converge to $f \in H^{1, 2}(B_R(x), \dist, \meas)$ on $B_R(x)$} if $f_i$ $L^2$-strongly converge to $f$ on $B_R(x)$ with $\lim_{i \to \infty}\|\nabla f_i\|_{L^2(B_R(x_i))}=\|\nabla f\|_{L^2(B_R(x))}$.
\end{definition}
In connection with Definition \ref{dettrrrss}, we introduce a Rellich type compactness result with respect to mGH-convergence (see \cite[Thm.6.3]{GigliMondinoSavare13} and \cite[Thm.7.4]{AmbrosioHonda})
\begin{theorem}[Convergence of gradient operators]\label{bbbg}
If a sequence $f_i \in H^{1, 2}(B_R(x_i), \dist_i, \meas_i)$ satisfies $\sup_i\|f_i\|_{H^{1, 2}}<\infty$, then after passing to a subsequence, there exists $f \in H^{1, 2}(B_R(x), \dist, \meas)$ such that
\begin{equation}
\liminf_{i \to \infty}\|\nabla f_{i}\|_{L^2(B_R(x_i))} \ge \|\nabla f\|_{L^2(B_R(x))}
\end{equation}
holds and that 
$f_i$ $L^2$-strongly converge to $f$ on $B_r(x)$ for any $r \in (0, R)$. Moreover if in addition $f_i$ $H^{1, 2}$-strongly converge to $f$ on $B_r(x)$ for some $r \in (0, R]$, then $|\nabla f_i|^2$ $L^1$-strongly converge to $|\nabla f|^2$ on $B_r(x)$.
\end{theorem}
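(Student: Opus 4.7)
The plan is to derive Theorem \ref{bbbg} from Mosco-convergence of the Cheeger energies along the pmGH-convergence (\ref{jju}), combined with a local Rellich-type compactness that uses the uniform doubling and Poincar\'e inequality available on $\RCD(K,N)$ spaces. The key outside inputs are: (i) the $\Gamma$-$\liminf$/$\Gamma$-$\limsup$ compatibility of $\Ch_i$ with $\Ch$; (ii) the Bishop--Gromov inequality; and (iii) the local $(1,2)$-Poincar\'e inequality of Rajala.

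First I would use the uniform $L^2$-bound on $f_i$ together with a countable dense family of test functions on an extrinsic realisation of the pmGH-convergence to extract a subsequence and an $L^2$-weak limit $f\in L^2(B_R(x),\meas)$. The $\Gamma$-$\liminf$ part of the Mosco convergence then gives $f\in H^{1,2}(B_R(x),\dist,\meas)$ together with
\begin{equation}
\liminf_{i\to\infty}\|\nabla f_i\|_{L^2(B_R(x_i))}\ge \|\nabla f\|_{L^2(B_R(x))}.
\end{equation}
To upgrade to $L^2$-strong convergence on each $B_r(x)$ with $r<R$, I would fix $r<r'<R$, cover $B_{r'}(x_i)$ by a disjoint Voronoi-like partition associated to a maximal $\delta$-separated net, and replace $f_i$ by its piecewise-constant average $\tilde f_i^{(\delta)}$. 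The Poincar\'e inequality yields a uniform bound $\|f_i-\tilde f_i^{(\delta)}\|_{L^2(B_r(x_i))}\le C(K,N,r')\,\delta\,\|\nabla f_i\|_{L^2(B_{r'}(x_i))}$; for fixed $\delta$ the finitely many ball averages converge to their analogues on the limit thanks to the $L^2$-strong convergence of characteristic functions of balls paired against the $L^2$-weak convergence of $f_i$. Letting $i\to\infty$ and then $\delta\to 0$ delivers the desired $L^2$-strong convergence.

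For the last assertion, assume $\|\nabla f_i\|_{L^2(B_r(x_i))}\to \|\nabla f\|_{L^2(B_r(x))}$. I would test the $\Gamma$-$\liminf$ inequality also against a recovery sequence $g_i\to f$ to obtain $\int_{B_r(x_i)}\langle\nabla f_i,\nabla g_i\rangle\,\di\meas_i\to\int_{B_r(x)}|\nabla f|^2\,\di\meas$, which combined with convergence of the $L^2$-norms of the gradients forces $L^2$-strong convergence of the gradient densities, and then $L^1$-strong convergence of $|\nabla f_i|^2$ via the pointwise estimate $|a^2-b^2|\le (a+b)|a-b|$ and Cauchy--Schwarz. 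The principal obstacle is the Rellich-type compactness step: it cannot be obtained by soft functional-analytic means and genuinely relies on the quantitative geometry (doubling plus Poincar\'e) supplied by the $\RCD(K,N)$ condition; all other steps are essentially formal consequences of Mosco convergence once Rellich is in hand.
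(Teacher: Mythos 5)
The paper does not prove Theorem~\ref{bbbg}; it is imported from \cite[Thm.~6.3]{GigliMondinoSavare13} and \cite[Thm.~7.4]{AmbrosioHonda}, so there is no internal argument for you to match. Your sketch reconstructs the philosophy of those references faithfully: \cite{GigliMondinoSavare13} proves Mosco convergence of the Cheeger energies along pmGH convergence, which yields the $\Gamma$-$\liminf$ lower-semicontinuity and a Rellich-type compactness, and \cite{AmbrosioHonda} localizes this to Sobolev spaces on balls, which is exactly the version quoted here.

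Two points deserve more care than your sketch gives them. First, the Mosco convergence in \cite{GigliMondinoSavare13} concerns the global Cheeger energies on $X_i$, whereas the $f_i$ live only on $B_R(x_i)$; deriving a $\Gamma$-$\liminf$ inequality for the local minimal relaxed gradient on a ball, and producing recovery sequences supported there, is precisely the nontrivial content of \cite{AmbrosioHonda}, which you invoke only implicitly. Second, the cited sources implement the Rellich step by mollifying with the heat semigroup $P^i_t$ and exploiting the Bakry--Ledoux gradient bound for uniform Lipschitz control, concluding by an Arzel\`a--Ascoli-type argument; your Voronoi/Poincar\'e alternative is also viable ($\RCD(K,N)$ does supply uniform doubling and $(1,2)$-Poincar\'e constants), but passing the Voronoi-cell averages through the varying-space limit needs more bookkeeping than you indicate --- it is cleaner to replace the Voronoi partition by a bounded-overlap cover of balls lifted from a $\delta$-net on the limit via the GH approximations, for which the convergence of averages is immediate from $L^2$-weak convergence of $f_i$ paired against $L^2$-strong convergence of indicator functions of balls. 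Finally, your polarization step can be closed rigorously as follows: for a recovery sequence $g_i$ on $B_r$, apply the $\Gamma$-$\liminf$ to $(f_i+g_i)/2 \weakto f$ and combine with Cauchy--Schwarz to get $\int\langle\nabla f_i,\nabla g_i\rangle\di\meas_i\to\|\nabla f\|_{L^2}^2$, hence $\|\nabla(f_i-g_i)\|_{L^2}\to 0$, hence $|\nabla f_i|\to|\nabla f|$ $L^2$-strongly, hence $|\nabla f_i|^2\to|\nabla f|^2$ $L^1$-strongly.
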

Note that in Theorem \ref{bbbg} if $R<\infty$, then $L^2$-strong convergence of $f_i$ to $f$ is satisfied on $B_R(x)$, which is justified by using the Sobolev embedding theorem $H^{1, 2} \hookrightarrow L^{2N/(N-2)}$. See \cite[Thm.4.2]{AmbrosioHonda2}.
Based on Theorem \ref{bbbg}, we can easily check the following by an argument similar to the proof of \cite[Thm.10.3]{AmbrosioHonda} (see also \cite[Def.5.18 and Lem.6.4]{AHPT} and \cite[Prop.3.64]{Honda2}).  
\begin{proposition}[Lower semicontinuity of $L^2$-norms]\label{prop:low}
A sequence $T_i \in L^2((T^*)^{\otimes 2}(B_R(x_i), \dist_i, \meas_i))$ $L^2$-weakly converge to $T \in L^2((T^*)^{\otimes 2}(B_R(x), \dist, \meas))$ on $B_R(x)$, then it holds that
\begin{equation}
\liminf_{i \to \infty}\|T_i\|_{L^2(B_R(x_i))}\ge \|T\|_{L^2(B_R(x))}.
\end{equation}
\end{proposition}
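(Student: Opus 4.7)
The plan is a standard duality-plus-density argument: bound $\|T\|_{L^2(B_R(x))}$ from above by testing $T$ against unit test $(0,2)$-tensors, and then transfer each such pairing to $X_i$ using the hypothesized weak convergence together with a Cauchy--Schwarz estimate on $X_i$.

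First I would reduce to test tensors. Since $\mathrm{Test}(T^*)^{\otimes 2}(X, \dist, \meas)$ is dense in $L^2((T^*)^{\otimes 2}(X, \dist, \meas))$, it suffices to show
\[
\int_{B_R(x)} \langle T, S\rangle \di \meas \le \|S\|_{L^2(B_R(x))} \liminf_{i\to\infty} \|T_i\|_{L^2(B_R(x_i))}
\]
for every $S = \sum_{j=1}^{l} f_{0,j}\di f_{1,j} \otimes \di f_{2,j}$ with $f_{k,j} \in \mathrm{Test}F(X,\dist,\meas)$; the proposition then follows by taking the supremum over such $S$ with $\|S\|_{L^2(B_R(x))}\le 1$. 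If $R<\infty$ one may multiply $S$ by a Lipschitz cutoff supported in $B_R(x)$ to make sure only the ball matters.

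Next, for each such $S$ I would construct approximants $S_i = \sum_{j} f_{0,j,i}\di_i f_{1,j,i} \otimes \di_i f_{2,j,i}$ on $X_i$ with $f_{k,j,i}\in \mathrm{Test}F(X_i,\dist_i,\meas_i)$ that $L^2$-strongly converge to $f_{k,j}$ on $B_R(x)$ and satisfy the uniform $L^\infty$-bound, Lipschitz bound, and $L^2$-bound on the Laplacian demanded by the definition of $L^2$-weak convergence of tensors. This is by now a standard construction in the $\RCD$ convergence theory: applying the heat semigroup on $X_i$ to $L^2$-strong approximants of the $f_{k,j}$, and using the pmGH-stability of the heat flow together with the Bakry--\'Emery type gradient estimates uniform in $i$, provides such sequences (see \cite[Thm.~10.3]{AmbrosioHonda}, \cite[Def.~5.18 and Lem.~6.4]{AHPT} and the analogous arguments in \cite[Prop.~3.64]{Honda2}).

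Given these approximants, the definition of $L^2$-weak convergence of $T_i$ to $T$, combined with the $L^2$-strong convergence of the scalar multipliers $f_{0,j,i}$, gives
\[
\int_{B_R(x_i)} \langle T_i, S_i\rangle \di \meas_i \longrightarrow \int_{B_R(x)} \langle T, S\rangle \di \meas,
\]
while polarization of Theorem \ref{bbbg} together with the uniform Lipschitz bounds on the test functions yields $L^p$-strong convergence of each scalar factor $\langle \di_i f_{k,j,i}, \di_i f_{k',j',i}\rangle$ for every $p<\infty$, and hence $\|S_i\|_{L^2(B_R(x_i))} \to \|S\|_{L^2(B_R(x))}$. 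Applying Cauchy--Schwarz
\[
\int_{B_R(x_i)} \langle T_i, S_i\rangle \di \meas_i \le \|T_i\|_{L^2(B_R(x_i))}\,\|S_i\|_{L^2(B_R(x_i))}
\]
and taking $\liminf_{i\to\infty}$ finishes the argument. The main obstacle is the second step: manufacturing test-function approximants on the varying $X_i$ with the three uniform bounds needed to feed them into the weak-convergence hypothesis; this is handled by the heat-semigroup tools already developed in the cited references, so no new analytic input is required.
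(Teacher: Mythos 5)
Your argument is essentially the standard duality/density proof that the paper is invoking (Proposition \ref{prop:low} is stated with only a reference to \cite[Thm.10.3]{AmbrosioHonda} and the related tensor-convergence lemmas, not with a written proof): restrict to test $(0,2)$-tensors $S$ on the limit, approximate them by test tensors $S_i$ on $X_i$ with uniform bounds via heat-flow mollification and the spectral/Sobolev stability results, feed $S_i$ into the definition of $L^2$-weak convergence of $T_i$ together with Cauchy--Schwarz, and pass to the $\liminf$. The two aspects you flag as needing care — that the $L^2$-weak pairing against $f_{0,j,i}\,\dist_i f_{1,j,i}\otimes \dist_i f_{2,j,i}$ follows from the definition (which only tests against $\dist_i f_{1,i}\otimes \dist_i f_{2,i}$ but does so in the $L^2$-weak sense, so multiplication by an $L^2$-strongly convergent scalar is exactly what is allowed), and that $\|S_i\|_{L^2(B_R(x_i))}\to\|S\|_{L^2(B_R(x))}$ follows from polarization of Theorem \ref{bbbg} together with the uniform Lipschitz bounds and the strong convergence of $1_{B_R(x_i)}$ — are handled correctly. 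So this is the same approach the paper has in mind, and it is complete modulo the cited approximation machinery.
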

The convergence of the heat flows with respect to (\ref{jju}) is discussed in \cite[Thm.5.7]{GigliMondinoSavare13}  (more precisely they discussed it in more general setting, for $\CD(K, \infty)$ spaces under pmG-convergence). As a corollary, it is proved in \cite[Thm.7.8]{GigliMondinoSavare13} that the following spectral convergence result holds, which will play a key role later (see \cite[Thm.7.3 and 7.9]{CheegerColding3} for Ricci limit spaces. Compare with \cite[Prop.3.3]{AmbrosioHonda2}).
\begin{theorem}[Spectral convergence]\label{thm:spectral}
If $(X, \dist)$ is compact, 
then 
\begin{equation}
\lambda_j(X_i, \dist_i, \meas_i) \to \lambda_j(X, \dist, \meas),\quad \forall j.
\end{equation}
Moreover for any $\phi_j \in D(\Delta)$ with $\Delta \phi_j+\lambda_j(X, \dist, \meas)\phi_j=0$, there exists a sequence of $\phi_{j, i} \in D(\Delta_i)$ such that $\Delta_i\phi_{j, i}+\lambda_j(X_i, \dist_i, \meas_i)\phi_{j, i}=0$ holds and that $\phi_{j, i}$ $H^{1, 2}$-strongly converge to $\phi_j$ on $X$
\end{theorem}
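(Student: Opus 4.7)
The plan is to deduce both the eigenvalue convergence and the $H^{1,2}$-strong convergence of eigenfunctions from the min-max (variational) characterization of eigenvalues, combined with the Rellich-type compactness of Theorem \ref{bbbg} and the $H^{1,2}$-strong recovery sequences produced by the heat flow (Thm.~5.7 of \cite{GigliMondinoSavare13}, invoked in the excerpt just above the statement). Throughout I would use the characterization
\begin{equation}
\lambda_j(Y,\dist_Y,\meas_Y)=\inf_{V}\sup_{f\in V\setminus\{0\}}\frac{\Ch_Y(f)}{\|f\|_{L^2(\meas_Y)}^{2}},
\end{equation}
where $V$ ranges over $(j+1)$-dimensional subspaces of $H^{1,2}(Y,\dist_Y,\meas_Y)$. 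Note also that the hypotheses (compactness of the limit, pmGH-convergence, and the uniform $\RCD(K,N)$ bound) together with Bishop--Gromov yield a uniform diameter and uniform $\meas_i$-bounds on the $X_i$, so the spectra are discrete and all $\lambda_j(X_i,\dist_i,\meas_i)$ are comparable up to a constant depending only on $K,N,\diam$, which gives a safety net of uniform bounds.

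\textbf{Step 1: $\limsup$ bound.} Fix $L^2$-orthonormal eigenfunctions $\phi_0,\ldots,\phi_j$ on $X$. Using the $L^2$-strong convergence of the heat flows, set $\phi_{\ell,i,t}:=P^{i}_{t}\tilde{\phi}_{\ell,i}$ for some $L^2$-strong approximation $\tilde{\phi}_{\ell,i}$ of $\phi_\ell$; for fixed $t>0$ one gets $\phi_{\ell,i,t}\to P_t\phi_\ell$ $H^{1,2}$-strongly on $X$. A standard diagonal argument in $t\downarrow 0$ produces $\phi_{\ell,i}\to\phi_\ell$ $H^{1,2}$-strongly on $X$. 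Since $L^2$-strong convergence passes to inner products, a Gram--Schmidt correction of size $o(1)$ makes the family $L^2$-orthonormal on $X_i$ for large $i$. Plugging $V_i:=\Span\{\phi_{0,i},\dots,\phi_{j,i}\}$ into the variational characterization and using $\Ch_i(\phi_{\ell,i})\to\Ch(\phi_\ell)=\lambda_\ell(X,\dist,\meas)$ gives $\limsup_i\lambda_j(X_i,\dist_i,\meas_i)\le\lambda_j(X,\dist,\meas)$.

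\textbf{Step 2: $\liminf$ bound.} Let $\phi_{0,i},\ldots,\phi_{j,i}$ be $L^2$-orthonormal eigenfunctions on $X_i$ for $\lambda_0(X_i),\ldots,\lambda_j(X_i)$. By Step~1 we have $\sup_i\Ch_i(\phi_{\ell,i})=\sup_i\lambda_\ell(X_i)<\infty$, so Theorem~\ref{bbbg} allows us, up to a subsequence, to extract $L^2$-strong limits $\phi_\ell\in H^{1,2}(X,\dist,\meas)$ with $\Ch(\phi_\ell)\le\liminf_i\lambda_\ell(X_i)$. The $L^2$-orthonormality is preserved in the limit (compactness of $X$ ensures no mass is lost), so $\Span\{\phi_0,\dots,\phi_j\}$ is a $(j+1)$-dimensional subspace of $H^{1,2}(X,\dist,\meas)$ on which the Rayleigh quotient is at most $\liminf_i\lambda_j(X_i)$. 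The variational characterization on $X$ then gives $\lambda_j(X,\dist,\meas)\le\liminf_i\lambda_j(X_i,\dist_i,\meas_i)$.

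\textbf{Step 3: $H^{1,2}$-strong convergence of eigenfunctions.} Given $\phi_j$ with $\Delta\phi_j+\lambda_j(X,\dist,\meas)\phi_j=0$, apply Step~1 to obtain $\tilde{\phi}_{j,i}\to\phi_j$ $H^{1,2}$-strongly. Let $m$ be the multiplicity of $\lambda_j(X,\dist,\meas)$; by Steps 1--2 applied to every index in the cluster, the sum of eigenspaces of $X_i$ whose eigenvalues converge to $\lambda_j(X,\dist,\meas)$ has dimension exactly $m$ for large $i$, and any $L^2$-orthonormal basis of it $L^2$-subconverges to an $L^2$-orthonormal basis of the $\lambda_j$-eigenspace on $X$. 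Define $\phi_{j,i}$ to be the $L^2$-orthogonal projection of $\tilde{\phi}_{j,i}$ onto this spectral subspace of $X_i$; then $\phi_{j,i}-\tilde{\phi}_{j,i}\to 0$ in $L^2$, hence $\phi_{j,i}\to\phi_j$ $L^2$-strongly, and the identity $\Ch_i(\phi_{j,i})=\lambda_j(X_i)\|\phi_{j,i}\|_{L^2}^2$ together with the eigenvalue convergence gives $\lim_i\Ch_i(\phi_{j,i})=\lambda_j(X,\dist,\meas)\|\phi_j\|_{L^2}^{2}=\Ch(\phi_j)$, which is exactly $H^{1,2}$-strong convergence.

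The main obstacle is Step~3: one must match the multiplicity of $\lambda_j(X)$ with the cluster of nearby eigenvalues on $X_i$, because for fixed $i$ the eigenvalue $\lambda_j(X_i)$ need not share the same multiplicity as $\lambda_j(X)$. The argument therefore cannot just pick a single eigenspace on $X_i$; instead one needs the spectral-window projection above, and to justify that the dimension of this projected space is stable, which uses both Step~1 applied at every index of the cluster and Step~2 to prevent the cluster from collapsing. Once this bookkeeping is done, the rest is routine manipulation of Rayleigh quotients.
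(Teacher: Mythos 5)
The paper does not supply its own proof of Theorem~\ref{thm:spectral}: it is quoted from \cite[Thm.7.8]{GigliMondinoSavare13} and presented as a corollary of heat-flow convergence \cite[Thm.5.7]{GigliMondinoSavare13}, so there is no in-paper argument to compare against. That said, your min-max reconstruction, using heat-flow recovery sequences for the $\limsup$ side and Rellich compactness (Theorem~\ref{bbbg}) for the $\liminf$ side, is a reasonable route, and Steps~1--2, which establish the eigenvalue convergence $\lambda_j(X_i,\dist_i,\meas_i)\to\lambda_j(X,\dist,\meas)$, are essentially sound.

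Step~3, however, has a genuine gap. The $L^2$-orthogonal projection of $\tilde{\phi}_{j,i}$ onto the cluster spectral subspace of $X_i$ (the direct sum of the $\lambda_k(X_i,\dist_i,\meas_i)$-eigenspaces for $k$ in the cluster) is in general a linear combination of eigenfunctions with several \emph{distinct} eigenvalues. Consequently $\phi_{j,i}$ need not solve $\Delta_i\phi_{j,i}+\lambda_j(X_i,\dist_i,\meas_i)\phi_{j,i}=0$ as the theorem requires, and the asserted identity $\Ch_i(\phi_{j,i})=\lambda_j(X_i,\dist_i,\meas_i)\|\phi_{j,i}\|_{L^2}^2$ is false; the correct computation gives $\Ch_i(\phi_{j,i})=\sum_k c_{k,i}^2\,\lambda_k(X_i,\dist_i,\meas_i)$ with the sum over the cluster. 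That error is harmless for the $H^{1,2}$-convergence conclusion (since all cluster eigenvalues converge to $\lambda_j(X,\dist,\meas)$), but it does not produce an exact $\lambda_j(X_i,\dist_i,\meas_i)$-eigenfunction. Moreover, manufacturing such an eigenfunction converging to a prescribed $\phi_j$ is delicate when $\lambda_j(X,\dist,\meas)$ is degenerate and the cluster genuinely splits along the sequence: think of nearly-round ellipsoids converging to the round sphere, where the $\lambda_1(X_i,\dist_i,\meas_i)$-eigenspace is one-dimensional and converges to a fixed coordinate line, so a generic linear combination of coordinate functions cannot arise as a limit of $\lambda_1(X_i,\dist_i,\meas_i)$-eigenfunctions. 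It is informative that the paper's own application of the theorem, in the proof of Proposition~\ref{prop:appl}, only requires approximating eigenfunctions $\hat{\phi}_{j,i}$ with \emph{some} eigenvalues $\lambda_{j,i}\to\lambda_j$, not necessarily the $j$-th eigenvalue of $X_i$; that weaker conclusion is what a cluster-projection argument can actually reach once one further selects a single cluster eigenfunction from the limiting basis, and the literal requirement of eigenvalue exactly $\lambda_j(X_i,\dist_i,\meas_i)$ needs more care than the projection provides.
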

Let us recall the following stability results proved in \cite[Thm.4.4]{AmbrosioHonda2}.
\begin{theorem}[Stability of Laplacian on balls]\label{spectral2}
Let $f_i \in D(\Delta_i, B_R(x_i))$ satisfy 
$$
\sup_i(\|f_i\|_{H^{1, 2}(B_R(x_i))}+\|\Delta_i f_i\|_{L^2(B_R(x_i))})<\infty,
$$
and let us assume that $f_i$ $L^2$-strongly converge to $f \in L^2(B_R(x), \meas)$ on $B_R(x)$ (so that, by Theorem~\ref{bbbg}, 
$f \in H^{1, 2}(B_R(x), \dist, \meas)$). Then we have the following. 
\begin{enumerate}
\item[(1)] $f \in D(\Delta, B_R(x))$.
\item[(2)] $\Delta_if_i$ $L^2$-weakly converge to $\Delta f$ on $B_R(x)$. 
\item[(3)] $f_i$ $H^{1, 2}$-strongly converge to $f$ on $B_r(x)$ 
for any $r<R$. 
\end{enumerate}
\end{theorem}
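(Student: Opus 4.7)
The three conclusions correspond to three phases: extracting a weak limit of $\Delta_i f_i$, identifying it with $\Delta f$, and upgrading the gradient convergence from weak to strong on $B_r(x)$. Note that Theorem~\ref{bbbg}, applied to the bounded sequence $f_i$, already supplies $L^2$-strong convergence of $f_i$ to $f$ on each $B_r(x)$ with $r<R$, together with $L^2$-weak convergence of $\nabla_i f_i$ to $\nabla f$ on such balls and the lower-semicontinuity $\liminf_i\|\nabla_i f_i\|_{L^2(B_r(x_i))}\ge\|\nabla f\|_{L^2(B_r(x))}$.

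For (1) and (2), the uniform $L^2$-bound on $\Delta_i f_i$ lets us extract a subsequence along which $\Delta_i f_i$ $L^2$-weakly converges on $B_R(x)$ to some $h \in L^2(B_R(x),\meas)$. To identify $h$, fix any $\phi\in\Lip(X,\dist)$ with compact support in $B_R(x)$, choose Lipschitz lifts $\phi_i$ on $X_i$ with compact support in $B_R(x_i)$ converging to $\phi$ in $H^{1,2}$-strong (standard in the pmGH setting), and pass to the limit in
\[
\int\langle\nabla_i f_i,\nabla_i\phi_i\rangle\di\meas_i=-\int\phi_i\Delta_i f_i\di\meas_i.
\]
The left side converges to $\int\langle\nabla f,\nabla\phi\rangle\di\meas$ by the weak-strong pairing of weakly converging $\nabla_i f_i$ against strongly converging $\nabla_i\phi_i$, while the right side converges to $-\int\phi h\di\meas$ by weak-strong pairing of $\Delta_i f_i$ against $\phi_i$. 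Thus $f\in D(\Delta,B_R(x))$ with $\Delta f=h$, and uniqueness of the limit upgrades the convergence to the full sequence.

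For (3), fix $r<r'<R$ and choose a Lipschitz cut-off $\eta$ on $X$ supported in $B_{r'}(x)$, equal to $1$ on $B_r(x)$, of bounded Laplacian (a \emph{good cut-off}, available on $\RCD(K,N)$ spaces by the Mondino-Naber construction). Take lifts $\eta_i$ on $X_i$ sharing these properties with $\eta_i\to\eta$ in $H^{1,2}$-strong and $\Delta_i\eta_i$ uniformly bounded in $L^\infty$ and $L^2$-weakly convergent to $\Delta\eta$. Using $\eta_i f_i$ as a test function on $B_R(x_i)$ (valid since it lies in $H^{1,2}$ with compact support there) yields
\[
\int\eta_i|\nabla_i f_i|^2\di\meas_i=-\int\eta_i f_i\,\Delta_i f_i\di\meas_i-\int f_i\langle\nabla_i\eta_i,\nabla_i f_i\rangle\di\meas_i.
\]
Rewriting the cross term as $\tfrac{1}{2}\langle\nabla_i\eta_i,\nabla_i(f_i^2)\rangle$ and integrating by parts against the good cut-off gives $-\tfrac{1}{2}\int f_i^2\Delta_i\eta_i\di\meas_i$. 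Each resulting integral is a weak-strong pairing (strong $L^2$-convergence of $\eta_i f_i$ and $f_i^2$, weak $L^2$-convergence of $\Delta_i f_i$ and $\Delta_i\eta_i$), so passing to the limit and applying the analogous identity on $(X,\dist,\meas)$ delivers $\int\eta_i|\nabla_i f_i|^2\di\meas_i\to\int\eta|\nabla f|^2\di\meas$. Since $\eta\equiv 1$ on $B_r(x)$, combining with the lower semicontinuity gives $\|\nabla_i f_i\|_{L^2(B_r(x_i))}\to\|\nabla f\|_{L^2(B_r(x))}$, i.e.\ $H^{1,2}$-strong convergence on $B_r(x)$.

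The main obstacle is the cross-term manipulation in step (3): the identity $f_i\langle\nabla_i\eta_i,\nabla_i f_i\rangle=\tfrac{1}{2}\langle\nabla_i\eta_i,\nabla_i(f_i^2)\rangle$ followed by integration by parts requires $f_i^2\in H^{1,2}$ with uniform control, hence some local $L^\infty$-bound on $f_i$. This is obtained either from local elliptic regularity (given $\Delta_i f_i \in L^2$) or via a truncation $f_i^M:=\min(\max(f_i,-M),M)$ with a subsequent $M\to\infty$ limit, using dominated convergence and the uniform $H^{1,2}$-bound. The choice of good cut-offs $\eta_i$ satisfying both $H^{1,2}$-strong convergence and uniformly bounded Laplacians is the decisive technical input; once available, everything else is bookkeeping of weak-strong pairings.
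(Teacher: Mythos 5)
The paper itself gives no proof of Theorem \ref{spectral2}: it is quoted from \cite[Thm.4.4]{AmbrosioHonda2}, so there is no internal argument to compare against. Your outline reproduces the standard proof from that reference: extract an $L^2$-weak limit $h$ of $\Delta_i f_i$, identify $h=\Delta f$ by testing against compactly supported Lipschitz functions lifted so as to converge $H^{1,2}$-strongly, and upgrade to $H^{1,2}$-strong convergence of $f_i$ on interior balls via good cut-offs with uniformly bounded Laplacian and the localized energy identity; the weak--strong pairing lemmas and the existence of $H^{1,2}$-strongly convergent compactly supported lifts are exactly the inputs used there. One small bookkeeping point: the cut-off argument only yields $\limsup_i\|\nabla_i f_i\|^2_{L^2(B_r(x_i))}\le\|\nabla f\|^2_{L^2(B_{r'}(x))}$, so you still need to send $r'\downarrow r$ (using $\meas(\partial B_r(x))=0$, which follows from continuity of $r\mapsto\meas(B_r(x))$) before combining with lower semicontinuity.

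The one sub-claim that is actually wrong is the first of your two proposed fixes for the cross term: local elliptic regularity does \emph{not} give an $L^\infty_{\mathrm{loc}}$ bound on $f_i$ from $\Delta_i f_i\in L^2$ once $N\ge 4$, since De Giorgi--Nash--Moser requires $\Delta f\in L^p$ with $p>N/2$ (already in $\mathbb{R}^4$ a $W^{2,2}$ function need not be bounded). Your second fix --- truncating $f_i$ at level $M$, running the identity for the truncation, and letting $M\to\infty$ with dominated convergence --- is the one to rely on; note also that for the limit $\int f_i^2\Delta_i\eta_i\di\meas_i\to\int f^2\Delta\eta\di\meas$ you only need $L^1$-strong convergence of $f_i^2$ paired against the uniformly $L^\infty$-bounded $\Delta_i\eta_i$, not $L^2$-strong convergence as written. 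Alternatively, you can avoid the rewriting $f_i\langle\nabla_i\eta_i,\nabla_i f_i\rangle=\tfrac12\langle\nabla_i\eta_i,\nabla_i(f_i^2)\rangle$ altogether by treating $\int f_i\langle\nabla_i\eta_i,\nabla_i f_i\rangle\di\meas_i$ directly as the pairing of the $L^2$-weakly convergent vector fields $\nabla_i f_i$ against the $L^2$-strongly convergent vector fields $f_i\nabla_i\eta_i$ (strong convergence of the latter following from the $L^2$-strong convergence of $f_i$ together with the $H^{1,2}$-strong convergence and uniform Lipschitz bound of $\eta_i$); this removes any need for integrability of $f_i^2$ beyond $L^1$ and makes the truncation unnecessary.
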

Note that in Theorem \ref{spectral2} if $R=\infty$, then the $H^{1, 2}$-strong convergence of $f_i$ to $f$ is satisfied on $B_R(x)=X$. See \cite[Cor.10.4]{AmbrosioHonda}.

Finally let us mention the following result, where (\ref{3433}) is already proved in \cite[Thm.1.5]{Kita} by a different way (see also \cite[Rem.5.20]{AHPT} and \cite[Prop.3.78]{Honda2}).
\begin{proposition}[$L^2_{\mathrm{loc}}$-weak convergence of Riemannian metrics]\label{weakriem}
Assume $R<\infty$. Then $g_{X_i}$ $L^2$-weakly converge to $g_X$ on $B_R(x)$. In particular we see that
\begin{equation}\label{3433}
\liminf_{i \to \infty}\dim_{\dist_i, \meas_i}(X_i) \ge \dim_{\dist, \meas}(X)
\end{equation}
holds and that $g_{X_i}$ $L^2$-strongly converge to $g_X$ on $B_R(x)$  if and only if $\dim_{\dist_i, \meas_i}(X_i) = \dim_{\dist, \meas}(X)$ holds for any sufficiently large $i$.
\end{proposition}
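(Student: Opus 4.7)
The plan is to verify the $L^2$-weak convergence directly from the definition by testing against the algebraic generators $\dist f_1 \otimes \dist f_2$ of the space of $L^2$-tensors, and then to use Proposition~\ref{prop:low} together with the identity $|g_X|^2 = \dim_{\dist,\meas}(X)$ from Proposition~\ref{Riemdef} to read off both the dimension inequality \eqref{3433} and the characterization of strong convergence.

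First I would establish the uniform $L^2$-bound: since $|g_{X_i}|^2 = \dim_{\dist_i,\meas_i}(X_i) \le N$ $\meas_i$-a.e., we have $\|g_{X_i}\|_{L^2(B_R(x_i))}^2 \le N\meas_i(B_R(x_i))$, which is uniformly bounded since $\meas_i(B_R(x_i)) \to \meas(B_R(x))<\infty$ under pmGH convergence. Next, take sequences $f_{j,i} \in \mathrm{Test}F(X_i,\dist_i,\meas_i)$ that $L^2$-strongly converge to $f_j \in \mathrm{Test}F(X,\dist,\meas)$ with the uniform bounds required in the definition of convergence of tensors. By Theorem~\ref{spectral2}, these converge $H^{1,2}$-strongly on $X$, and then Theorem~\ref{bbbg} gives $L^1_{\mathrm{loc}}$-strong convergence of $|\nabla_i f_{j,i}|^2$ to $|\nabla f_j|^2$. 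Polarization yields $\langle \nabla_i f_{1,i}, \nabla_i f_{2,i}\rangle \to \langle \nabla f_1, \nabla f_2\rangle$ in $L^1_{\mathrm{loc}}$, and combining with the uniform $L^\infty$-bound on the test gradients (hence on the inner product) upgrades this to $L^2$-strong convergence on $B_R(x)$. Since $\langle g_{X_i}, \dist f_{1,i}\otimes\dist f_{2,i}\rangle = \langle \nabla_i f_{1,i}, \nabla_i f_{2,i}\rangle$ and similarly on the limit by Proposition~\ref{Riemdef}, the $L^2$-weak convergence of $g_{X_i}$ to $g_X$ on $B_R(x)$ follows.

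The dimension inequality \eqref{3433} is then immediate from Proposition~\ref{prop:low}: choosing $R$ to be a continuity radius of $\meas(\plchldr)(x)$ (so $\meas_i(B_R(x_i)) \to \meas(B_R(x)) > 0$) and using $\|g_{X_i}\|_{L^2(B_R(x_i))}^2 = \dim_{\dist_i,\meas_i}(X_i)\,\meas_i(B_R(x_i))$ and the analogous identity for the limit, the lower semicontinuity gives
\begin{equation}
\liminf_{i\to\infty}\dim_{\dist_i,\meas_i}(X_i)\,\meas_i(B_R(x_i)) \ge \dim_{\dist,\meas}(X)\,\meas(B_R(x)),
\end{equation}
and dividing by the common positive limit of $\meas_i(B_R(x_i))$ yields \eqref{3433}. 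For the final equivalence, $L^2$-strong convergence is, by definition, $L^2$-weak convergence together with $\limsup_{i}\|g_{X_i}\|_{L^2(B_R(x_i))} \le \|g_X\|_{L^2(B_R(x))}$; since we already have the opposite $\liminf$-inequality, strong convergence is equivalent to $\lim_i \dim_{\dist_i,\meas_i}(X_i)\meas_i(B_R(x_i)) = \dim_{\dist,\meas}(X)\meas(B_R(x))$, and integrality of the essential dimension together with the convergence of measures forces this to be equivalent to $\dim_{\dist_i,\meas_i}(X_i) = \dim_{\dist,\meas}(X)$ for all sufficiently large $i$.

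The only mildly delicate point I anticipate is the initial reduction, namely showing that a single compatible test pair $(f_{1,i}, f_{2,i}) \to (f_1,f_2)$ exists in sufficient supply to characterize the limit tensor; this is standard since $\mathrm{Test}F$ is dense and test functions on $(X,\dist,\meas)$ can be lifted back via Theorem~\ref{spectral2} (in the form used in \cite{AmbrosioHonda2}). Apart from this bookkeeping and the choice of a continuity radius for the measure, the argument is a direct assembly of the preparatory results recalled above.
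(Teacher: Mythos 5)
Your proof is correct and follows the same route as the paper's (very terse) argument: the $L^2$-weak convergence of $g_{X_i}$ to $g_X$ comes from Theorems~\ref{bbbg} and~\ref{spectral2} applied to test pairings, the dimension inequality \eqref{3433} from Proposition~\ref{Riemdef} (which identifies $\meas_i(B_R(x_i))^{-1}\int_{B_R(x_i)}|g_{X_i}|^2\di\meas_i$ with $\dim_{\dist_i,\meas_i}(X_i)$) together with Proposition~\ref{prop:low}, and the strong-convergence characterization by combining the definition of $L^2$-strong convergence with the $\liminf$ bound and integrality of the essential dimension. Your aside about choosing a continuity radius is unnecessary here: $R$ is fixed in the statement, and under pmGH convergence of $\RCD(K,N)$ spaces one has $\meas_i(B_R(x_i))\to\meas(B_R(x))$ for every $R>0$, which is what the paper's computation tacitly uses.
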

\begin{proof}
The desired $L^2$-weak convergence is a direct consequence of Theorems \ref{bbbg} and \ref{spectral2}. Moreover Propositions \ref{Riemdef} and \ref{prop:low} yield
\begin{align}
\liminf_{i \to \infty}\dim_{\dist_i, \meas_i}(X_i) &=\liminf_{i \to \infty}\frac{1}{\meas_i(B_R(x_i))}\int_{B_R(x_i)}|g_{X_i}|^2\di \meas_i \nonumber \\
&\ge \frac{1}{\meas (B_R(x))}\int_{B_R(x)}|g_X|^2\di \meas=\dim_{\dist, \meas}(X).
\end{align}
The remaining statement is trivial from this observation.
\end{proof}
\subsection{Splitting theorem via splitting map}
We say that a map $\gamma$ from $\mathbb{R}$ to a metric space $(Z, \dist_Z)$ is a \textit{line} if it is an isometric embedding as metric spaces, that is, $\dist_Z(\gamma(s), \gamma(t))=|s-t|$ holds for all $s,t \in \mathbb{R}$. Then the \textit{Busemann function of $\gamma$}, $b_{\gamma}:Z \to \mathbb{R}$ is defined by
\begin{equation}
b_{\gamma}(x):=\lim_{t\to \infty}\left(t-\dist_Z(\gamma(t), x)\right).
\end{equation} 
Let us introduce an important result on the $\RCD$ theory, so-called the \textit{splitting theorem}, proved in \cite[Thm.1.4]{Gigli13} (see also \cite[Lem.1.21]{ABS}):
\begin{theorem}[Splitting theorem]\label{splitting}
Let $(X, \dist, \meas)$ be an $\RCD(0, N)$ space and let $x \in X$. Assume that the following (1) or (2) holds.
\begin{enumerate}
\item There exist lines $\gamma_i:\mathbb{R}\to X (i=1, 2, \ldots, k)$ such that $\gamma_i(0)=x$ and
\begin{equation}
\int_{B_1(x)}b_{\gamma_i}b_{\gamma_j}\di \meas =0,\quad \forall i \neq j
\end{equation}
are satisfied. 
\item There exist harmonic functions $f_i:X \to \mathbb{R} (i=1, 2, \ldots, k)$ such that $f_i(x)=0$ and $\langle \dist f_i, \dist f_j\rangle \equiv \delta_{ij}$ are satisfied.
\end{enumerate}
Let us put $\phi_i:=b_{\gamma_i}$ if (1) holds, $\phi_i:=f_i$ if (2) holds. 
Then there exist a pointed $\RCD(K, N-k)$ space $(Y, \dist_Y, y, \meas_Y)$ and an isometry
\begin{equation}
\Phi:(X, \dist, x, \meas) \to \left(\mathbb{R}^k\times Y, \sqrt{\dist_{\mathbb{R}^k}^2+\dist_Y^2}, (0_k, y), \mathcal{H}^k\otimes \meas_Y \right)
\end{equation}
such that $\phi_i \equiv \pi_i\circ \Phi$ holds, where $\pi_i:\mathbb{R}^k \times Y \to \mathbb{R}$ is the projection to the $i$-th $\mathbb{R}$ of the Euclidean factor $\mathbb{R}^k$.
\end{theorem}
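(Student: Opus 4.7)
The plan is to reduce both hypotheses (1) and (2) to the same pointwise data---namely $k$ harmonic functions with orthonormal parallel gradient frames---and then peel off $\mathbb{R}$-factors one at a time using the one-line case of the splitting theorem as a black box.

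First I would treat (2) directly. Given harmonic $f_1,\ldots,f_k$ with $\langle \dist f_i,\dist f_j\rangle\equiv \delta_{ij}$, the Bochner inequality \eqref{eq:bochner} applied to $f_i$ together with $|\nabla f_i|^2\equiv 1$, $\Delta f_i\equiv 0$, and $K=0$ yields
\begin{equation}
\int_X \phi\,|\mathrm{Hess}_{f_i}|^2\di\meas\le 0
\end{equation}
for every admissible $\phi\ge 0$, so $\mathrm{Hess}_{f_i}=0$ $\meas$-a.e. Consequently $\dist f_i$ is parallel in the second-order calculus. For the flow $\Phi^1_t$ generated by $\nabla f_1$ one has $f_1(\Phi^1_t(y))=f_1(y)+t$ (so $\dist(\Phi^1_t y,\Phi^1_s y)\ge |t-s|$) and $|\nabla f_1|(\Phi^1_t y)\equiv 1$ (so the reverse inequality also holds by unit speed). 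Thus $t\mapsto \Phi^1_t(x)$ is an isometric line in $(X,\dist)$, and I can invoke Gigli's one-line splitting theorem \cite{Gigli13} to obtain $X\cong \mathbb{R}\times Y_1$ under an isometry sending $f_1$ to the projection to the first factor.

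For the induction, the remaining functions $f_i$ ($i\ge 2$) stay harmonic with parallel unit gradient on $\mathbb{R}\times Y_1$, so they take the form $f_i(x_1,y)=a_{i1}x_1+h_i(y)$ with $h_i$ harmonic and parallel on $Y_1$. The orthogonality $\langle\dist f_1,\dist f_i\rangle\equiv 0$ forces $a_{i1}=0$, and $\langle \dist h_i,\dist h_j\rangle=\delta_{ij}$ descends unchanged to $Y_1$. Induction on $k$ then produces the desired splitting $X\cong \mathbb{R}^k\times Y$ with $\phi_i=\pi_i\circ\Phi$, and the $\RCD(0,N-k)$ structure on $Y$ comes from the stability of the synthetic conditions under isometric splittings (as in \cite{Gigli13}, together with the obvious dimension accounting in the volume factorization $\meas=\mathcal{H}^k\otimes\meas_Y$).

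To handle (1), I would apply the one-line theorem to $\gamma_1$ first, obtaining harmonic $b_{\gamma_1}$ with $|\nabla b_{\gamma_1}|\equiv 1$ and $\mathrm{Hess}_{b_{\gamma_1}}=0$, and splitting $X\cong \mathbb{R}\times Y_1$ with $b_{\gamma_1}=x_1$ and $x=(0,y_0)$. Each $b_{\gamma_j}$ ($j\ge 2$) is a parallel harmonic function on $\mathbb{R}\times Y_1$, hence of the form $b_{\gamma_j}=a_j x_1+h_j(y)$. Using the $x_1\mapsto -x_1$ symmetry of $B_1(x)$ and of the product measure,
\begin{equation}
0=\int_{B_1(x)} b_{\gamma_1}\,b_{\gamma_j}\di\meas = a_j\int_{B_1(x)} x_1^2\di\meas + \int_{B_1(x)} x_1 h_j(y)\di\meas = a_j\int_{B_1(x)} x_1^2\di\meas,
\end{equation}
which forces $a_j=0$. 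Thus each $b_{\gamma_j}$ descends to a harmonic function $h_j$ on $Y_1$ with unit gradient, and a Gram--Schmidt applied to the residual integral orthogonalities reduces the data on $Y_1$ to $(k-1)$ harmonic functions satisfying the hypotheses of (2). Iterating gives the conclusion.

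The main obstacle is the passage from the analytic statement $\mathrm{Hess}_{f_i}=0$ to the metric statement that $\nabla f_i$ generates a genuine Euclidean line in $(X,\dist)$ (not merely a unit-speed local isometric embedding), and that the joint quotient $Y=\{f_1=\cdots=f_k=0\}$ carries a metric measure structure for which $\Phi$ is a global, not just local, isometry. Once the $k=1$ case is taken from \cite{Gigli13}, the inductive propagation sketched above handles the rest.
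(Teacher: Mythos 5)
The paper does not actually prove Theorem~\ref{splitting}; it states it with references to \cite[Thm.1.4]{Gigli13} and \cite[Lem.1.21]{ABS}. So your proposal is a reconstruction of a cited result, and its overall architecture (reduce both hypotheses to parallel unit gradients, peel off $\mathbb{R}$-factors one at a time, invoke the one-line splitting theorem as a black box) is the standard and appropriate one. The treatment of hypothesis~(2), modulo the unavoidable technicalities about regular Lagrangian flows and identifying $f_1$ with the Busemann function of the line it generates, is sound.

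There is, however, a genuine gap in the reduction of hypothesis~(1), and it is precisely in the step you call ``a Gram--Schmidt applied to the residual integral orthogonalities.'' Gram--Schmidt replaces the $h_j$ by new linear combinations $\tilde h_j$ with pointwise orthonormal gradients, but the theorem's conclusion requires $\phi_i=b_{\gamma_i}=\pi_i\circ\Phi$ for the \emph{original} Busemann functions, which have orthonormal coordinate gradients built in. If the constant Gram matrix $G_{ij}=\langle\nabla h_i,\nabla h_j\rangle$ were not already the identity, Gram--Schmidt would produce a splitting in which the $b_{\gamma_i}$ are nontrivial linear combinations of the coordinates rather than the coordinates themselves, and the stated conclusion would fail. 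What must actually be shown is that the integral orthogonality over $B_1(x)$ \emph{forces} $G=I$. This does hold, but the mechanism is the reflection/rotation symmetry of the product ball, not Gram--Schmidt: after the split $X\cong\mathbb{R}\times Y_1$ with $x=(0,y_0)$, each $h_j$ (having vanishing Hessian and unit gradient) is an affine function of the Euclidean coordinates in the maximal splitting of $Y_1$, and the weighted integral
\begin{equation}
\int_{B_1((0,y_0))} h_i h_j \di(\mathcal{H}^1\otimes\meas_{Y_1})=\int_{Y_1} h_i h_j\,\bigl(2\sqrt{1-\dist_{Y_1}(y,y_0)^2}\bigr)_+\di\meas_{Y_1}
\end{equation}
is a positive multiple of $G_{ij}$ because the weight is rotationally invariant in the Euclidean directions. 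A related but smaller point: even once $a_j=0$ is known, the residual orthogonality lives in the weighted $L^2$ space just displayed rather than in $L^2(B_1(y_0),\meas_{Y_1})$, so one does not literally land back in hypothesis~(1) on $Y_1$; rather one should prove $G=I$ and then appeal to case~(2) directly. Replacing the Gram--Schmidt step by this symmetry argument closes the gap.
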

Note that quantitative versions of Theorem \ref{splitting} are also justified. See \cite[Thm.1.5]{Gigli13} and \cite[Prop.3.7 and 3.9]{BPS2} (see also \cite[Prop.1.4 and 1.5]{BPS}). 
Let us end this subsection by giving the following result which is a variant of quantitative versions of Theorem \ref{splitting}, which will be used later.
\begin{theorem}\label{highregular}
Let $\delta \in (0, 1)$, let $C, L \in (0, \infty)$, let $k \in \mathbb{N}$, let $(X, \dist, x, \meas)$ be a pointed $\RCD(-\delta, N)$ space with $n=\dim_{\dist, \meas}(X)$. Assume that there exist $\phi_i \in D(\Delta, B_L(x))(i=1, 2, \ldots, k)$ such that  $|\nabla \phi_i| \le C$ on $B_L(x)$, 
\begin{equation}
\frac{1}{\meas (B_1(x))}\int_{B_L(x)}(\Delta \phi_i)^2\di \meas <\delta,
\end{equation}
and
\begin{equation}
\frac{1}{\meas (B_{1}(x))}\int_{B_{L}(x)}\left| \sum_{i=1}^k\dist \phi_i \otimes \dist \phi_i-g_X\right| \di \meas<\delta
\end{equation}
are satisfied.
%\item We have for any $i \in \{1, 2, \ldots, k+l\}$,
%\begin{equation}
%\frac{1}{\meas (B_1(x))}\int_{B_L(x)}(\Delta \phi_i)^2\di \meas <\delta.
%\end{equation}
%\item We have for all $i, j \in \{1, 2, \ldots, k\}$,
%\begin{equation}
%\frac{1}{\meas (B_1(x))}\int_{B_L(x)}\left| \langle \nabla \phi_i, \nabla \phi_j\rangle  -\delta_{ij}\right| \di \meas <\delta.
%\end{equation}
%\item We have for all $i \in \{1, 2, \ldots, k+l\}, j \in \{k+1, \ldots, k+l\}$
%\begin{equation}
%\frac{1}{\meas (B_1(x))}\int_{B_L(x)}\left| \langle \nabla \phi_i, \nabla \phi_j\rangle \right| \di \meas <\delta.
%\end{equation}
%\end{enumerate}
Then we see that $(X, \dist, x, \meas(B_1(x))^{-1}\meas)$ is $\Psi(\delta, L^{-1};N, k, C)$-pmGH close to $(\mathbb{R}^n, \dist_{\mathbb{R}^n}, 0_n, \omega_n^{-n}\mathcal{H}^n)$ and that $\Phi=(\phi_1, \ldots, \phi_{k}):B_1(x) \to \mathbb{R}^{k}$ gives a $\Psi(\delta, L^{-1};N, k, C)$-GH approximation map to the image.
\end{theorem}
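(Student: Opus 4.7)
The plan is a compactness-and-contradiction argument that terminates in an application of Gigli's splitting theorem to a pmGH-limit. Suppose the conclusion fails; then there exist $\epsilon_0>0$ and a sequence of pointed $\RCD(-\delta_j,N)$ spaces $(X_j,\dist_j,x_j,\meas_j)$ with essential dimensions $n_j$, parameters $\delta_j\to 0$ and $L_j\to\infty$, and regular maps $\Phi_j=(\phi_{1,j},\ldots,\phi_{k,j})$ satisfying the hypotheses at parameter $\delta_j$, such that either $(X_j,\dist_j,x_j,\meas_j(B_1(x_j))^{-1}\meas_j)$ is $\epsilon_0$-pmGH-far from $(\mathbb{R}^{n_j},\dist_{\mathbb{R}^{n_j}},0_{n_j},\omega_{n_j}^{-1}\mathcal{H}^{n_j})$, or $\Phi_j$ is not an $\epsilon_0$-GH approximation to its image on $B_1(x_j)$.

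Set $\tilde{\meas}_j:=\meas_j(B_1(x_j))^{-1}\meas_j$ and invoke Theorem~\ref{hohohonoho}: along a subsequence, $(X_j,\dist_j,x_j,\tilde{\meas}_j)$ pmGH-converges to a pointed $\RCD(0,N)$ space $(X_\infty,\dist_\infty,x_\infty,\meas_\infty)$ of essential dimension $n_\infty$. Subtracting the centering constants $\phi_{i,j}(x_j)$ and using the uniform $L^\infty$ gradient bound and the normalized $L^2$ smallness of $\Delta\phi_{i,j}$, the functions $\phi_{i,j}$ are uniformly $H^{1,2}$-bounded (with their Laplacians) on every fixed ball. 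Theorems~\ref{bbbg} and \ref{spectral2} then provide, along a further subsequence, limits $\phi_{i,\infty}\in D(\Delta,B_R(x_\infty))$ for every $R$ with $H^{1,2}$-strong convergence on each ball and $\Delta\phi_{i,j}\to\Delta\phi_{i,\infty}$ $L^2$-weakly; vanishing of the normalized $L^2$-norms of $\Delta\phi_{i,j}$ forces $\phi_{i,\infty}$ to be harmonic on $X_\infty$.

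Polarizing the last statement of Theorem~\ref{bbbg} yields $L^1$-strong convergence of $\langle\nabla\phi_{i,j},\nabla\phi_{l,j}\rangle$ on each ball. Combining with the tensor hypothesis $\|\sum_i\dist\phi_{i,j}\otimes\dist\phi_{i,j}-g_{X_j}\|_{L^1(B_{L_j},\tilde{\meas}_j)}\to 0$ and the $L^2$-weak convergence $g_{X_j}\to g_{X_\infty}$ from Proposition~\ref{weakriem}, I obtain in the limit
\begin{equation}
\sum_{i=1}^{k}\dist\phi_{i,\infty}\otimes\dist\phi_{i,\infty}=g_{X_\infty}\quad\text{on }X_\infty,
\end{equation}
whose pointwise Hilbert--Schmidt trace gives $\sum_i|\nabla\phi_{i,\infty}|^2=n_\infty$ a.e. Testing the Bochner inequality~\eqref{eq:bochner} for each harmonic $\phi_{i,\infty}$ against a compactly supported $\phi'\in D(\Delta)\cap L^\infty(X_\infty)$ with $\phi'\ge 0$ and summing over $i$ yields $\int\phi'\sum_i|\mathrm{Hess}_{\phi_{i,\infty}}|^2\,\di\meas_\infty\le\frac{n_\infty}{2}\int\Delta\phi'\,\di\meas_\infty=0$; an exhaustion by good cutoffs on the $\RCD(0,N)$ space forces $\mathrm{Hess}_{\phi_{i,\infty}}\equiv 0$ for every $i$. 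Consequently the Gram matrix $A=(\langle\nabla\phi_{i,\infty},\nabla\phi_{l,\infty}\rangle)_{i,l}$ is a constant symmetric idempotent of rank $n_\infty$; diagonalizing by an orthogonal $O$ and setting $\psi_s:=\sum_i O_{si}\phi_{i,\infty}$ for $s=1,\ldots,n_\infty$ gives global harmonic functions with $\langle\nabla\psi_s,\nabla\psi_t\rangle=\delta_{st}$. Theorem~\ref{splitting}(2) then produces an isometric splitting $X_\infty\cong\mathbb{R}^{n_\infty}\times Y$ with $Y$ an $\RCD(0,N-n_\infty)$ space; the identity $|g_{X_\infty}|^2=n_\infty$ forces $\dim Y=0$, so $X_\infty$ is isometric to $(\mathbb{R}^{n_\infty},\dist_{\mathbb{R}^{n_\infty}},0_{n_\infty},\omega_{n_\infty}^{-1}\mathcal{H}^{n_\infty})$.

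To close the contradiction, the $L^1$-strong convergence of $\sum_i\dist\phi_{i,j}\otimes\dist\phi_{i,j}$ together with the tensor hypothesis yields $\|g_{X_j}\|_{L^1(B_1,\tilde{\meas}_j)}\to\|g_{X_\infty}\|_{L^1(B_1,\meas_\infty)}$, i.e.\ $\sqrt{n_j}\to\sqrt{n_\infty}$; since the dimensions are integers we get $n_j=n_\infty$ eventually, contradicting the pmGH-farness from $\mathbb{R}^{n_j}$. For the GH-approximation alternative, in the limit $\Phi_\infty$ is affine on $\mathbb{R}^{n_\infty}$ and its linear part is an isometric embedding into $\mathbb{R}^k$ (by the tight-frame identity $A^{T}A=I_{n_\infty}$), so $\Phi_j$ must be a $\Psi$-GH approximation on $B_1(x_j)$ for large $j$, giving the second contradiction. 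The main technical obstacle is establishing $\mathrm{Hess}_{\phi_{i,\infty}}\equiv 0$ on the possibly noncompact $X_\infty$; this requires a cutoff family $\phi'_R\in D(\Delta)\cap L^\infty$ for which $\int\Delta\phi'_R\,\di\meas_\infty\to 0$ while $\phi'_R\nearrow 1$, an argument that leans on the polynomial volume growth afforded by the $\RCD(0,N)$ structure and on standard good-cutoff constructions in the $\RCD$ theory.
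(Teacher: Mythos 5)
Your proposal follows essentially the same route as the paper: a contradiction-plus-compactness argument, passing to a pmGH limit where the $\phi_i$ become harmonic with $\sum_i\dist\phi_i\otimes\dist\phi_i=g_{X_\infty}$, using the Bochner inequality to kill the Hessians, invoking the splitting theorem to identify the limit with $\mathbb{R}^{n_\infty}$, and closing via $L^2$-strong convergence of the Riemannian metrics (to pin $n_j=n_\infty$) plus uniform convergence of the $\phi_{i,j}$ (to get the GH-approximation).

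Two points of comparison are worth noting. First, your explicit diagonalization of the constant Gram matrix $A=(\langle\nabla\phi_{i,\infty},\nabla\phi_{l,\infty}\rangle)$ into an orthonormal system $\psi_s$ of harmonic functions is actually a cleaner and more careful way to invoke Theorem~\ref{splitting}(2) than what the paper writes, since the paper's $\phi_j$ only satisfy the tight-frame identity $\sum_j\dist\phi_j\otimes\dist\phi_j=g_X$ rather than $\langle\dist\phi_i,\dist\phi_j\rangle\equiv\delta_{ij}$. (Minor notational slip: $A$ is a $k\times k$ symmetric idempotent of rank $n_\infty$ satisfying $A^2=A$, not $A^TA=I_{n_\infty}$; what you want is that the $n_\infty\times k$ matrix of constant gradients $\nabla\phi_{i,\infty}$ is co-isometric, which is what makes the affine limit map an isometric embedding.) Second, and conversely, you identify the vanishing of $\mathrm{Hess}_{\phi_{i,\infty}}$ on a noncompact $X_\infty$ as the ``main technical obstacle'' and propose an exhaustion by cutoffs $\phi'_R\nearrow 1$; this is not actually needed. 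Your own intermediate step already gives $\int\phi'\sum_i|\mathrm{Hess}_{\phi_{i,\infty}}|^2\di\meas_\infty\le\tfrac{n_\infty}{2}\int\Delta\phi'\di\meas_\infty=0$ for \emph{every} nonnegative compactly supported $\phi'\in D(\Delta)\cap L^\infty$ (the equality $\int\Delta\phi'\di\meas_\infty=0$ holds for each such $\phi'$ individually, since $\sum_i|\nabla\phi_{i,\infty}|^2$ is constant and integrating $\Delta\phi'$ against a compactly supported cutoff identically $1$ on $\supp\phi'$ kills it). Since this holds for all such $\phi'$, one gets $\mathrm{Hess}_{\phi_{i,\infty}}\equiv 0$ $\meas_\infty$-a.e.\ directly, with no exhaustion, polynomial-growth, or good-cutoff argument — this is exactly the paper's shorter treatment of that step.
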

\begin{proof}
The proof is done by contradiction. If not, then there exist $\epsilon_0 \in (0, 1)$, a sequence of $\delta_i \to 0^+$, a sequence of $L_i \to \infty$, a sequence of pointed $\RCD(-\delta_i, N)$ spaces $(X_i, \dist_i, x_i, \meas_i)$ with $\meas_i(B_1(x_i))=1$, a sequence of functions $\phi_{j, i} \in D(\Delta_i, B_{L_i}(x_i))(j=1, 2, \ldots, k)$ with $|\nabla_i\phi_{j, i}|\le C$ on $B_{L_i}(x_i)$ such that the following holds.
\begin{enumerate}
\item We have 
\begin{equation}\label{eeqqtt}
\int_{B_{L_i}(x_i)}\left| \sum_{j=1}^k\dist_i \phi_{j, i} \otimes \dist_i \phi_{j, i}  -g_{X_i}\right| \di \meas_i + \sum_{j=1}^k\int_{B_{L_i}(x_i)}(\Delta_i \phi_{j, i})^2\di \meas_i \to 0.
\end{equation}
\item For any $i \in \mathbb{N}$, one of the following two conditions (a) and (b) is satisfied:
\begin{enumerate}
\item $(X_i, \dist_i, x_i, \meas_i)$ is not $\epsilon_0$-pmGH close to $(\mathbb{R}^{n_i}. \dist_{\mathbb{R}^{n_i}}, 0_{n_i}, \omega_{n_i}^{-1}\mathcal{H}^{n_i})$, where $n_i=\dim_{\dist_i, \meas_i}(X_i)$.
\item $\Phi_i=(\phi_{1, i},\ldots, \phi_{k, i}):B_1(x_i) \to \mathbb{R}^k$ is not an $\epsilon_0$-GH-approximation to the image.
\end{enumerate}
\end{enumerate}
By Theorems \ref{hohohonoho}, \ref{bbbg} and \ref{spectral2}, with no loss of generality we can assume that $\dim_{\dist_i, \meas_i}(X_i)$ does not depend on $i$ (thus we denote it by $n$) and that there exist an $\RCD(0, N)$ space $(X, \dist, \meas)$ and harmonic functions $\phi_j$ on $(X, \dist, \meas)$ such that $\|\nabla \phi_j\|_{L^{\infty}}<\infty$ holds, that $(X_i, \dist_i, x_i, \meas_i)$ pmGH-converge to $(X, \dist, x, \meas)$ and that $\phi_{j, i}$ $H^{1, 2}$-strongly converge to $\phi_j$ on $B_R(x)$ for any $R \in (0, \infty)$. 
Note that (\ref{eeqqtt}) implies 
\begin{equation}\label{eeqqtt4}
\int_{B_{R}(x_i)}\left| \sum_{i=1}^k\dist_i \phi_{j, i} \otimes \dist_i \phi_{l, i}  -g_{X_i}\right|^2 \di \meas_i \to 0,\quad \forall R\in (0, \infty)
\end{equation}
because of $|\nabla_i\phi_{j, i}|\le C$.
In particular Propositions \ref{prop:low} and \ref{weakriem} yield that for any $R \in (0, \infty)$,
\begin{equation}
\int_{B_{R}(x)}\left| \sum_{j=1}^k\dist \phi_j \otimes \dist \phi_j-g_X\right|^2\di \meas \le \liminf_{i \to \infty}\int_{B_{R}(x_i)}\left| \sum_{i=1}^k\dist_i \phi_{j, i} \otimes \dist_i \phi_{l, i}  -g_{X_i}\right|^2 \di \meas_i=0.
\end{equation}
Thus we have
\begin{equation}\label{hyttytyty}
\sum_{j=1}^k\dist \phi_j \otimes \dist \phi_j=g_X
\end{equation}
on $X$.
In particular
\begin{equation}\label{eq:constant}
\sum_{j=1}^k|\dist \phi_j|^2=\left\langle \sum_{j=1}^k\dist \phi_j \otimes \dist \phi_j, g_X\right\rangle=|g_X|^2=l,
\end{equation}
where $l=\dim_{\dist, \meas}(X)$.

On the other hand applying the (localized) Bochner inequality (\ref{eq:bochner}) yields
\begin{equation}\label{eq:locboch}
\frac{1}{2}\int_X|\nabla \phi_j|^2\Delta \phi \di \meas \ge \int_X\phi |\mathrm{Hess}_{\phi_j}|^2\di \meas
\end{equation}
for any $\phi \in D(\Delta)$ with compact support, $\phi \ge 0$ and $\Delta \phi \in L^{\infty}(X, \meas)$. In particular taking the sum with respect to $j$ in (\ref{eq:locboch}) with (\ref{eq:constant}) shows
\begin{equation}
0=\frac{1}{2}\int_X\sum_{j=1}^k|\nabla \phi_j|^2\Delta \phi \di \meas \ge \int_X\sum_{j=1}^k\phi |\mathrm{Hess}_{\phi_j}|^2\di \meas \ge 0.
\end{equation}
Thus $\mathrm{Hess}_{\phi_j}=0$. In particular $\langle \nabla \phi_i, \nabla \phi_j \rangle$ is constant for all $i, j$ because for any bounded open subset $U$ of $X$, we have $\langle \nabla \phi_i, \nabla \phi_j\rangle|_U \in H^{1, 2}(U, \dist, \meas)$ with
\begin{equation}\label{acf}
|\nabla \langle \nabla \phi_i, \nabla \phi_j\rangle| \le |\nabla \phi_j| \cdot |\mathrm{Hess}_{\phi_i}| + |\nabla \phi_i| \cdot |\mathrm{Hess}_{\phi_j}|=0.
\end{equation}
Then applying Theorems \ref{splitting} for $\phi_j$ shows that $(X, \dist, \meas)$ is isometric to $(\mathbb{R}^l, \dist_{\mathbb{R}^l}, \omega_l^{-1}\mathcal{H}^l)$ (because the (LHS) of (\ref{hyttytyty}) consists of the Riemannian metric of the Euclidean factor coming from $\phi_j$), and that the map $\Phi=(\phi_1,\ldots, \phi_k):X \to 
\mathbb{R}^k$ is an isometric embedding as metric spaces (because each $\phi_j$ is a linear function). Since $\phi_{i, j}$ converge uniformly to $\phi_j$ on $B_r(0_k)$ for any $r \in (0, \infty)$, for any $\epsilon \in (0, \infty)$, $\Phi_i:B_1(x_i) \to \mathbb{R}^k$ gives an $\epsilon$-GH-approximation to the image for any sufficiently large $i$. 
Therefore it is enough to prove $l=n$ in order to get a contradiction.

Combining Proposition \ref{weakriem} with (\ref{eeqqtt4}) yields that $g_{X_i}$ $L^2$-strongly converge to $g_{\mathbb{R}^l}$ on $B_{1}(0_l)$.
Thus we have
\begin{equation}
l=\frac{1}{\mathcal{H}^l(B_{1}(0_l))}\int_{B_{1}(0_l)}|g_{\mathbb{R}^l}|^2\di \mathcal{H}^l=\lim_{i \to \infty}\frac{1}{\meas_i (B_{1}(x_i))}\int_{B_{1}(x_i)}|g_{X_i}|^2\di \meas_i=n.
\end{equation}
%Let $J:=\{j; |\nabla \phi_{j, 0}| \neq 0\}$.
%Then Theorem \ref{splitting} yields that for any $j \in J$ there exists a line $\gamma_j:\mathbb{R} \to Y$ such that %$c_jb_{\gamma_i}\equiv \phi_{j, 0}$ for some $c_j \in (0, \infty)$. In particular the (LHS) of (\ref{eq:100}) consists of the %Riemannian metric of the Euclidean factor of $(Y, \dist_Y, \meas_Y)$ coming from $\{\gamma_j\}_{j \in J}$ which shows %that $(Y, \dist_Y)$ is isometric to an Euclidean space whose dimension is at most $k+1$.
\end{proof}
\begin{remark}\label{remarkremark}
As an immediately consequence of Theorem \ref{highregular} (although it may be independent of our interest in the paper), it is easy to see that the following holds:
\begin{itemize}
\item Let $U$ be an open subset of an $\RCD(K, N)$ space $(X, \dist, \meas)$ with $n=\dim_{\dist, \meas}(X)$, let $x \in U$, let $\Phi=(\phi_1, \ldots, \phi_k):U \to \mathbb{R}^k$ be a locally Lipschitz map.
Assume that $x$ is a harmonic point of each $\phi_i$ and that 
\begin{equation}
\lim_{r \to 0^+}\frac{1}{\meas (B_r(x))}\int_{B_r(x)}\left|\sum_{i=1}^k\dist \phi_i \otimes \dist \phi_i-g_X\right|\di \meas=0
\end{equation}
holds. Then $x \in \mathcal{R}_n$.
\end{itemize}
See \cite[Def.5.2]{AHPT} for the definition of \textit{harmonic points} of $H^{1, 2}$ functions.

%Let $(X, \dist, x, \meas)$ be a pointed $\RCD(K, N)$ space, let $U$ be an open subset of $X$ with $x \in U$, let $z \in B_R(x)$ and let $f \in \mathrm{Lip}(U, \dist)$. We say that \textit{$z$ is a harmonic point of $f$} if the following holds:
%\begin{enumerate}
%\item The limit
%\begin{equation}
%\lim_{r \to 0^+}\frac{1}{\meas (B_r(z))}\int_{B_r(z)}|\nabla f|^2\di \meas
%\end{equation}
%exists. 
%\item For any $t_i \to 0^+$ and any pmGH-convergent sequence:
%\begin{equation}
%(X,t_{i}^{-1} \dist,z, \meas(B_{t_{i}}(z))^{-1} \meas) \stackrel{\mathrm{pmGH}}{\to} (Y, \dist_Y, y, \meas_Y),
%\end{equation}
%after passing to a subsequence, there exists a Lipschitz harmonic function $\hat{f}$ on $Y$ such that the rescaled %functions $f_{t_{i},z}$ $H^{1, 2}$-strongly converge to $\hat{f}$ on $B_R(y)$ for any $R \in (0, \infty)$, where $f_{z,t}$ is %defined by
%\begin{equation}
%f_{t, z}:=\frac{1}{t}\left( f-\fint_{B_{t}(z)}f\di\meas \right)
%\end{equation}
%on $(X,t^{-1} \dist,\meas(B_{t}(z))^{-1} \meas)$.
%\end{enumerate}
Instead of giving the precise definition, let us emphasize that 
this notion is closely related to the differentiability of the function at a given point and that for any $H^{1, 2}$ function, the set of all harmonic points of the function has full measure. For instance in the Riemannian case $(X, \dist, \meas)=(M^n, \dist_g, \mathrm{vol}_g)$ with $f \in C^1(M^n)$, every point $x \in M^n$ is a harmonic point of $f$. On the other hand if $f(x)=|x|$ on $\mathbb{R}^n$, then $0_n$ is not a harmonic point of $f$. 

Let us consider the canonical inclusion $\iota: \overline{B}_1(0_n) \hookrightarrow \mathbb{R}^n$ with $e_n=(0, 0,\ldots, 0, 1) \in \overline{B}_1(0_n)$. Note that $(\overline{B}_1(0_n), \dist_{\mathbb{R}^n}, \mathcal{H}^n)$ is a compact non-collapsed $\RCD(0, n)$ space with
\begin{equation}
\sum_{i=1}^n\dist x_i \otimes \dist x_i=g_{\overline{B}_1(0_n)}.
\end{equation}
Then it is worth pointing out that $e_n$ is a harmonic point of the coordinate function $x_i$ for any $i \in \mathbb{N}_{\le n-1}$. However $e_n$ is not a harmonic point of $x_n$.
\end{remark}
\section{Isometric immersion}\label{immersion}
Let us fix an $\RCD(K, N)$ space with $n=\dim_{\dist, \meas}(X)$. 
\begin{definition}[Pull-back metric]
Let $A$ be a Borel subset of $X$ with $\meas (A)>0$, and let $\Phi=(\phi_1, \phi_2,\ldots, \phi_k):A \to \mathbb{R}^k$ be a locally Lipschitz map, that is, for any $x \in A$ there exists $r \in (0, 1)$ such that $\Phi|_{A \cap B_r(x)}$ is Lipschitz. Define the \textit{pull-back (Riemannian semi) metric $\Phi^*g_{\mathbb{R}^k}$} by
\begin{equation}
\Phi^*g_{\mathbb{R}^k}:=\sum_{i=1}^k\dist \phi_i\otimes \dist \phi_i \in L^{0}((T^*)^{\otimes 2}(A, \dist, \meas)),
\end{equation}
where $\dist \phi_i$ is the restriction to $A$ of the exterior derivative $\dist \overline{\phi}_i$ of a Lipschitz extension $\overline{\phi}_i$ of $\phi_i|_{A \cap B_r(x)}$ to $X$ (that is, $\dist \phi_i=\dist \overline{\phi}_i|_A$) which is well-defined because of the locality of the minimal relaxed slope. 
Then we say that $\Phi$ is an \textit{isometric immersion} if $\Phi^*g_{\mathbb{R}^k}=g_X$ holds in $L^{0}((T^*)^{\otimes 2}(A, \dist, \meas))$.
\end{definition}
This definition is compatible with \cite[Prop.4.9]{AHPT} which gives the definition of pull-back (Riemannian semi) metric of a general Lipschitz map into a real separable Hilbert space. Note that if $\Phi$ is $C$-Lipschitz, then $\|\Phi^*g_{\mathbb{R}^k}\|_{L^{\infty}(A)}\le C^2n$.
\begin{definition}[Regular map]
Let $U$ be an open subset of $X$. Then a map $\Phi:=(\phi_1,\ldots, \phi_k):U \to \mathbb{R}^k$ is said to be \textit{regular on $U$} if each $\phi_i$ is in $D(\Delta, U)$ with $\Delta \phi_i \in L^{\infty}(U, \meas)$. 
\end{definition}
Note that thanks to regularity results proved in \cite[Thm.3.1]{AMS} and \cite[Thm.3.1]{Jiang}, independently, any regular map $\Phi:U \to \mathbb{R}$ is locally Lipschitz.
\begin{definition}[Locally uniformly $\delta$-isometric immersion]\label{delta}
Let $U$ be an open subset of $X$, let $\delta \in (0, \infty)$, let $\Phi:U \to \mathbb{R}^k$ be a locally Lipschitz map and let $A$ be a Borel subset of $U$.
We say that $\Phi$ is a \textit{locally uniformly $\delta$-isometric immersion on A} if for any $x \in A$ there exists $r_0 \in (0, 1)$ such that $B_{r_0}(x) \subset U$ and 
\begin{equation}
\frac{1}{\meas (B_r(y))}\int_{B_{\delta^{-1}r}(y)}\left| \Phi^*g_{\mathbb{R}^k}-g_X\right| \di \meas<\delta, \quad \forall y \in B_{r_0}(x) \cap A,\,\forall r \in (0, r_0)
\end{equation}  
are satisfied.
Moreover $\Phi$ is said to be a \textit{locally uniformly isometric immersion on $A$} if it is a locally uniformly $\delta$-isometric immersion on $A$ for any $\delta \in (0, 1)$.
\end{definition}
It is trivial that for a locally Lipschitz map $\Phi:U \to \mathbb{R}^k$ on open subset $U$ of $X$, $\Phi$ is a locally uniformly isometric immersion on $U$ if and only if $\Phi$ is an isometric immersion.
\begin{theorem}\label{yhug}
Let $U$ be an open subset of $X$,  let $A$ be a Borel subset of $U$, and let $\Phi=(\phi_1,\ldots, \phi_k):U \to \mathbb{R}^k$ be a regular map with 
\begin{equation}\label{bounde}
\|\nabla \phi_i\|_{L^{\infty}(U)} \le C.
\end{equation}
Assume that $\Phi$ is a locally uniformly $\delta$-isometric immersion on $A$.
Then we have the following:
\begin{enumerate}
\item The set $A$ is locally $\Psi(\delta; K, N, k, C)$-Reifenberg flat, that is, for any $x \in A$ there exists $r_1 \in (0, 1)$ such that 
\begin{equation}
\dist_{\mathrm{GH}}(B_r(y), B_r(0_n))<\Psi(\delta; K, N, k, C)r,\quad \forall r \in (0, r_1],\,\forall y\in B_{r_1}(x) \cap A
\end{equation}
holds.
\item $\Phi|_A$ gives a locally bi-Lipschitz embedding map from $A$ to $\mathbb{R}^k$ whenever $\delta$ is sufficiently small depending only on $K, N, k$ and $C$. More precisely, for any $x \in A$, there exists $r_2 \in (0, 1)$ such that $\Phi_{B_{r_2}(x)\cap A}$ and $(\Phi|_{B_{r_2}(x) \cap A})^{-1}$ are $(1+\Psi(\delta; K, N, k, C))$-Lipschitz maps. 
\end{enumerate}
\end{theorem}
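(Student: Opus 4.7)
The plan is to reduce both statements to Theorem~\ref{highregular} through a blow-up argument at points $y\in A$. Fix $x\in A$ and let $r_0\in(0,1)$ be as in Definition~\ref{delta}. I would choose $r_1\in(0,\delta r_0/2)$ sufficiently small (depending on $K,N,C,\delta,\|\Delta\phi_i\|_{L^\infty}$), and for each $y\in A\cap B_{r_1}(x)$ and $r\in(0,r_1)$ apply Theorem~\ref{highregular} to the pointed space $\bigl(X,r^{-1}\dist,y,\meas(B_r(y))^{-1}\meas\bigr)$---which is $\RCD(-Kr^2,N)$---together with the rescaled map $\tilde\Phi:=r^{-1}(\Phi-\Phi(y))$, at the scale $L:=\delta^{-1}$.

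I would next verify the hypotheses of Theorem~\ref{highregular}. Under $\tilde\dist=r^{-1}\dist$ the cotangent norms scale by $r$, so $|d\tilde\phi_i|_{\tilde\dist}=|d\phi_i|_\dist\le C$, while $|\tilde\Phi^*g_{\mathbb R^k}-\tilde g_X|_{\tilde\dist}$ coincides pointwise with $|\Phi^*g_{\mathbb R^k}-g_X|_\dist$. Since $\tilde\meas(\tilde B_1(y))=1$ and $d\tilde\meas=\meas(B_r(y))^{-1}d\meas$, one has
\[
\int_{\tilde B_L(y)}(\tilde\Delta\tilde\phi_i)^2\,d\tilde\meas=\frac{r^2}{\meas(B_r(y))}\int_{B_{rL}(y)}(\Delta\phi_i)^2\,d\meas\le r^2\|\Delta\phi_i\|_{L^\infty}^2\cdot\frac{\meas(B_{rL}(y))}{\meas(B_r(y))},
\]
and Bishop-Gromov controls the last ratio in terms of $K,N,L$ once $rL\le 1$. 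Similarly the pull-back-metric integral transforms as
\[
\int_{\tilde B_L(y)}|\tilde\Phi^*g_{\mathbb R^k}-\tilde g_X|\,d\tilde\meas=\frac{1}{\meas(B_r(y))}\int_{B_{\delta^{-1}r}(y)}|\Phi^*g_{\mathbb R^k}-g_X|\,d\meas<\delta
\]
by Definition~\ref{delta}. Choosing $r_1$ small enough makes the Laplacian integral and $|K|r_1^2$ both smaller than $\delta$, so Theorem~\ref{highregular} applies with parameter comparable to $\delta$.

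The conclusions of Theorem~\ref{highregular} are: (i) the rescaled pointed space is $\Psi(\delta;K,N,k,C)$-pmGH close to $(\mathbb R^n,\dist_{\mathbb R^n},0_n,\omega_n^{-1}\haus^n)$, and (ii) $\tilde\Phi:\tilde B_1(y)\to\mathbb R^k$ is a $\Psi(\delta;K,N,k,C)$-GH approximation to its image. Unfolding (i) yields $\dist_{\mathrm{GH}}(B_r(y),B_r(0_n))<\Psi(\delta;K,N,k,C)\,r$ for all $y\in A\cap B_{r_1}(x)$ and $r\in(0,r_1]$, proving Part~(1). Unfolding (ii) yields
\[
\bigl||\Phi(y_1)-\Phi(y_2)|-\dist(y_1,y_2)\bigr|<\Psi(\delta;K,N,k,C)\,r\qquad\forall\,y_1,y_2\in B_r(y).
\]
For Part~(2), setting $r_2:=r_1/3$, given $y_1,y_2\in A\cap B_{r_2}(x)$ I would pick $r:=\dist(y_1,y_2)<r_1$ and apply the above estimate at $y_1$; the additive error $\Psi(\delta)\,r$ becomes $\Psi(\delta)\dist(y_1,y_2)$, yielding
\[
(1-\Psi(\delta;K,N,k,C))\dist(y_1,y_2)\le|\Phi(y_1)-\Phi(y_2)|\le(1+\Psi(\delta;K,N,k,C))\dist(y_1,y_2),
\]
which is the asserted $(1+\Psi)$-bi-Lipschitz embedding on $B_{r_2}(x)\cap A$ once $\delta$ is small enough.

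The main obstacle is the bookkeeping of the rescaling: reconciling the $L^\infty$-Laplacian hypothesis on $\Phi$ with the normalized $L^2$-integral demanded by Theorem~\ref{highregular}, absorbing the Bishop-Gromov doubling constants, and organizing the estimates so that the resulting $\Psi$ depends only on $\delta,K,N,k,C$ (with the quantity $\|\Delta\phi_i\|_{L^\infty}$ absorbed purely into the threshold radius $r_1$, never into the distortion bound itself). The trick of choosing the blow-up scale $r$ equal to $\dist(y_1,y_2)$ is what turns the additive GH distortion into a genuine \emph{multiplicative} bi-Lipschitz bound.
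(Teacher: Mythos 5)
Your proposal is correct and follows essentially the same path as the paper: blow-up at each $y\in A$, verify the hypotheses of Theorem~\ref{highregular} for the rescaled space and functions, read off the almost-Reifenberg flatness and the Gromov--Hausdorff approximation, and then convert the additive GH error to a multiplicative bi-Lipschitz bound by choosing the blow-up scale $r=\dist(y_1,y_2)$. The only cosmetic difference is your fixed choice $L=\delta^{-1}$, which matches Definition~\ref{delta} directly; the paper instead uses a variable $L=L(N,r)\to\infty$ tailored to kill the Laplacian integral, but the same parameter-absorbing argument underlies both.
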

\begin{proof}
Let $x\in A$, let $r_0$ be as in Definition \ref{delta} with
\begin{equation}
\max_i\|\Delta \phi_i\|_{L^{\infty}}\cdot \left(\int_0^1\sinh^{N-1} t\di t \right)^{-1} \le \frac{1}{\sqrt{r_0}},
\end{equation}
and let $y \in B_{r_0}(x) \cap A$.
For $r \in (0, r_0]$ let us denote by $\nabla_r, \Delta_r$ etc the gradient operator, the Laplacian, etc of the rescaled space 
\begin{equation}
(X_r, \dist_r, \meas_r):=(X, r^{-1}\dist, \meas (B_r(y))^{-1}\meas),
\end{equation}
which is an $\RCD(r^2K, N)$ space. Note that the rescaled functions $\phi_{r, i}:=r^{-1}\phi_i$ satisfy $|\nabla_r\phi_{r, i}|=|\nabla \phi| \le C$ and 
\begin{align}
\int_{B^{\dist_r}_L(y)}|\Delta_r\phi_{r, i}|^2\di \meas_r &=\int_{B^{\dist_r}_L(y)}r^2|\Delta \phi_i|^2\di \meas_r \nonumber \\
&\le \|\Delta \phi_i\|_{L^{\infty}(U)}^2r^2\meas_r(B^{\dist_r}_L(y)) \nonumber \\
&\le \|\Delta \phi_i\|_{L^{\infty}(U)}^2r^2 \int_0^L\sinh^{N-1} t\di t \cdot \left(\int_0^1\sinh^{N-1} t\di t \right)^{-1}\nonumber \\
&\le \|\Delta \phi_i\|_{L^{\infty}(U)}^2r\left(\int_0^1\sinh^{N-1} t\di t \right)^{-1} \le \sqrt{r},
\end{align}
where $L=L(N, r)>0$ is defined by satisfying
\begin{equation}
r^{-1}=\int_0^L\sinh^{N-1} t\di t
\end{equation}
and we used the Bishop-Gromov inequality as an $\RCD(-(N-1), N)$ space (because $r^2K \ge -(N-1)$ when $r$ is small).
Then applying Theorem \ref{highregular} for $(X_r, \dist_r, y, \meas_r)$ and $\phi_{r, i}$ yields that $(X, r^{-1}\dist, y, \meas (B_r(y))^{-1}\meas)$ is $\Psi(\delta, r;K, N, k, C)$-pmGH close to $(\mathbb{R}^n, \dist_{\mathbb{R}^n}, 0_n, \omega_n^{-1}\mathcal{H}^n)$ and that $\Phi:B_{r}(y) \to \mathbb{R}^k$ gives a $\Psi(\delta, r; K, N, k, C)r$-GH-approximation to the image. Thus we have (1) and
\begin{equation}\label{wws}
\left| |\Phi(z)-\Phi(w)|_{\mathbb{R}^{k}}-\dist (z, w)\right| \le \Psi (\delta, r; K, N, k, C)r,\quad \forall z, w \in B_{r}(y).
\end{equation}
In particular for all $z, w \in B_{r_0/4}(x)$ letting $r:=\dist(z, w)$ and $w=y$ in (\ref{wws}) shows
\begin{equation}
\left||\Phi(z)-\Phi (w)|_{\mathbb{R}^{k}}-\dist (z, w)\right| \le \Psi (\delta, r; K, N, k, C)\dist (z, w),
\end{equation}
that is,
\begin{equation}
(1-\Psi(\delta, r; K, N, k, C))\dist(z, w) \le |\Phi (z)-\Phi (w)|_{\mathbb{R}^{k}} \le (1+\Psi(\delta, r; K, N, k, C))\dist (z, w).
\end{equation}
Therefore $\Phi|_{B_{r_0/4}(x)}$ is a $(1+\Psi(\delta, r; K, N, k, C))$-Lipschitz embedding satisfying that $(\Phi|_{B_{r_0/4}(x)})^{-1}$ is $(1-\Psi(\delta, r; K, N, k, C))^{-1}$-Lipschitz, whenever $\delta, r$ are sufficiently small  depending only on $K, N, k$ and $C$. Thus we conclude.
\end{proof}
We are now in a position to prove Theorem \ref{thm:bilip}.

\textit{Proof of Theorem \ref{thm:bilip}.}

Applying Theorem \ref{yhug} as $A=U$ and arbitrary $\delta \in (0, 1)$ yields (1) and (2) of Theorem \ref{thm:bilip}. The remaining part, that $U$ is homeomorphic to an $n$-dimensional topological manifold without boundary, is justified by the Reifenberg flatness with the intrinsic Reifenberg theorem proved in \cite[Thm.A.1.2]{CheegerColding1}. $\,\,\,\,\,\square$

%\begin{theorem}[Local bi-Lipschitz embedding implies isometric immersion]\label{labt}
%Let $U$ be an open subset of $X$ and let $A$ be a Borel subset of $U$. Assume that there exist $\phi_i \in D(\Delta, U)(i=1, 2, \ldots, k)$ and $r_0 \in (0, 1)$ such that (\ref{bounde}) holds and that the map $\Phi:=(\phi_1, \phi_2,\ldots, \phi_k):U \to \mathbb{R}^k$ satisfies that there exists $r_0 \in (0, 1)$ such that for any $x \in \mathrm{Leb}(A)$, $\Phi|_{B_r(x)\cap A}$ is a bi-Lipschitz embedding satisfying that $\Phi|_{B_r(x)\cap A}$ and $(\Phi|_{B_r(x)\cap A})^{-1}$ are $(1+\delta)$-Lipschitz.
%Then
%\begin{equation}
%\left| \sum_{i=1}^k\dist \phi_i \otimes \dist \phi_i-g_X\right|<\Psi(\delta, r; K, N, C).
%\end{equation}
%\end{theorem}
%\begin{proof}
%\end{proof}
%\begin{proposition}[Approximation]
%\end{proposition}
%\begin{proof}
%\end{proof}
%\begin{remark}
%Although we assumed the $L^{\infty}_{\mathrm{loc}}$-bound on $\Delta \phi$ in Theorems \ref{yhug} and \ref{labt}, we will be able to improve this to the $L^p$-bound for some $p \in (N, \infty]$. See for instance the proof of \cite[Thm.5.2]{Honda3}. %However it is enough to consider the $L^{\infty}$-version when we discuss eigenmaps.
%\end{remark}

\section{Eigenmap}\label{sseewwe}
In this section we discuss eigenmaps on compact $\RCD(K, N)$ spaces.
\subsection{Definition}
Let us fix a compact $\RCD(K, N)$ space $(X, \dist, \meas)$.  
\begin{definition}[Eigenmap]
We say that a map $\Phi=(\phi_1, \ldots, \phi_k):X \to \mathbb{R}^k$ is a \textit{$k$-dimensional eigenmap} if each $\phi_i$ is an eigenfunction of $-\Delta$ for any $i$ or $\phi_i\equiv 0$, that is, $\Delta \phi_i+\lambda_i\phi_i\equiv 0$ for some $\lambda_i \in [0, \infty)$. Put $A(\Phi):=\{i; |\nabla\phi_i| \not \equiv 0\}$, 
\begin{equation}
\Lambda(\Phi):=\max_{i \in A(\Phi)}\lambda_i,\quad L(\Phi):=\min_{1 \le i \le k}\frac{1}{\meas (X)}\int_X\phi_i^2\di \meas.
\end{equation}
Moreover $\Phi$ is said to be \textit{irreducible} if each $\phi_i$ is not a constant function.
\end{definition}
\begin{theorem}\label{cor:1}
Any $k$-dimensional eigenmap $\Phi:X \to \mathbb{R}^{k}$ satisfies
\begin{equation}\label{kii}
\nabla^*\Phi^*g_{\mathbb{R}^{k}}=-\frac{1}{4}\dist \Delta |\Phi|_{\mathbb{R}^k}^2.
\end{equation}
\end{theorem}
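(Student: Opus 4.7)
The plan is to apply Proposition~\ref{thm:3} componentwise to the sum $\Phi^{*}g_{\mathbb{R}^{k}}=\sum_{i=1}^{k} d\phi_{i}\otimes d\phi_{i}$, and then massage the resulting $1$-form into $-\tfrac14 d\Delta|\Phi|_{\mathbb{R}^{k}}^{2}$ using only the eigenfunction relation $\Delta\phi_{i}=-\lambda_{i}\phi_{i}$ and the Leibniz rule for $\Delta(\phi_{i}^{2})$. In this sense there is nothing deep beyond Proposition~\ref{thm:3}; the task is really to verify admissibility and carry out a short bookkeeping calculation.

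First I would check that each $\phi_{i}$ lies in $\mathrm{Test}F(X,\dist,\meas)$, so that Proposition~\ref{thm:3} applies. By the regularity estimates (\ref{eq:eigenest}) we have $\phi_{i}\in\Lip(X,\dist)\cap L^{\infty}(X,\meas)$, and from $\phi_{i}\in D(\Delta)$ together with $\Delta\phi_{i}=-\lambda_{i}\phi_{i}\in H^{1,2}(X,\dist,\meas)$ we obtain $\phi_{i}\in\mathrm{Test}F(X,\dist,\meas)$. By linearity of $\nabla^{*}$ on its domain, summing (\ref{eq:2}) over $i$ yields
\begin{equation*}
\nabla^{*}\bigl(\Phi^{*}g_{\mathbb{R}^{k}}\bigr)
= -\sum_{i=1}^{k}\Delta\phi_{i}\,d\phi_{i}-\frac{1}{2}\,d\!\sum_{i=1}^{k}|d\phi_{i}|^{2}.
\end{equation*}

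Next I would rewrite the right-hand side of (\ref{kii}). Applying the Leibniz-type identity for the Laplacian to each $\phi_{i}^{2}$ gives $\Delta(\phi_{i}^{2})=2\phi_{i}\Delta\phi_{i}+2|d\phi_{i}|^{2}$; inserting $\Delta\phi_{i}=-\lambda_{i}\phi_{i}$ and summing over $i$,
\begin{equation*}
\Delta|\Phi|_{\mathbb{R}^{k}}^{2}
=\sum_{i=1}^{k}\Delta(\phi_{i}^{2})
=-2\sum_{i=1}^{k}\lambda_{i}\phi_{i}^{2}+2\sum_{i=1}^{k}|d\phi_{i}|^{2},
\end{equation*}
so that, using $\lambda_{i}\phi_{i}\,d\phi_{i}=\tfrac{\lambda_{i}}{2}d(\phi_{i}^{2})=-\Delta\phi_{i}\,d\phi_{i}$,
\begin{equation*}
-\frac{1}{4}d\Delta|\Phi|_{\mathbb{R}^{k}}^{2}
=\frac{1}{2}\sum_{i=1}^{k}\lambda_{i}\,d(\phi_{i}^{2})-\frac{1}{2}\,d\!\sum_{i=1}^{k}|d\phi_{i}|^{2}
=-\sum_{i=1}^{k}\Delta\phi_{i}\,d\phi_{i}-\frac{1}{2}\,d\!\sum_{i=1}^{k}|d\phi_{i}|^{2},
\end{equation*}
which coincides with the expression already computed for $\nabla^{*}\bigl(\Phi^{*}g_{\mathbb{R}^{k}}\bigr)$. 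This gives (\ref{kii}).

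The main (mild) obstacle is to make sure every step is legal in the non-smooth calculus: namely, that $\phi_{i}^{2}\in D(\Delta)$ with $\Delta(\phi_{i}^{2})\in H^{1,2}$ (so that $d\Delta|\Phi|^{2}$ is a well-defined $L^{2}$ one-form), and that the identity (\ref{eq:2}) may be summed over $i$ inside $\nabla^{*}$. The first follows from $\phi_{i}\in\mathrm{Test}F$ and the algebra/Leibniz properties recalled in Subsection~\ref{rcddef1}; the second is automatic from the definition of $\nabla^{*}$ (Definition~\ref{rrtf}), since each summand belongs to $D(\nabla^{*})$ by Proposition~\ref{thm:3}. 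Once admissibility is secured, the identity (\ref{kii}) is purely algebraic and follows from the computation above.
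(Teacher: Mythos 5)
Your proof is correct and follows essentially the same route as the paper's: both apply Proposition~\ref{thm:3} to each summand $\dist\phi_i\otimes\dist\phi_i$, use the chain rule $\Delta(\phi_i^2)=2\phi_i\Delta\phi_i+2|\dist\phi_i|^2$, and then exploit the eigenfunction relation $\Delta\phi_i=-\lambda_i\phi_i$ to match the two sides. The paper merely reduces to $k=1$ by linearity rather than carrying the sum through, which is cosmetic; your explicit admissibility check ($\phi_i\in\mathrm{Test}F$ and linearity of $\nabla^*$ on its domain) is a worthwhile addition but implicit in the paper's argument.
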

\begin{proof}
It is enough to check (\ref{kii}) under assuming $k=1$.
Then since $\Delta |\Phi|^2=2\Phi \Delta \Phi +2|\dist \Phi|^2$, Proposition \ref{thm:3} shows
\begin{align}
\dist \Delta |\Phi|^2&=2\Delta \Phi \dist \Phi +2\Phi \dist \Delta \Phi +2\dist |\dist \Phi|^2 \nonumber \\
&=-4\lambda \Phi \dist \Phi +2\dist |\dist \Phi|^2 \nonumber \\
&=4\Phi \dist \Delta \Phi +2\dist |\dist \Phi|^2 \nonumber \\
&=-4\nabla^*(\dist \Phi \otimes \dist \Phi), 
\end{align}
where $\lambda$ denotes the eigenvalue of $\Phi$.
\end{proof}
\subsection{Compactness for eigenmaps}
Throughout this subsection let us fix an mGH-convergent sequence of compact $\RCD(K, N)$ spaces
\begin{equation}\label{uhb}
(X_i, \dist_i, \meas_i) \stackrel{\mathrm{mGH}}{\to} (X, \dist, \meas).
\end{equation}
Let us discuss a convergence of eigenmaps in the following sense.
\begin{definition}[Convergence of eigenmaps]\label{def:conveigen}
We say that a sequence of $k$-dimensional eigenmaps $\Phi_i:X_i \to \mathbb{R}^k$ \textit{converge} to a $k$-dimensional eigenmap $\Phi:X \to \mathbb{R}^k$ if $\pi_j\circ \Phi_i$ $L^2$-strongly converge to $\pi_j\circ \Phi$ on $X$ for any $j$, where $\pi_j:\mathbb{R}^k \to \mathbb{R}$ is the projection to the $j$-th $\mathbb{R}$, and $\Phi_i^*g_{\mathbb{R}^k}$ $L^2$-strongly converge to $\Phi^*g_{\mathbb{R}^k}$ on $X$.
\end{definition}
Let us fix $k \in \mathbb{N}$ and $k$-dimensional eigenmaps $\Phi_i=(\phi_{1, i}, \ldots, \phi_{k, i}):X_i \to \mathbb{R}^k$ below.
\begin{proposition}[Compactness for general eigenmaps]\label{ssx}
Assume 
\begin{equation}\label{eigcomp}
\sup_i\left( \left\||\Phi_i|_{\mathbb{R}^k}\right\|_{L^2}+\|\Phi_i^*g_{\mathbb{R}^k}\|_{L^1} +\Lambda (\Phi_i)\right)<\infty.
\end{equation}
Then after passing to a subsequence, there exists a $k$-dimensional eigenmap $\Phi: X \to \mathbb{R}^k$ such that $\Phi_i$ converge to $\Phi$ with
\begin{equation}\label{lowr}
\liminf_{i \to \infty}\Lambda(\Phi_i) \ge \Lambda (\Phi), \quad \lim_{i \to \infty}L(\Phi_i)= L(\Phi).
\end{equation}
\end{proposition}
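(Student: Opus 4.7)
The plan is to decouple the problem into componentwise convergence using the spectral stability machinery of Section~\ref{rcddef1}, and then to promote $H^{1,2}$-strong convergence of the components to $L^2$-strong convergence of the pull-back tensors via the uniform Lipschitz estimates available for eigenfunctions. First I would observe that $\sup_i \||\Phi_i|\|_{L^2} < \infty$ yields $\sup_i \|\phi_{j,i}\|_{L^2} < \infty$ for every $j$, while the pointwise estimate $\sum_j |\nabla_i \phi_{j,i}|^2 = \langle \Phi_i^* g_{\mathbb{R}^k}, g_{X_i}\rangle \le \sqrt{N}\,|\Phi_i^* g_{\mathbb{R}^k}|$ together with $\sup_i \|\Phi_i^* g_{\mathbb{R}^k}\|_{L^1} < \infty$ gives $\sup_i \|\nabla_i \phi_{j,i}\|_{L^2} < \infty$. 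Since $0 \le \lambda_{j,i} \le \Lambda(\Phi_i)$ is bounded, a finite diagonal extraction over $j \in \{1,\dots,k\}$ produces a subsequence along which $\lambda_{j,i} \to \lambda_j \in [0,\infty)$ for every $j$.

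Next, applying Theorem~\ref{bbbg} to each component and passing to a further subsequence I would obtain $\phi_j \in H^{1,2}(X,\dist,\meas)$ with $\phi_{j,i} \to \phi_j$ $L^2$-strongly on $X$. Since $\|\Delta_i \phi_{j,i}\|_{L^2} = \lambda_{j,i}\|\phi_{j,i}\|_{L^2}$ is uniformly bounded, Theorem~\ref{spectral2} upgrades this to $H^{1,2}$-strong convergence on $X$ and shows $\Delta_i \phi_{j,i} \weakto \Delta \phi_j$ in $L^2$. Passing to the limit in $\Delta_i \phi_{j,i} + \lambda_{j,i}\phi_{j,i} = 0$, where the product converges $L^2$-strongly, identifies $\phi_j$ as either an eigenfunction of eigenvalue $\lambda_j$ or the zero function; in particular $\Phi := (\phi_1,\ldots,\phi_k)$ is a $k$-dimensional eigenmap.

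For the convergence of pull-back metrics, since the mGH limit $(X,\dist,\meas)$ is compact, $\sup_i \diam(X_i) < \infty$ and $\inf_i \meas_i(X_i) > 0$, so the Lipschitz estimate~\eqref{eq:eigenest} gives $\sup_{i,j}(\|\phi_{j,i}\|_{L^\infty} + \|\nabla_i \phi_{j,i}\|_{L^\infty}) < \infty$. The second part of Theorem~\ref{bbbg} then provides $L^1$-strong convergence of $|\nabla_i \phi_{j,i}|^2$ to $|\nabla \phi_j|^2$, which combined with the $L^\infty$ bound upgrades to $L^2$-strong convergence, and polarization extends this to $\langle \nabla_i \phi_{j,i}, \nabla_i \phi_{l,i}\rangle \to \langle \nabla \phi_j, \nabla \phi_l\rangle$ in $L^2$-strong. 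Testing the tensors against $f_{0,i}\,\dist f_{1,i} \otimes \dist f_{2,i}$ as in the definition yields $L^2$-weak convergence of $\Phi_i^* g_{\mathbb{R}^k}$ to $\Phi^* g_{\mathbb{R}^k}$; combined with the norm identity $\|\Phi_i^* g_{\mathbb{R}^k}\|_{L^2}^2 = \sum_{j,l} \|\langle \nabla_i\phi_{j,i}, \nabla_i\phi_{l,i}\rangle\|_{L^2}^2 \to \|\Phi^* g_{\mathbb{R}^k}\|_{L^2}^2$, this promotes to $L^2$-strong convergence.

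Finally, for~\eqref{lowr}: any $j^* \in A(\Phi)$ with $\lambda_{j^*} = \Lambda(\Phi)$ satisfies $\|\nabla \phi_{j^*}\|_{L^2} > 0$, so the $H^{1,2}$-strong convergence forces $j^* \in A(\Phi_i)$ for all sufficiently large $i$, giving $\liminf_i \Lambda(\Phi_i) \ge \liminf_i \lambda_{j^*,i} = \lambda_{j^*} = \Lambda(\Phi)$. Continuity of $L(\Phi_i)$ is immediate from $L^2$-strong convergence of each component together with $\meas_i(X_i) \to \meas(X)$ and continuity of the minimum of a finite family of convergent sequences. The main obstacle I expect is the $L^2$-strong convergence of the pull-back tensor: the hypothesis only provides an $L^1$-bound on $\Phi_i^* g_{\mathbb{R}^k}$, so converting $L^1$-strong convergence of the traces $|\nabla_i \phi_{j,i}|^2$ into $L^2$-strong convergence of the full tensor hinges on the uniform Lipschitz bound~\eqref{eq:eigenest}, whose applicability in turn requires the mGH convergence of compact spaces to supply uniform diameter and mass bounds.
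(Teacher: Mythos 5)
Your proof is correct and follows essentially the same route as the paper's terse argument, supplying the details the paper compresses into ``it is easy to check'': uniform bounds from the hypotheses and (\ref{eq:eigenest}), $H^{1,2}$-strong convergence of the components via Theorems~\ref{bbbg} and \ref{spectral2}, passage to the limit in the eigenvalue equation, and the $L^1$-to-$L^2$ upgrade for the pull-back tensors using the uniform Lipschitz bound. The only cosmetic difference is that you derive the eigenfunction property by passing to the limit in $\Delta_i\phi_{j,i}+\lambda_{j,i}\phi_{j,i}=0$ directly, rather than invoking the spectral convergence Theorem~\ref{thm:spectral} as the paper does.
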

\begin{proof}
By Theorems \ref{bbbg}, \ref{thm:spectral} and \ref{spectral2} with (\ref{eq:eigenest}) and (\ref{eigcomp}), after passing to a subsequence, with no loss of generality we can assume  that there exist $\lambda_j \in \mathbb{R}$ and $\phi_j \in D(\Delta)$ such that $\Delta \phi_j+\lambda_j\phi_j=0$ holds, that $\sup_i\|\nabla \phi_{j, i}\|_{L^{\infty}}<\infty$ holds and that $\phi_{j, i}$ $H^{1, 2}$-strongly converge to $\phi_j$. Then it is easy to check that $\Phi_i$ converge to a $k$-dimensional eigenmap $\Phi:=(\phi_1, \ldots, \phi_k):X \to \mathbb{R}^k$ and that (\ref{lowr}) holds.
\end{proof}
\begin{remark}
In Theorem \ref{ssx} it is essential to assume a uniform upper bound on $\Lambda (\Phi_i)$. In order to check this, let us consider a sequence of collapsing tori:
\begin{equation}
(X_r, \dist_r, \meas_r):=\left(\mathbb{S}^1\times \mathbb{S}^1(r), \sqrt{\dist_{\mathbb{S}^1}^2+\dist_{\mathbb{S}^1(r)}^2}, \frac{1}{4\pi^2r}\mathcal{H}^2\right) \stackrel{\mathrm{mGH}}{\to} \left(\mathbb{S}^1, \dist_{\mathbb{S}^1}, \frac{1}{2\pi}\mathcal{H}^1\right) =:(X, \dist, \meas)
\end{equation}
as $r \to 0^+$. Then for any $r \in (0, \infty)$, the canonical inclusion $\Phi_r:X_r \to \mathbb{R}^4$ is an irreducible $4$-dimensional eigenmap with $\Phi_r^*g_{\mathbb{R}^4}=g_{X_r}$. Note that $\Lambda (\Phi_r) \to \infty$ holds, that $L(\Phi_r) \to 0$ holds,  and that $g_{X_r}$ $L^2$-weakly converge to $g_X$, but it is not an $L^2$-strong convergence because of
\begin{equation}
\int_{X_r}|g_{X_r}|^2\di \meas_r\equiv 2>1\equiv \int_X|g_X|^2\di \meas.
\end{equation}
In particular $\Phi_r^*g_{\mathbb{R}^4}$ has no $L^2$-strong convergent subsequence as $r \to 0^+$. 
\end{remark}
\begin{corollary}[Compactness for irreducible eigenmaps]\label{ssx2}
Assume that each $\Phi_i$ is irreducible with
\begin{equation}
\sup_i\|\Phi_i^*g_{\mathbb{R}^k}\|_{L^1}<\infty
\end{equation}
and 
\begin{equation}
\inf_iL(\Phi_i)>0.
\end{equation}
Then it holds that (\ref{eigcomp}) is satisfied and that after passing to a subsequence, there exists a $k$-dimensional irreducible eigenmap $\Phi: X \to \mathbb{R}^k$ such that $\Phi_i$ converge to $\Phi$ with
\begin{equation}\label{nbgyt}
L(\Phi_i) \to L(\Phi), \quad \Lambda(\Phi_i) \to \Lambda(\Phi).
\end{equation}
\end{corollary}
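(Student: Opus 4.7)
\textbf{Plan for Corollary \ref{ssx2}.} The plan is to show that the hypothesis (\ref{eigcomp}) of Proposition \ref{ssx} is automatic under the assumptions of the corollary, apply that proposition, and then upgrade the conclusions using $\inf_i L(\Phi_i)>0$. The two quantities in (\ref{eigcomp}) that still need control are $\Lambda(\Phi_i)$ and $\left\||\Phi_i|_{\mathbb{R}^k}\right\|_{L^2}$, since the $L^1$-bound on the pull-back metrics is given.

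For the bound on $\Lambda(\Phi_i)$, let $\lambda_{j,i}$ denote the eigenvalue of $\phi_{j,i}$; irreducibility forces each $\lambda_{j,i}>0$ and $\int_{X_i}\phi_{j,i}\di\meas_i=0$. Using $|g_{X_i}|\le\sqrt{N}$ from Proposition \ref{Riemdef} and Cauchy--Schwarz, one has the pointwise inequality
\begin{equation*}
\sum_{j=1}^k |\nabla_i\phi_{j,i}|^2 \,=\, \tr(\Phi_i^*g_{\mathbb{R}^k}) \,=\, \langle \Phi_i^*g_{\mathbb{R}^k}, g_{X_i}\rangle \,\le\, \sqrt{N}\,|\Phi_i^*g_{\mathbb{R}^k}|.
\end{equation*}
Integrating and combining with the eigenfunction identity $\int|\nabla_i\phi_{j,i}|^2\di\meas_i = \lambda_{j,i}\int\phi_{j,i}^2\di\meas_i \ge \lambda_{j,i}\meas_i(X_i) L(\Phi_i)$ yields
\begin{equation*}
\lambda_{j,i} \,\le\, \frac{\sqrt{N}\,\|\Phi_i^*g_{\mathbb{R}^k}\|_{L^1}}{\meas_i(X_i)\, L(\Phi_i)}.
\end{equation*}
Since $\meas_i(X_i)\to\meas(X)>0$ by mGH convergence, the two hypotheses give $\sup_i\Lambda(\Phi_i)<\infty$. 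For the $L^2$-bound on $|\Phi_i|_{\mathbb{R}^k}$, spectral convergence (Theorem \ref{thm:spectral}) gives $\lambda_1(X_i,\dist_i,\meas_i)\to\lambda_1(X,\dist,\meas)>0$, so $\lambda_{j,i}\ge c>0$ uniformly; together with the $L^1$-bound on $|\nabla_i\phi_{j,i}|^2$ just established, this bounds $\int\phi_{j,i}^2\di\meas_i$ uniformly in $i,j$, verifying (\ref{eigcomp}).

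Proposition \ref{ssx} then produces, along a subsequence, a $k$-dimensional eigenmap $\Phi=(\phi_1,\ldots,\phi_k):X\to\mathbb{R}^k$ with $\Phi_i\to\Phi$ in the sense of Definition \ref{def:conveigen}; a further extraction ensures $\lambda_{j,i}\to\lambda_j$ for each $j$. The $L^2$-strong convergence $\phi_{j,i}\to\phi_j$ together with $\meas_i(X_i)\to\meas(X)$ gives $L(\Phi_i)\to L(\Phi)$, hence $L(\Phi)\ge\inf_i L(\Phi_i)>0$. In particular each $\phi_j$ has positive $L^2$-norm; since each $\phi_{j,i}$ has mean zero, so does the $L^2$-limit $\phi_j$, and combined with $\phi_j\not\equiv 0$ this forces $\phi_j$ non-constant. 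Thus $\Phi$ is irreducible, $A(\Phi_i)=A(\Phi)=\{1,\ldots,k\}$ for all large $i$, and $\Lambda(\Phi_i)=\max_j\lambda_{j,i}\to\max_j\lambda_j=\Lambda(\Phi)$.

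The main obstacle is deriving the uniform upper bound on $\Lambda(\Phi_i)$: the key input is the trace-type inequality $\tr(\Phi_i^*g_{\mathbb{R}^k})\le\sqrt{N}\,|\Phi_i^*g_{\mathbb{R}^k}|$, which converts the $L^1$-bound on the pull-back metric into an $L^1$-bound on each $|\nabla_i\phi_{j,i}|^2$ and, together with the lower bound on $L(\Phi_i)$, yields an upper bound on the Rayleigh quotient $\lambda_{j,i}$. Once (\ref{eigcomp}) is in place, the remaining assertions follow routinely from Proposition \ref{ssx}, spectral convergence, and the irreducibility bookkeeping above.
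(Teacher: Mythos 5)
Your proposal is correct and follows essentially the same route as the paper's proof: both verify the missing hypotheses of Proposition \ref{ssx} by converting the $L^1$-bound on $\Phi_i^*g_{\mathbb{R}^k}$ into a bound on $\sum_j\int|\nabla_i\phi_{j,i}|^2$ via $\langle\Phi_i^*g_{\mathbb{R}^k},g_{X_i}\rangle\le\sqrt{N}\,|\Phi_i^*g_{\mathbb{R}^k}|$, then combine with the Rayleigh-quotient identity, the lower bound $\inf_i L(\Phi_i)>0$, and $\lambda_1(X_i,\dist_i,\meas_i)\to\lambda_1(X,\dist,\meas)>0$ to control both $\Lambda(\Phi_i)$ and $\||\Phi_i|\|_{L^2}$, and finally use spectral convergence to deduce irreducibility of the limit and $\Lambda(\Phi_i)\to\Lambda(\Phi)$. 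The only cosmetic difference is the order (you bound $\Lambda$ first, the paper bounds the $L^2$-norm first), and your explicit mean-zero argument for irreducibility of $\Phi$ spells out what the paper leaves to the reader.
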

\begin{proof}
Since
\begin{align}
\int_{X_i}|\phi_{j, i}|^2\di \meas_i&\le \lambda_1(X_i, \dist_i, \meas_i)^{-1}\int_{X_i}|\dist \phi_{j, i}|^2\di \meas_i \nonumber \\
&\le \lambda_1(X_i, \dist_i, \meas_i)^{-1}\int_{X_i}\langle \Phi_i^*g_{\mathbb{R}^k}, g_{X_i}\rangle \di \meas_i \nonumber \\
&\le \lambda_1(X_i, \dist_i, \meas_i)^{-1}\sqrt{\dim_{\dist_i, \meas_i}(X_i)}\int_{X_i}|\Phi_i^*g_{\mathbb{R}^k}|\di \meas_i,
\end{align} 
we have $\sup_i\|\Phi_i\|_{L^2}<\infty$.
Let us denote by $\lambda_{j, i}$ the eigenvalue of $\phi_{j, i}$.
Then since
\begin{align}
\lambda_{j, i}&=\left(\int_{X_i}|\dist \phi_{j, i}|^2\di \meas_i\right) \cdot \left( \int_{X_i}|\phi_{j, i}|^2 \di \meas_i\right)^{-1} \nonumber \\
&\le \sqrt{\dim_{\dist_i, \meas_i}(X_i)}\int_{X_i}|\Phi_i^*g_{\mathbb{R}^k}|\di \meas_i\cdot \left( \inf_iL(\Phi_i) \right)^{-1},
\end{align}
we have $\sup_i\Lambda (\Phi_i)<\infty$. Thus applying Proposition \ref{ssx} yields that after passing to a subsequence, there exists a $k$-dimensional eigenmap $\Phi:X \to \mathbb{R}^k$ such that $\Phi_i$ converge to $\Phi$.
The irreducibility of $\Phi$ comes from the fact that $\lambda_1(X_i, \dist_i, \meas_i) \to \lambda_1(X, \dist, \meas)$ by Theorem \ref{thm:spectral}. Then it is easy to see (\ref{nbgyt}).
\end{proof}
Finally let us mention the following approximation result. 
\begin{proposition}[Approximation]\label{prop:appl}
Let $\hat{\Phi}=(\hat{\phi}_1, \ldots, \hat{\phi}_k):X \to \mathbb{R}^k$ be a $k$-dimensional (irreducible, respectively) eigenmap. Then there exists a sequence of $k$-dimensional (irreducible, respectively) eigenmaps $\hat{\Phi}_i:X_i \to \mathbb{R}^k$ such that $\hat{\Phi}_i$ converge to $\hat{\Phi}$ with 
\begin{equation}
\sup_i\|\hat{\Phi}_i^*g_{\mathbb{R}^k}\|_{L^{\infty}}<\infty, \quad \Lambda (\hat{\Phi}_i) \to \Lambda (\hat{\Phi}), \quad L(\hat{\Phi}_i) \to L(\hat{\Phi}).
\end{equation}
\end{proposition}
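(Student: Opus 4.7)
The plan is a direct componentwise construction via spectral convergence. For each $j\in\{1,\ldots,k\}$: if $\hat\phi_j\equiv 0$ set $\hat\phi_{j,i}\equiv 0$; if $\hat\phi_j$ is a nonzero constant $c$ set $\hat\phi_{j,i}\equiv c$; otherwise $\hat\phi_j$ is a nonconstant eigenfunction with some eigenvalue $\hat\lambda_j>0$, in which case Theorem~\ref{thm:spectral} supplies eigenfunctions $\hat\phi_{j,i}$ of $-\Delta_i$ on $X_i$ with eigenvalues $\hat\lambda_{j,i}\to\hat\lambda_j$ that $H^{1,2}$-strongly converge to $\hat\phi_j$ on $X$. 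The resulting $\hat\Phi_i:=(\hat\phi_{1,i},\ldots,\hat\phi_{k,i})$ is a $k$-dimensional eigenmap by construction.

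With these defined, I would verify in order: (i) the uniform $L^\infty$-bound on $\hat\Phi_i^*g_{\mathbb{R}^k}$, (ii) the $L^2$-strong convergence of pull-back tensors to $\hat\Phi^*g_{\mathbb{R}^k}$, and (iii) the numerical limits $\Lambda(\hat\Phi_i)\to\Lambda(\hat\Phi)$ and $L(\hat\Phi_i)\to L(\hat\Phi)$, together with preservation of irreducibility. For (i), the standardly-normalized rescaling of $\hat\phi_{j,i}$ combined with \eqref{eq:eigenest} yields
\begin{equation}
\|\nabla_i\hat\phi_{j,i}\|_{L^\infty}\le C\,\hat\lambda_{j,i}^{(N+2)/4}\cdot\|\hat\phi_{j,i}\|_{L^2(\meas_i)}\cdot\meas_i(X_i)^{-1/2},
\end{equation}
whose right-hand side is uniformly bounded because $\hat\lambda_{j,i}\to\hat\lambda_j$, $\|\hat\phi_{j,i}\|_{L^2(\meas_i)}\to\|\hat\phi_j\|_{L^2(\meas)}$ (from $L^2$-strong convergence) and $\meas_i(X_i)\to\meas(X)>0$ (from mGH convergence). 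Step (ii) then proceeds by first invoking the last clause of Theorem~\ref{bbbg} to upgrade the $H^{1,2}$-strong convergence of each $\hat\phi_{j,i}$ to $L^1$-strong convergence of $|\nabla_i\hat\phi_{j,i}|^2$; polarization applied to $\hat\phi_{j,i}+\hat\phi_{l,i}$ extends this to $L^1$-strong (hence, using the $L^\infty$ bound from (i), $L^p$-strong for every $p<\infty$) convergence of the cross terms $\langle\nabla_i\hat\phi_{j,i},\nabla_i\hat\phi_{l,i}\rangle$, from which the $L^2$-strong tensor convergence $d\hat\phi_{j,i}\otimes d\hat\phi_{l,i}\to d\hat\phi_j\otimes d\hat\phi_l$ follows as in the arguments underlying \cite[Lem.6.4]{AHPT}. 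Summing over $j,l$ then gives the required convergence of pull-back metrics, so $\hat\Phi_i$ converges to $\hat\Phi$ in the sense of Definition~\ref{def:conveigen}.

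For (iii), $L(\hat\Phi_i)\to L(\hat\Phi)$ follows from componentwise $L^2$-convergence, $\meas_i(X_i)\to\meas(X)$, and continuity of the minimum over a finite index set. For $\Lambda(\hat\Phi_i)\to\Lambda(\hat\Phi)$, I would observe that $\|\nabla_i\hat\phi_{j,i}\|_{L^2(\meas_i)}^2\to\|\nabla\hat\phi_j\|_{L^2(\meas)}^2$ (part of $H^{1,2}$-strong convergence, and trivially in the constant or zero cases), so $j\in A(\hat\Phi_i)$ if and only if $j\in A(\hat\Phi)$ for all sufficiently large $i$; the convergence $\hat\lambda_{j,i}\to\hat\lambda_j$ then forces the maxima to agree in the limit. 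Preservation of irreducibility follows from the same identity: if every $\hat\phi_j$ is nonconstant then $\|\nabla\hat\phi_j\|_{L^2(\meas)}>0$, so each $\hat\phi_{j,i}$ is eventually nonconstant. I expect the main obstacle to lie in step (i), since it requires simultaneously tracking the eigenfunction normalization across a varying sequence of metric measure spaces with varying total measures; every subsequent step is then a routine application of the compactness/stability machinery already assembled in the preceding subsections.
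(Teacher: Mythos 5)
Your proposal is correct and follows the same approach as the paper: choose approximating eigenfunctions componentwise via the spectral convergence Theorem~\ref{thm:spectral}, then assemble. The paper's own proof is essentially just this one-step invocation, declaring without elaboration that the resulting $\hat{\Phi}_i$ ``satisfy the desired properties,'' whereas you fill in the verification of the $L^\infty$ gradient bound via the rescaled form of \eqref{eq:eigenest}, the tensor convergence via polarization and Theorem~\ref{bbbg}, and the numerical limits $\Lambda$ and $L$; these details are all sound and match what the paper tacitly relies on.
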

\begin{proof}
Let us find $\lambda_j \in [0, \infty)$ with $\Delta \hat{\phi}_j+\lambda_j\hat{\phi}_j\equiv 0$.
By Theorem \ref{thm:spectral}, for any $j$, we can find a convergent sequence $\lambda_{j, i} \to \lambda_j$ and sequence of $\hat{\phi}_{j, i} \in D(\Delta^i)$ with $\Delta_i \hat{\phi}_{j, i}+\lambda_{j, i}\hat{\phi}_{j, i}\equiv 0$ such that $\hat{\phi}_{j, i}$ $H^{1, 2}$-strongly converge to $\hat{\phi}_j$. Then the eigenmaps $\hat{\Phi}_i:=(\hat{\phi}_{1, i},\ldots, \hat{\phi}_{k, i}):X_i \to \mathbb{R}^k$ satisfy the desired properties.
\end{proof}
\section{Isometric immersion via eigenmap}\label{secproof}
The main purpose of this section is to prove Theorem \ref{mthm4}. We split the proof into several lemmas as follows.
Throughout the section we fix a compact $\RCD(K, N)$ space $(X, \dist, \meas)$ with $n=\dim_{\dist, \meas}(X)$ and a $(k+1)$-dimensional eigenmap $\Phi=(\phi_1, \ldots, \phi_{k+1}):X \to \mathbb{R}^{k+1}$ with
\begin{equation}\label{eq:1}
\Phi^*g_{\mathbb{R}^{k+1}}=g_X.
\end{equation}
Thanks to (\ref{eq:eigenest}), $\Phi$ is a $C_1$-Lipschitz map from $X$ to $\mathbb{R}^{k+1}$ for some $C_1 \in (0, \infty)$.

\begin{lemma}\label{houuy}
The following two conditions are equivalent:
\begin{enumerate}
\item $|\Phi|_{\mathbb{R}^{k+1}}$ is a constant function.
\item $(X, \dist, \mathcal{H}^n)$ is a non-collapsed $\RCD(K, n)$ space with 
\begin{equation}
\meas=\frac{\meas(X)}{\mathcal{H}^n(X)}\mathcal{H}^n.
\end{equation}
\end{enumerate}
\end{lemma}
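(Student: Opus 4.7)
The plan is to pivot both implications around the bridge provided by Theorem \ref{cor:1} combined with the characterization of condition (b) given in Theorem \ref{thm:0}. Substituting the isometric-immersion assumption \eqref{eq:1} into the identity $\nabla^{*}(\Phi^{*}g_{\mathbb{R}^{k+1}})=-\frac{1}{4}\dist\Delta|\Phi|^{2}$, one obtains
\begin{equation*}
\nabla^{*}g_{X}=-\frac{1}{4}\dist\Delta|\Phi|_{\mathbb{R}^{k+1}}^{2},
\end{equation*}
so the whole lemma reduces to showing that $|\Phi|$ is constant if and only if $\nabla^{*}g_{X}=0$, and then invoking Theorem \ref{thm:0} to convert the latter into the non-collapsed statement (b).

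For the implication (1)$\Rightarrow$(2), if $|\Phi|^{2}$ is constant then $\Delta|\Phi|^{2}=0$ (and in particular $\dist\Delta|\Phi|^{2}=0$), so the displayed identity above gives $\nabla^{*}g_{X}=0$; Theorem \ref{thm:0} then delivers (2). In the opposite direction, assuming (2), Theorem \ref{thm:0} yields $\nabla^{*}g_{X}=0$, hence $\dist\Delta|\Phi|^{2}=0$ in the $L^{2}$ cotangent sense. One has to check that $|\Phi|^{2}\in D(\Delta)$ and that $\Delta|\Phi|^{2}$ is Sobolev: this is automatic because each $\phi_{i}$ is Lipschitz with $\Delta\phi_{i}=-\lambda_{i}\phi_{i}$, so a direct expansion combined with $\mathrm{tr}(\Phi^{*}g_{\mathbb{R}^{k+1}})=\mathrm{tr}(g_{X})=n$ produces $\Delta|\Phi|^{2}=-2\sum_{i}\lambda_{i}\phi_{i}^{2}+2n$, a Lipschitz function in $H^{1,2}$.

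Once $\dist\Delta|\Phi|^{2}=0$ is established, the Sobolev-to-Lipschitz property applied on the connected (since geodesic) compact space $X$ forces $\Delta|\Phi|^{2}$ to be a constant, and integrating against $1$ forces that constant to vanish. Thus $|\Phi|^{2}\in D(\Delta)$ is harmonic on a compact $\RCD(K,N)$ space, hence constant, giving (1). The step I expect to require the most care is this very last passage from $\dist\Delta|\Phi|^{2}=0$ to $|\Phi|^{2}$ constant, since it is the only place where compactness and connectedness are genuinely used and where one needs to verify the correct Sobolev regularity of $\Delta|\Phi|^{2}$ before quoting the Sobolev-to-Lipschitz property; everything else is a direct application of the already established machinery of Theorems \ref{cor:1} and \ref{thm:0}.
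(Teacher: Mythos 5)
Your proposal is correct and follows the same route as the paper: both implications pivot through Theorem \ref{cor:1} (to rewrite $\nabla^{*}g_{X}$ as $-\tfrac14\dist\Delta|\Phi|^{2}$) and Theorem \ref{thm:0} (to translate $\nabla^{*}g_{X}=0$ into condition (2)). The only cosmetic difference is in finishing (2)$\Rightarrow$(1): after deducing that $\Delta|\Phi|^{2}$ is constant, you integrate against $1$ to force $\Delta|\Phi|^{2}=0$ and then quote that harmonic functions on a compact $\RCD$ space are constant, whereas the paper tests $\Delta|\Phi|^{2}$ against eigenfunctions with positive eigenvalue and uses that these span the orthogonal complement of constants in $L^{2}$; both are standard and equally short, and your explicit computation $\Delta|\Phi|^{2}=-2\sum_{i}\lambda_{i}\phi_{i}^{2}+2n$ is a nice way to verify the $H^{1,2}$ regularity the paper uses implicitly.
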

\begin{proof}
Let us first prove the implication from (1) to (2).
Assume that $|\Phi|_{\mathbb{R}^{k+1}}$ is a constant function. Then Corollary \ref{cor:1} shows 
\begin{equation}
\nabla^*\Phi^*g_{\mathbb{R}^{k+1}}=-\frac{1}{4}\dist \Delta |\Phi|^2_{\mathbb{R}^{k+1}}=0.
\end{equation}
Thus $g \in D(\nabla^*)$ with $\nabla^*g=0$ which completes the proof of (2) because of Theorem \ref{thm:0}.

Next we prove the remaining implication. Assume that $(X, \dist, \mathcal{H}^n)$ is a non-collapsed $\RCD(K, n)$ space.
Then applying Theorem \ref{thm:0} again yields $\dist \Delta |\Phi|^2_{\mathbb{R}^{k+1}}=0$.
Since $\Delta |\Phi|^2_{\mathbb{R}^{k+1}} \in H^{1, 2}(X, \dist, \mathcal{H}^n)$, we see that $\Delta |\Phi|^2_{\mathbb{R}^{k+1}}$ is a constant function. In particular for any $f \in D(\Delta)$ with $\Delta f+\lambda f\equiv 0$ for some $\lambda \in (0, \infty)$, we have
\begin{equation}
\int_Xf\Delta |\Phi |^2_{\mathbb{R}^{k+1}}\di \meas =0
\end{equation}
which implies
\begin{equation}
\int_Xf|\Phi|^2_{\mathbb{R}^{k+1}}\di \meas=0.
\end{equation}
Since $f$ is arbitrary and the eigenfunctions form a Hilbert base of $L^2(X, \meas)$, we see that $|\Phi|^2_{\mathbb{R}^{k+1}}$ is also a constant function.
\end{proof}
From now on we assume that $|\Phi|$ is a constant function.
By rescaling, with no loss of generality we can assume $|\Phi|\equiv 1$.

\begin{lemma}\label{lemelemen}
We have $\min_i\lambda_i\le n$.
\end{lemma}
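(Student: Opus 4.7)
The plan is to extract a pointwise identity from the constancy of $|\Phi|$ together with the hypothesis $\Phi^*g_{\mathbb{R}^{k+1}}=g_X$, and then read off the bound by a trivial averaging argument. First, I would combine the chain/Leibniz rule for the Laplacian (valid on the eigenfunctions $\phi_i$, which lie in $D(\Delta)\cap \Lip(X,\dist)\cap L^{\infty}$ by (\ref{eq:eigenest})) with the eigenfunction equation $\Delta\phi_i+\lambda_i\phi_i\equiv 0$ to get
\begin{equation}
\Delta|\Phi|^2_{\mathbb{R}^{k+1}}=2\sum_{i=1}^{k+1}\phi_i\Delta\phi_i+2\sum_{i=1}^{k+1}|\nabla\phi_i|^2=-2\sum_{i=1}^{k+1}\lambda_i\phi_i^2+2\sum_{i=1}^{k+1}|\nabla\phi_i|^2.
\end{equation}

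Next, the isometric immersion assumption (\ref{eq:1}) together with the defining property of the Riemannian metric in Proposition~\ref{Riemdef} gives the pointwise identity
\begin{equation}
\sum_{i=1}^{k+1}|\nabla\phi_i|^2=\langle g_X,\Phi^*g_{\mathbb{R}^{k+1}}\rangle=|g_X|^2=n
\end{equation}
$\meas$-a.e.\ (using $n=\dim_{\dist,\meas}(X)$). Since $|\Phi|\equiv 1$ by our normalization, the left-hand side of the first display vanishes, so the two displays together yield
\begin{equation}
\sum_{i=1}^{k+1}\lambda_i\phi_i^2=n\quad\text{$\meas$-a.e.\ on }X.
\end{equation}

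Finally, bounding the left-hand side from below by $\min_i\lambda_i\cdot\sum_i\phi_i^2=\min_i\lambda_i$ (using $\lambda_i\ge 0$ since $-\Delta$ has non-negative spectrum, together with $|\Phi|^2\equiv 1$) immediately gives $\min_i\lambda_i\le n$. If one prefers to avoid the pointwise interpretation, integrating the identity above against $\meas$ and dividing by $\meas(X)$ gives $\sum_i\lambda_i a_i=n$ with weights $a_i:=\meas(X)^{-1}\int_X\phi_i^2\di\meas\ge 0$ summing to $1$, which leads to the same conclusion by the weighted-average argument. There is no substantial obstacle here; the only mild point is ensuring the Leibniz rule for the Laplacian applied to $\phi_i^2$, which is standard in the $\RCD$ setting given the regularity of eigenfunctions. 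Any index with $\phi_i\equiv 0$ contributes $a_i=0$ and may be excluded without affecting the argument (or assigned $\lambda_i=0$, which only strengthens the inequality).
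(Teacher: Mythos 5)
Your proof is correct and uses essentially the same mechanism as the paper's. The paper integrates the pointwise identity $\sum_i|\dist\phi_i|^2=\langle\Phi^*g_{\mathbb{R}^{k+1}},g_X\rangle=n$ over $X$ and then rewrites each $\frac{1}{\meas(X)}\int_X|\dist\phi_i|^2\di\meas$ as $\lambda_i a_i$ with $a_i=\frac{1}{\meas(X)}\int_X\phi_i^2\di\meas$, using $|\Phi|^2\equiv1$ to see that the $a_i$ are weights summing to $1$; you instead apply the Leibniz rule for the Laplacian directly to $|\Phi|^2$ to get the pointwise identity $\sum_i\lambda_i\phi_i^2=n$ and read off the bound at a.e.\ point (your integrated alternative is then literally the paper's argument). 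Both routes rest on the same two inputs — $\Phi^*g_{\mathbb{R}^{k+1}}=g_X$ giving $\sum_i|\nabla\phi_i|^2=n$, and $|\Phi|^2\equiv1$ giving the normalization — so there is no substantive difference; your pointwise phrasing is marginally cleaner in that it bypasses the integration-by-parts step, and your remark about indices with $\phi_i\equiv0$ is a fair, if unnecessary, clarification.
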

\begin{proof}
(\ref{eq:1}) yields
\begin{equation}\label{eq:11}
\langle \Phi^*g_{\mathbb{R}^{k+1}}, g\rangle =\langle g, g\rangle =n.
\end{equation}
Thus integrating (\ref{eq:11}) over $X$ shows
\begin{equation}
\min_i\lambda_i  \le \sum_{i=1}^{k+1}\frac{1}{\mathcal{H}^n(X)}\int_X|\dist \phi_i|^2\di \mathcal{H}^n=n.
\end{equation}
\end{proof}
\begin{lemma}\label{hoho}
For any $x \in X$ and any $\epsilon \in (0, 1)$ there exists $r \in (0, 1)$ such that $\Phi|_{B_{r}(x)}:B_r(x) \to (\Phi(B_r(x)), \dist_{\mathbb{S}^k})$ is a bi-Lipschitz map, that $\Phi|_{B_{r}(x)}$ is $(1+\epsilon)$-Lipschitz, and that $(\Phi|_{B_{r}(x)})^{-1}: (\Phi(B_r(x)), \dist_{\mathbb{S}^n}) \to (X, \dist)$ is $(1+\epsilon)$-Lipschitz. 
\end{lemma}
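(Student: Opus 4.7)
The plan is to reduce Lemma~\ref{hoho} to Theorem~\ref{thm:bilip} (bi-Lipschitz embedding into Euclidean space) together with the classical comparison between chord and arc distance on the unit sphere. Since $\Phi$ is a $(k+1)$-dimensional eigenmap, the regularity results of Ambrosio--Mondino--Savar\'e, Jiang, and Jiang--Li--Zhang cited in the introduction ensure that each component $\phi_i \in D(\Delta)$ has $\Delta\phi_i \in L^\infty$, so $\Phi$ is a regular Lipschitz map. Combined with the standing hypothesis $\Phi^*g_{\mathbb{R}^{k+1}}=g_X$, this makes $\Phi$ a regular isometric immersion in the sense of the paper, so Theorem~\ref{thm:bilip} applies.

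Fix $x\in X$ and $\epsilon\in(0,1)$. Choose a small auxiliary $\epsilon' \in (0,\epsilon/3)$. Applying item~(1) of Theorem~\ref{thm:bilip} produces $r_0\in(0,1)$ such that $\Phi|_{B_{r_0}(x)}$ is a bi-Lipschitz embedding into $\mathbb{R}^{k+1}$, and both $\Phi|_{B_{r_0}(x)}$ and $(\Phi|_{B_{r_0}(x)})^{-1}$ are $(1+\epsilon')$-Lipschitz with respect to the ambient Euclidean distance $|\cdot|_{\mathbb{R}^{k+1}}$.

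Because $|\Phi|\equiv 1$ (the standing assumption for this portion of Section~\ref{secproof}), the image $\Phi(B_{r_0}(x))$ lies on $\mathbb{S}^k:=\mathbb{S}^k(1)$, and for any $p,q \in \mathbb{S}^k$ the identity $\dist_{\mathbb{S}^k}(p,q)=2\arcsin(|p-q|_{\mathbb{R}^{k+1}}/2)$ gives
\[
|p-q|_{\mathbb{R}^{k+1}} \le \dist_{\mathbb{S}^k}(p,q) \le (1+\omega(\delta))\,|p-q|_{\mathbb{R}^{k+1}}
\qquad\text{whenever } |p-q|_{\mathbb{R}^{k+1}} \le \delta,
\]
with $\omega(\delta)\to 0$ as $\delta\to 0^+$. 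Since $\Phi$ is Lipschitz, we may shrink $r_0$ to some $r\in(0,r_0)$ so that $|\Phi(y)-\Phi(z)|_{\mathbb{R}^{k+1}}$ is uniformly small on $B_r(x)\times B_r(x)$, ensuring $\omega(\delta)<\epsilon/3$ throughout.

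Combining the two sets of estimates, for every $y,z\in B_r(x)$ we obtain both
\[
\dist_{\mathbb{S}^k}(\Phi(y),\Phi(z)) \le (1+\epsilon/3)|\Phi(y)-\Phi(z)|_{\mathbb{R}^{k+1}} \le (1+\epsilon/3)(1+\epsilon')\,\dist(y,z) \le (1+\epsilon)\,\dist(y,z),
\]
and
\[
\dist(y,z) \le (1+\epsilon')|\Phi(y)-\Phi(z)|_{\mathbb{R}^{k+1}} \le (1+\epsilon')\,\dist_{\mathbb{S}^k}(\Phi(y),\Phi(z)) \le (1+\epsilon)\,\dist_{\mathbb{S}^k}(\Phi(y),\Phi(z)),
\]
which is exactly the assertion of the lemma, together with the injectivity (hence bi-Lipschitz) inherited from Theorem~\ref{thm:bilip}. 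There is no real obstacle in this argument; the only item requiring a modicum of care is choosing $r$ small enough that the chord/arc discrepancy on $\mathbb{S}^k$ is absorbed into the prescribed $\epsilon$, and this is an entirely elementary estimate once Theorem~\ref{thm:bilip} is in hand.
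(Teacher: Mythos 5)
Your proof is correct, and it takes a genuinely different and shorter route than the paper's. The paper does not apply Theorem~\ref{thm:bilip} directly in the proof of Lemma~\ref{hoho}. Instead, it first rotates via $O(k+1)$ so that $\Phi(x)$ is the north pole, derives the pointwise identity
\begin{equation*}
\sum_{j=1}^k\Bigl(1+\tfrac{\phi_j^2}{1-\sum_{j}\phi_j^2}\Bigr)\dist \phi_j\otimes \dist \phi_j+\sum_{j \neq l}^k\tfrac{\phi_j \phi_l}{1-\sum_{j}\phi_j^2}\dist \phi_j \otimes \dist \phi_l=g_{X}
\end{equation*}
near $x$ (using $|\Phi|\equiv 1$ to eliminate $\dist\phi_{k+1}$), and uses it to show that the truncated map $\hat{\Phi}=(\phi_1,\ldots,\phi_k)$ is a locally uniformly $\delta$-isometric immersion near $x$. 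It then applies Theorem~\ref{yhug} to $\hat{\Phi}$ to get a bi-Lipschitz embedding into $\mathbb{R}^k$, and finally composes with the local inverse of the coordinate projection $\pi:(B_r(\Phi(x)),\dist_{\mathbb{S}^k})\to\mathbb{R}^k$, which is $1$-Lipschitz with $(1+\Psi(r;k))$-Lipschitz inverse. You instead invoke Theorem~\ref{thm:bilip}(1) for $\Phi$ as a map into $\mathbb{R}^{k+1}$ and absorb the chord-versus-arc discrepancy $\dist_{\mathbb{S}^k}(p,q)=2\arcsin(|p-q|/2)$ into the $(1+\epsilon)$ constant by shrinking the ball. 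Both routes ultimately compare the spherical metric to a flat model at small scale --- the paper does it through $\pi^{-1}$, you do it through $\arcsin$ --- but yours is more direct and uses Theorem~\ref{thm:bilip} as a black box. The paper's heavier machinery pays off because the uniform $\delta$-isometric-immersion language of Theorem~\ref{yhug} is re-used elsewhere (for instance in the proofs of the sphere and finiteness theorems), but for Lemma~\ref{hoho} alone your argument is cleaner.

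One small remark for completeness: the statement of Lemma~\ref{hoho} in the paper reads ``$(\Phi|_{B_{r}(x)})^{-1}: (\Phi(B_r(x)), \dist_{\mathbb{S}^n}) \to (X, \dist)$'', with $\dist_{\mathbb{S}^n}$ a typo for $\dist_{\mathbb{S}^k}$; your proof treats it as $\dist_{\mathbb{S}^k}$ throughout, which is the intended reading and the one consistent with Theorem~\ref{mthm4}(2).
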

\begin{proof}
Fix a sufficiently small $\epsilon \in (0, 1)$.
Since $O(k+1)$ acts on $\mathbb{S}^k$ transitively, with no loss of generality we can assume that $\phi_{k+1}(x)=1$. Note that
\begin{equation}\label{eq:tt}
\sum_{j=1}^k\left(1+\frac{\phi_j^2}{1-\sum_{j=1}^k\phi_j^2}\right)\dist \phi_j\otimes \dist \phi_j+\sum_{j \neq l}^k\frac{\phi_j \phi_l}{1-\sum_{j=1}^k\phi_j^2}\dist \phi_j \otimes \dist \phi_l=g_{X}
\end{equation}
holds on a neighbourhood of $x$.
Thus we can find a sufficiently small $r_0 \in (0, \epsilon )$ satisfying that
\begin{equation}\label{pointwisezero}
\left| \sum_{j=1}^k\dist \phi_j \otimes \dist \phi_j-g_X\right|(y)<\epsilon
\end{equation}
for $\mathcal{H}^n$-a.e. $y \in B_{2r_0}(x)$. In particular for any $x \in B_{r_0}(y)$ and any $r \in (0, \delta r_0]$ we have
\begin{align}\label{ponthfgrb}
\frac{1}{\meas (B_r(y))}\int_{B_{\delta^{-1}r}(y)}\left|\sum_{j=1}^k\dist \phi_j \otimes \dist \phi_j-g_X\right|\di \meas
&<\epsilon\frac{\meas (B_{\delta^{-1}r}(y))}{\meas (B_r(y))} \nonumber \\
&\le \epsilon \int_0^{\delta^{-1}}\sinh^{N-1} t\di t \cdot \left(\int_0^1\sinh^{N-1} t\di t \right)^{-1} \nonumber \\
&=\sqrt{\epsilon},
\end{align}
where we used the Bishop-Gromov inequality for the rescaled space $(X, r^{-1}\dist, \meas)$ as an $\RCD(-(N-1), N)$ space, and $\delta=\delta(\epsilon, N) >\sqrt{\epsilon}$ is defined by satisfying
\begin{equation}
\sqrt{\epsilon}^{-1}=\int_0^{\delta^{-1}}\sinh^{N-1} t\di t \cdot \left(\int_0^1\sinh^{N-1} t\di t \right)^{-1}.
\end{equation}
Define a map $\hat{\Phi}$ from $X$ to $\mathbb{R}^k$ by
\begin{equation}
\hat{\Phi}:=(\phi_1, \phi_2, \ldots, \phi_k).
\end{equation}
Then (\ref{ponthfgrb}) says that $\hat{\Phi}$ is a locally uniformly $\delta$-isometric immersion on $B_{\delta r_0}(x)$.
Thus applying Theorem \ref{yhug} gives that there exists $r_1 \in (0, \delta r_0)$ such that $\hat{\Phi}|_{B_{r_1}(x)}$ is a $(1+\Psi(\epsilon; K, n, k, C_1))$-Lipschitz embedding map and that $(\hat{\Phi}|_{B_{r_1}(x)})^{-1}$ is $(1+\Psi(\epsilon; K, n, k, C_1))$-Lipschitz.

On the other hand recall that a map $\pi:=\pi_r$ from $(B_r(\Phi(x)), \dist_{\mathbb{S}^k})$ to $(\mathbb{R}^k, \dist_{\mathbb{R}^k})$ defined by
\begin{equation}
\pi (x_1, x_2, \ldots, x_{k+1}):=(x_1, x_2, \ldots, x_k)
\end{equation}
is a $1$-Lipschitz embedding satisfying that $(\pi |_{B_r(\Phi (x))})^{-1}$ is $(1+\Psi(r; k))$-Lipschitz. Therefore since $\Phi=\pi^{-1} \circ \hat{\Phi}$ holds on $B_{r_1}(x)$, we have the desired statement because $\epsilon$ is arbitrary.
\end{proof}

\begin{lemma}\label{uuuuu}
$\Phi:(X, \dist) \to (\mathbb{S}^k, \dist_{\mathbb{S}^k})$ is $1$-Lipschitz.
\end{lemma}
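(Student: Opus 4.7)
The plan is to promote the local $(1+\epsilon)$-bi-Lipschitz control from Lemma~\ref{hoho} to a global $(1+\epsilon)$-Lipschitz estimate by chaining along a minimizing geodesic, and then to let $\epsilon\to 0^+$. Since in the current normalization $|\Phi|\equiv 1$, the target sphere is $\mathbb{S}^k=\mathbb{S}^k(1)$.

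First I would fix $\epsilon \in (0,1)$ and extract a \emph{uniform} scale of local bi-Lipschitz control. For each $x\in X$, Lemma~\ref{hoho} supplies a radius $r(x)>0$ such that $\Phi|_{B_{r(x)}(x)}$ is $(1+\epsilon)$-Lipschitz into $(\mathbb{S}^k,\dist_{\mathbb{S}^k})$. Since $(X,\dist)$ is compact, the open cover $\{B_{r(x)/2}(x)\}_{x\in X}$ has a finite subcover $\{B_{r(x_j)/2}(x_j)\}_{j=1}^M$. Setting
\begin{equation*}
r_0:=\tfrac{1}{2}\min_{j=1,\ldots,M}r(x_j)>0,
\end{equation*}
every ball $B_{r_0}(y)\subset X$ is contained in some $B_{r(x_j)}(x_j)$, so $\Phi|_{B_{r_0}(y)}$ is $(1+\epsilon)$-Lipschitz into $(\mathbb{S}^k,\dist_{\mathbb{S}^k})$ for every $y\in X$.

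Next, given $x,y\in X$, I would use that any $\RCD(K,N)$ space is a proper geodesic space to select a minimizing geodesic $\gamma:[0,L]\to X$ with $\gamma(0)=x$, $\gamma(L)=y$ and $L=\dist(x,y)$. Taking a partition $0=t_0<t_1<\cdots<t_N=L$ with $t_{j+1}-t_j<r_0$ ensures $\gamma([t_j,t_{j+1}])\subset B_{r_0}(\gamma(t_j))$, so the uniform local estimate gives
\begin{equation*}
\dist_{\mathbb{S}^k}\bigl(\Phi(\gamma(t_j)),\Phi(\gamma(t_{j+1}))\bigr)\le (1+\epsilon)(t_{j+1}-t_j).
\end{equation*}
Summing over $j$ and applying the triangle inequality on $\mathbb{S}^k$ yields $\dist_{\mathbb{S}^k}(\Phi(x),\Phi(y))\le (1+\epsilon)\dist(x,y)$. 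Letting $\epsilon\to 0^+$ concludes $\dist_{\mathbb{S}^k}(\Phi(x),\Phi(y))\le \dist(x,y)$.

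There is no genuine obstacle here once Lemma~\ref{hoho} is in hand; the only mildly delicate point is the compactness step producing a uniform $r_0$ independent of the base point, which I would handle by the finite-subcover argument above. Undoing the rescaling $|\Phi|\equiv 1$ recovers $1$-Lipschitzness into $(\mathbb{S}^k(|\Phi|),\dist_{\mathbb{S}^k(|\Phi|)})$ in the original normalization.
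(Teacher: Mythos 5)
Your proof is correct and reaches the conclusion, but it globalizes the local control differently from the paper. You extract a uniform radius $r_0$ by compactness and a finite subcover, then chain the $(1+\epsilon)$-Lipschitz estimate of Lemma~\ref{hoho} along a partition of a minimizing geodesic, sum, and take $\epsilon\to 0^+$. The paper instead works with an arbitrary $1$-Lipschitz test function $F$ on $\mathbb{S}^k$, observes from Lemma~\ref{hoho} that ${\rm lip}(F\circ\Phi)\le 1+\epsilon$ pointwise, invokes the fact that the local slope is an upper gradient on a geodesic space (equivalently, the Sobolev-to-Lipschitz property) to conclude $F\circ\Phi$ is $1$-Lipschitz, and then specializes $F(z)=\dist_{\mathbb{S}^k}(\Phi(x),z)$. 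Both routes exploit the same two inputs, namely Lemma~\ref{hoho} and the geodesic structure of $X$, but yours is entirely elementary and self-contained (no appeal to the slope-is-upper-gradient machinery and no detour through scalar test functions), at the cost of the explicit compactness/finite-subcover step to get a scale $r_0$ independent of the base point. Minor remark: your claim that $B_{r_0}(y)\subset B_{r(x_j)}(x_j)$ deserves the one-line justification that if $\dist(y,x_j)<r(x_j)/2$ and $\dist(z,y)<r_0\le r(x_j)/2$ then $\dist(z,x_j)<r(x_j)$, which you have implicitly; with that spelled out the argument is airtight.
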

\begin{proof}
By Lemma \ref{hoho}, for any $1$-Lipschitz function $F$ on $\mathbb{S}^k$ and any $\epsilon \in (0, 1)$, we see that $\mathrm{lip}(F\circ \Phi)(x) \le 1+\epsilon$ holds for any $x \in X$. Therefore we see that $F\circ \Phi$ is also $1$-Lipschitz because $\epsilon$ is arbitrary, the local slope is an upper gradient and $(X, \dist)$ is a geodesic space (or we can also check this $1$-Lipschitz property directly via the Sobolev-to-Lipschitz property).
%where we used a fact that $|\nabla (F|_{\mathbb{S}^k(\sqrt{k+1})})|=|\nabla F|$ which comes from 
%\begin{equation}
%\frac{\dist_{\mathbb{S}^k}(x, y)}{\dist_{\mathbb{R}^{k+1}} (x, y)} \to 1
%\end{equation}
%as $y \to x$ in $\mathbb{S}^k(\sqrt{k+1})$. 
In particular for fixed $x \in X$, taking $F(z)=\dist_{\mathbb{S}^k}(\Phi(x), z)$ yields
\begin{equation}\label{eq:11112}
\dist_{\mathbb{S}^k}(F(\Phi(x)), F(\Phi(y))) =|F(\Phi(x))-F(\Phi(y))| \le \dist(x, y)
\end{equation}
which completes the proof.
\end{proof}
From now on we assume $k=n$.
\begin{lemma}\label{esse}
$\Phi:(X, \dist) \to (\mathbb{S}^n, \dist_{\mathbb{S}^n})$ is a local isometry.
\end{lemma}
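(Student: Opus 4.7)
The plan is to combine two ingredients: the local bi-Lipschitz structure provided by Lemma~\ref{hoho}, together with invariance of domain, to ensure that $\Phi$ maps small balls around $x$ onto open subsets of $\mathbb{S}^n$; and the pullback identity $\Phi^*g_{\mathbb{R}^{n+1}}=g_X$, which should force $\Phi$ to preserve the arclength of every Lipschitz curve in $X$.

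First, I would fix $x \in X$ and apply Lemma~\ref{hoho} to choose $r_0 > 0$ so that $\Phi|_{B_{3r_0}(x)}$ is a bi-Lipschitz homeomorphism onto its image. Since $X$ is locally homeomorphic to $\mathbb{R}^n$ by Theorem~\ref{thm:bilip} and $\mathbb{S}^n$ is a topological $n$-manifold, invariance of domain then gives that $\Phi(B_{3r_0}(x))$ is open in $\mathbb{S}^n$. After a further shrinking I may pick $r \in (0, r_0)$ such that, for every pair $y, z \in B_r(x)$, the unique unit-speed minimizing geodesic $\sigma:[0,L]\to\mathbb{S}^n$ from $\Phi(y)$ to $\Phi(z)$ (of length $L=\dist_{\mathbb{S}^n}(\Phi(y),\Phi(z))$) lies entirely in $\Phi(B_{3r_0}(x))$, so that the lift $\tilde\sigma := \Phi^{-1}\circ\sigma$ is a well-defined Lipschitz curve in $B_{3r_0}(x)$ from $y$ to $z$.

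Second, I would establish the arclength identity $\mathrm{length}(\Phi\circ\gamma) = \mathrm{length}(\gamma)$ for every Lipschitz curve $\gamma$ in $X$. The formal manipulation is the chain rule along absolutely continuous curves in the RCD framework: for a.e.\ $t$,
\begin{equation*}
|(\Phi\circ\gamma)'(t)|^2 = \sum_i (\phi_i\circ\gamma)'(t)^2 = \Phi^*g_{\mathbb{R}^{n+1}}(\dot\gamma(t),\dot\gamma(t)) = g_X(\dot\gamma(t),\dot\gamma(t)) = |\dot\gamma(t)|^2,
\end{equation*}
and integration yields the arclength identity. Applying it to $\tilde\sigma$ gives $\dist(y,z) \le \mathrm{length}(\tilde\sigma) = \mathrm{length}(\sigma) = L = \dist_{\mathbb{S}^n}(\Phi(y),\Phi(z))$; combined with the global $1$-Lipschitz bound of Lemma~\ref{uuuuu}, this yields $\dist(y,z) = \dist_{\mathbb{S}^n}(\Phi(y),\Phi(z))$ for all $y, z \in B_r(x)$, proving that $\Phi$ is a local isometry.

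The main obstacle I anticipate is the arclength identity in the second step. While the calculation is immediate in a smooth Riemannian setting, on an RCD space it requires the chain rule for Sobolev functions along absolutely continuous curves, handled via the $L^2$-tangent module and the theory of test plans developed by Gigli. A robust alternative would be to bypass the pointwise velocity formula and instead use the Sobolev identity $|\nabla(\psi\circ\Phi)|^2 = |\nabla^{\mathbb{S}^n}\psi|^2\circ\Phi$ for smooth test functions $\psi$ on $\mathbb{S}^n$, which reduces the arclength preservation to a statement purely inside $H^{1,2}$-calculus where $\Phi^*g_{\mathbb{R}^{n+1}} = g_X$ enters directly.
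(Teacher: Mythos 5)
Your geometric skeleton is sound and, in the end, close in spirit to the paper's: both proofs apply invariance of domain to see that $\Phi$ maps small balls onto open subsets of $\mathbb{S}^n$, both exploit that small balls in $\mathbb{S}^n$ are geodesically convex so that the only inequality you need is $\dist(y,z)\le\dist_{\mathbb{S}^n}(\Phi(y),\Phi(z))$, and both close the argument with the global $1$-Lipschitz estimate of Lemma~\ref{uuuuu}. The paper phrases the key inequality by pulling back $\dist(x,\cdot)$ via $\Phi^{-1}$ to a function $f$ on $B_r(\Phi(x))\subset\mathbb{S}^n$ and showing $\mathrm{lip}f\le 1$; integrating the local slope along the sphere geodesic is the same length comparison you state explicitly by lifting the geodesic to $\tilde\sigma=\Phi^{-1}\circ\sigma$.

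The genuine gap is in your justification of the arclength inequality $\mathrm{length}(\tilde\sigma)\le\mathrm{length}(\sigma)$. The identity $\Phi^*g_{\mathbb{R}^{n+1}}=g_X$ is an equality of $L^\infty$-tensors, valid only $\meas$-a.e., and the single curve $\tilde\sigma$ is a $\meas$-null set. So the formal computation $\sum_i(\phi_i\circ\tilde\sigma)'(t)^2=\Phi^*g_{\mathbb{R}^{n+1}}(\dot{\tilde\sigma}(t),\dot{\tilde\sigma}(t))=g_X(\dot{\tilde\sigma}(t),\dot{\tilde\sigma}(t))$ does not make sense along this fixed curve: you are evaluating an $\meas$-a.e.\ defined tensor at points that form a null set. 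Your proposed fallback via test plans does not repair this, because test plans are absolutely continuous with respect to $\meas\otimes\leb$ by definition and therefore only control a family of ``typical'' curves, never the one particular curve $\tilde\sigma$ you must compare; the same applies to the Sobolev identity $|\nabla(\psi\circ\Phi)|^2=|\nabla^{\mathbb{S}^n}\psi|^2\circ\Phi$, which again holds only $\meas$-a.e.

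The correct way to close the gap is to use Lemma~\ref{hoho} a second time, but as a pointwise metric statement rather than going through the $L^\infty$-tensor identity. Lemma~\ref{hoho} tells you that for every point $z\in X$ and every $\epsilon\in(0,1)$ there is a radius $\rho>0$ such that $(\Phi|_{B_\rho(z)})^{-1}$ is $(1+\epsilon)$-Lipschitz as a map from $(\Phi(B_\rho(z)),\dist_{\mathbb{S}^n})$ into $(X,\dist)$. Applying this at each $z=\tilde\sigma(t)$, the metric speed satisfies $|\dot{\tilde\sigma}(t)|\le(1+\epsilon)|\dot\sigma(t)|$ for a.e.\ $t$; since $\epsilon$ is arbitrary, $|\dot{\tilde\sigma}(t)|\le|\dot\sigma(t)|$ a.e.\ and hence $\mathrm{length}(\tilde\sigma)\le\mathrm{length}(\sigma)=\dist_{\mathbb{S}^n}(\Phi(y),\Phi(z))$. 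This is exactly the content of the paper's local-slope estimate $\mathrm{lip}f\le 1+\epsilon$ for all $\epsilon$, followed by integration over the geodesically convex ball. With this replacement your argument is complete and correct.
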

\begin{proof}
Let us remark that $\Phi(X)$ is an open subset of $\mathbb{S}^n$ because $X$ is homeomorphic to an $n$-dimensional topological closed manifold by Theorem \ref{thm:bilip}, where we used the fact that if $F:M^n \to N^n$ is an injective continuous map between two $n$-dimensional topological manifolds $M^n$ and $N^n$, then $F$ is a local homeomorphism to $N^n$. Moreover since $\Phi(X)$ is compact, in particular, it is closed. Thus $\Phi$ is a surjective locally bi-Lipschitz map to $\mathbb{S}^n$.

%Note that for any $F\in \mathrm{Lip}(\mathbb{S}^n, \dist_{\mathbb{S}^n})$ there exists a sequence of $F_i \in C^{\infty}(\mathbb{S}^n)$ with $\sup_i\|\nabla F_i\|_{L^{\infty}}<\infty$ such that $F_i$ $H^{1,2}$-strongly converge to $F$ and that $\nabla F_i(x) \to \nabla F(x)$ holds for $\mathcal{H}^n$-a.e. $x \in \mathbb{S}^n$ (via the heat flow on $\mathbb{S}^n$. In particular $F_i \circ \Phi$ $H^{1, 2}$-strongly converge to $F\circ \Phi$).
%Combining this fact with Lemma \ref{locis} and Mcshane's lemma yields that for any $F \in \mathrm{Lip}(U, \dist_{\mathbb{S}^n})$ for an open subset $U$ of $\mathbb{S}^n$,
%\begin{equation}\label{oor}
%|\nabla (F\circ \Phi)|(x)=|\nabla F| (\Phi (x))
%\end{equation}
%holds for $\mathcal{H}^n$-a.e. $x \in \Phi^{-1}(U)$.

Using the compactness of $X$ with Lemma \ref{hoho}, find $r \in (0, 1)$ satisfying that $\Phi|_{B_{2r}(x)}$ is a bi-Lipschitz map to the image including $B_r(\Phi(x))$ for any $x \in X$. Fix $x \in X$ and let us define $f \in \mathrm{Lip}(B_r(\Phi(x)), \dist_{\mathbb{S}^n})$ by 
\begin{equation}
f(y):=\dist \left(x, (\Phi|_{B_{2r}(x)})^{-1}(y)\right).
\end{equation}
Then Lemma \ref{hoho} yields
\begin{equation}\label{eeeee}
\mathrm{lip}f(y) \le (1+\epsilon) \mathrm{lip} (f\circ \Phi)|( (\Phi|_{B_{2r}(x)})^{-1}(y)) \le 1+\epsilon
\end{equation}
holds for any $y \in B_r(\Phi(x))$ and any $\epsilon \in (0, 1)$. Since $B_r(\Phi(x))$ is a convex subset of $\mathbb{S}^n$, (\ref{eeeee}) shows that $f$ is $1$-Lipschitz on $B_r(\Phi(x))$ because $\epsilon$ is arbitrary. In particular we have for any $y \in B_r(\Phi(x))$
\begin{equation}
\dist(x, y)=|f(\Phi(x))-f(\Phi(y))|\le \dist_{\mathbb{S}^n}(\Phi(x), \Phi(y))
\end{equation}
which completes the proof because of Lemma \ref{uuuuu}.
\end{proof}
The following lemma completes the proof of Theorem \ref{mthm4}:

\begin{lemma}\label{complete}
$\Phi:(X, \dist) \to (\mathbb{S}^n, \dist_{\mathbb{S}^n})$ is an isometry.
\end{lemma}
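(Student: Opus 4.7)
The plan is to combine the local-isometry conclusion of Lemma~\ref{esse} with Obata's rigidity theorem. By Lemma~\ref{esse}, $\Phi\colon (X,\dist)\to (\mathbb{S}^n,\dist_{\mathbb{S}^n})$ is a surjective local isometry, so $X$ locally inherits the smooth Riemannian structure of $\mathbb{S}^n$; hence $(X,\dist)$ is a closed smooth Riemannian $n$-manifold of constant sectional curvature $1$, and in particular $\mathrm{Ric} \equiv (n-1)g_X$. Together with the identification $\meas = (\meas(X)/\mathcal{H}^n(X))\,\mathcal{H}^n$ from part~(b) of Theorem~\ref{mthm4}, the $\RCD$-Laplacian on $(X,\dist,\meas)$ coincides with the Riemannian Laplacian, so the spectra agree and classical Obata is applicable.

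To invoke Obata I still need $\lambda_1(X,\dist,\meas)\le n$. Lemma~\ref{lemelemen} yields $\min_i \lambda_i \le n$, where the $\lambda_i$ are the eigenvalues of the individual components $\phi_i$ of $\Phi$. This does not immediately bound $\lambda_1$, since a priori some $\phi_j$ could be constant (giving $\lambda_j=0$). However, if some component $\phi_j$ were a constant $c$ (including $c=0$), then $\Phi(X)$ would be contained in
\begin{equation}
\mathbb{S}^n \cap \{x_j=c\},
\end{equation}
which has topological dimension at most $n-1$; but Lemma~\ref{hoho} (with $k=n$) shows $\Phi$ is locally bi-Lipschitz from the $n$-dimensional topological manifold $X$ (Theorem~\ref{thm:bilip}), contradicting invariance of domain. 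Thus every $\phi_j$ is a genuine non-constant eigenfunction, so $\lambda_j\ge \lambda_1$ for all $j$, and Lemma~\ref{lemelemen} gives $\lambda_1\le \min_j \lambda_j \le n$.

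With $\mathrm{Ric}\ge (n-1)g_X$ and $\lambda_1\le n$ in hand, Obata's theorem \cite{Obata} forces $(X,\dist)$ to be isometric to $(\mathbb{S}^n,\dist_{\mathbb{S}^n})$. To upgrade the local isometry $\Phi$ to a global one, I would observe that a surjective local isometry between two compact Riemannian $n$-manifolds is a Riemannian covering map, so $\Phi$ has some finite degree $d$; comparing volumes,
\begin{equation}
\mathcal{H}^n(X)=d\cdot \mathcal{H}^n(\mathbb{S}^n),
\end{equation}
and since $X$ and $\mathbb{S}^n$ are isometric we get $d=1$. Hence $\Phi$ is a bijective local isometry of geodesic spaces, i.e., a global isometry.

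The main obstacle I anticipate is the step $\min_i\lambda_i\le n \Rightarrow \lambda_1\le n$: one must rule out constant components of $\Phi$, which is where the local bi-Lipschitz conclusion of Lemma~\ref{hoho} (ultimately Theorem~\ref{thm:bilip}) is essential. Once this is settled, Obata and a routine covering-degree argument close the proof.
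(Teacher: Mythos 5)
Your proposal is correct and essentially follows the paper's strategy (Lemma~\ref{esse} to get constant curvature $1$, Lemma~\ref{lemelemen} plus Obata to identify $(X,\dist)$ with $\mathbb{S}^n$), with two noteworthy differences. First, you make explicit the step from $\min_i\lambda_i\le n$ (Lemma~\ref{lemelemen}) to $\lambda_1\le n$: since a constant component $\phi_j$ would force the remaining $n$ coordinates to give a locally bi-Lipschitz (Theorem~\ref{thm:bilip}) hence open map from the $n$-manifold $X$ into an $(n-1)$-sphere in $\mathbb{R}^n$, invariance of domain rules it out, so every $\lambda_i>0$ and $\lambda_1\le\min_i\lambda_i\le n$. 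The paper leaves this implicit (though it records the same ``no open map into lower dimension'' reasoning when it notes $k\ge n$ after Theorem~\ref{mthm4}), so your explicit argument is a genuine improvement in clarity. Second, to promote the local isometry $\Phi$ to a global one, you use the elementary covering/degree argument (surjective local isometry between compact manifolds is a Riemannian covering; Obata gives equal volumes, so degree $1$), whereas the paper instead invokes the classification ``any eigenmap $\tilde\Phi:\mathbb{S}^n\to\mathbb{R}^{n+1}$ with $\tilde\Phi^*g=g_{\mathbb{S}^n}$ is the canonical inclusion up to $O(n+1)$''. Both close the gap; your degree count is more self-contained and avoids relying on an ``easy to see'' classification of eigenmaps on the round sphere.
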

\begin{proof}
Lemma \ref{esse} shows that $(X, \dist)$ is isometric to an $n$-dimensional closed Riemannian manifold whose sectional curvature is equal to $1$. Then Lemma \ref{lemelemen} with Obata's theorem yields that $(X, \dist)$ is isometric to $(\mathbb{S}^n, \dist_{\mathbb{S}^n})$. In particular $\Phi:(X, \dist) \to (\mathbb{S}^n, \dist_{\mathbb{S}^n})$ is an isometry because it is easy to see that if an eigenmap $\tilde{\Phi}:\mathbb{S}^n \to \mathbb{R}^{n+1}$ satisfies $\tilde{\Phi}^*g_{\mathbb{R}^{n+1}}=g_{\mathbb{S}^n}$, then $\tilde{\Phi}$ coincides with the canonical inclusion $\iota:\mathbb{S}^n \hookrightarrow \mathbb{R}^{n+1}$ up to multiplying an element of $O(n+1)$.
\end{proof}
\section{Sphere theorem}\label{6666r6r6r6}
The main purpose of this section is to prove Theorems \ref{mthm2} and \ref{thm:sphere}.
\subsection{Convergence result for eigenmaps}
The main result of this subsection is the following, which will play a key role in the proofs of Theorems \ref{mthm2} and \ref{thm:sphere}.
\begin{theorem}\label{qqw}
Let $(X_i, \dist_i, \mathcal{H}^n)$ be a sequence of compact non-collapsed $\RCD(K, n)$ spaces, let $k \in \mathbb{N}$ and let $\Phi_i:X_i \to \mathbb{R}^{k}$ be a $k$-dimensional eigenmap. Assume that
\begin{equation}\label{err1}
\frac{1}{\mathcal{H}^n(X_i)}\int_{X_i}\left|\Phi_i^*g_{\mathbb{R}^{k}}-g_{X_i}\right| \di \mathcal{H}^n \to 0
\end{equation}
and 
\begin{equation}
\sup_i\left(\mathrm{diam}(X_i, \dist_i)+\||\Phi_i|_{\mathbb{R}^k}\|_{L^2} + \Lambda(\Phi_i)\right)<\infty
\end{equation}
are satisfied.
Then after passing to a subsequence, there exist a compact non-collapsed $\RCD(K, n)$ space $(X, \dist, \mathcal{H}^n)$ and a $k$-dimensional eigenmap $\Phi:X \to \mathbb{R}^k$ such that the following holds.
\begin{enumerate}
\item $(X_{i}, \dist_{i}, \mathcal{H}^n)$ mGH-converge to $(X, \dist, \mathcal{H}^n)$. 
\item $X_{i}$ is homeomorphic to $X$ for any sufficiently large $i$. 
\item $\Phi_{i}$ converge to $\Phi$.
\item $\Phi^*g_{\mathbb{R}^k}=g_X$.
\end{enumerate}
\end{theorem}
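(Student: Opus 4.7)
The strategy is to extract a convergent subsequence using the compactness machinery of this subsection, identify the limiting metric relation via Proposition \ref{weakriem}, and transfer the topological rigidity of the limit back to the tail of the sequence using Theorems \ref{mthm4} and \ref{thm:bilip}.

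First I would establish (1) and (3) simultaneously. The diameter bound together with the Bishop--Gromov inequality gives a uniform upper bound on $\mathcal{H}^n(X_i)$, so Theorem \ref{hohohonoho} produces, after passing to a subsequence, an mGH-limit $(X,\dist,\meas)$ that is a compact $\RCD(K,n)$ space; Theorem \ref{GHmGH} then identifies $\meas=\mathcal{H}^n$, so $(X,\dist,\mathcal{H}^n)$ is non-collapsed (the nondegeneracy $\mathcal{H}^n(X)>0$ is implicit in the intended applications of the theorem). To pass to a further subsequence at the level of eigenmaps I would apply Proposition \ref{ssx}: the uniform $L^1$-bound on $\Phi_i^*g_{\mathbb{R}^k}$ needed as input is derived from $(\ref{err1})$ together with $\|g_{X_i}\|_{L^1(X_i)}\le \sqrt{n}\,\mathcal{H}^n(X_i)$ via the triangle inequality, and the other two bounds in Proposition \ref{ssx} are given directly. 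Proposition \ref{ssx} then yields a $k$-dimensional eigenmap $\Phi:X\to\mathbb{R}^k$ with $\Phi_i\to \Phi$ in the sense of Definition \ref{def:conveigen}, giving (3), and in particular the $L^2$-strong convergence $\Phi_i^*g_{\mathbb{R}^k}\to \Phi^*g_{\mathbb{R}^k}$ on $X$.

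For (4) I would compare two $L^2$-strong limits. Non-collapsed-ness gives $\dim_{\dist_i,\mathcal{H}^n}(X_i)=n$, while Theorem \ref{thm:bishop}(1) gives $\dim_{\dist,\mathcal{H}^n}(X)=n$; so Proposition \ref{weakriem} upgrades $g_{X_i}\to g_X$ to an $L^2$-strong convergence. The bound $\Lambda(\Phi_i)\le C$ combined with the gradient estimate $(\ref{eq:eigenest})$ gives a uniform $L^\infty$-bound on $\Phi_i^*g_{\mathbb{R}^k}$, so H\"older's inequality promotes the $L^1$-hypothesis $(\ref{err1})$ to $\|\Phi_i^*g_{\mathbb{R}^k}-g_{X_i}\|_{L^2(X_i)}\to 0$. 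Together with the $L^2$-strong convergences $\Phi_i^*g_{\mathbb{R}^k}\to\Phi^*g_{\mathbb{R}^k}$ and $g_{X_i}\to g_X$, this forces $\Phi^*g_{\mathbb{R}^k}=g_X$, giving (4).

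For the topological statement (2), the hypotheses of Theorem \ref{mthm4} are now verified for $(X,\dist,\mathcal{H}^n,\Phi)$, so by Theorem \ref{thm:bilip} the limit $X$ is a closed topological $n$-manifold that is locally Reifenberg flat. To transfer this to the $X_i$ for large $i$ I would argue by an mGH blow-up contradiction: if some sequence of points $y_i\in X_i$ at scales $r_i$ failed to be $\epsilon_0$-Reifenberg flat, then a rescaling, a further subsequence produced by Theorems \ref{hohohonoho}, \ref{GHmGH}, and Proposition \ref{ssx}, and the rescaled version of $(\ref{err1})$ would supply a regular almost-isometric immersion on the blow-up; Theorem \ref{yhug} (in the spirit of Theorem \ref{highregular}) would then force the blow-up limit to be Euclidean, contradicting the assumed failure. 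Once uniform Reifenberg flatness of the $X_i$ at a common scale is established, the intrinsic Reifenberg theorem \cite[Thm.A.1.2]{CheegerColding1} applied to the mGH-close pair $(X_i,X)$ produces the homeomorphism. I expect this final transfer to be the main obstacle, since $(\ref{err1})$ is only a global $L^1$-hypothesis whereas Reifenberg flatness is a uniform local condition that must be extracted uniformly across the sequence rather than read off from the limit alone.
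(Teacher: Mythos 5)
Your decomposition is sound in outline — Theorem \ref{hohohonoho} plus Proposition \ref{ssx} for extracting the subsequence and limit eigenmap, Proposition \ref{weakriem} for comparing the two metric limits, Theorems \ref{mthm4} and \ref{thm:bilip} for the topology of the limit — but two steps are genuine gaps, both of which you flag yourself and then set aside without resolving. Since the theorem must hold exactly as stated (it is applied in Theorems \ref{mthm2} and \ref{thm:sphere} with no volume lower bound), neither can be waved away as ``implicit''.

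First, the non-collapsedness of the limit, equivalently $\mathcal{H}^n(X)>0$, is a conclusion to be derived from (\ref{err1}), not an implicit hypothesis, and your route to (4) is circular: you use Theorem \ref{thm:bishop}(1) to read off $\dim_{\dist,\mathcal{H}^n}(X)=n$, but that theorem applies only to a space already known to be a non-collapsed $\RCD(K,n)$ space, which you obtained from Theorem \ref{GHmGH} under the very nondegeneracy you need to establish. (Relatedly, your application of Theorem \ref{hohohonoho} requires a uniform positive lower bound on $\meas_i(B_1(x_i))$, which the diameter bound alone does not give when $\mathcal{H}^n(X_i)\to 0$; one must first normalize by $\mathcal{H}^n(X_i)^{-1}$.) The correct order reverses yours: after normalizing, Proposition \ref{ssx} gives $\Phi_i^*g_{\mathbb{R}^k}\to\Phi^*g_{\mathbb{R}^k}$ $L^2$-strongly; (\ref{err1}) together with the uniform $L^\infty$ bound you correctly extract forces $g_{X_i}-\Phi_i^*g_{\mathbb{R}^k}\to 0$ in $L^2$, hence $g_{X_i}\to\Phi^*g_{\mathbb{R}^k}$ $L^2$-strongly; since $g_{X_i}\to g_X$ $L^2$-weakly by Proposition \ref{weakriem}, uniqueness of weak limits gives $\Phi^*g_{\mathbb{R}^k}=g_X$ and upgrades the convergence of $g_{X_i}$ to $L^2$-strong; then $\dim_{\dist,\meas}(X)=\frac{1}{\meas(X)}\int_X|g_X|^2\di\meas = n$ by Proposition \ref{Riemdef}, and Theorem \ref{wnon} finally yields that $(X,\dist,\mathcal{H}^n)$ is a non-collapsed $\RCD(K,n)$ space. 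So non-collapsedness and (4) come out together and only at the end.

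Second, your blow-up argument for uniform Reifenberg flatness cannot close, exactly as you suspect: (\ref{err1}) is a global $L^1$ average and does not rescale into the local, uniformly-over-points-and-scales input that Theorem \ref{yhug} needs, so the blow-up limit is not controlled. The missing idea is that the eigenmap is not needed at this stage; the non-collapsed structure supplies a bootstrap. From Theorem \ref{thm:bilip} the limit $X$ is Reifenberg flat, hence $\dist_{\mathrm{GH}}(B_{2r_0}(x),B_{2r_0}(0_n))<\epsilon\cdot 2r_0$ for all $x\in X$ at some fixed $r_0$; by GH-closeness the same holds with error $4\epsilon r_0$ on $X_i$ for large $i$; Theorem \ref{thm:bishop2} converts this single-scale flatness into volume-ratio pinching at scale $2r_0$; the Bishop--Gromov and Bishop inequalities (Theorem \ref{thm:bishop}) propagate the pinching monotonically to every smaller scale; and Theorem \ref{thm:bishop2} again converts back into Reifenberg flatness at all scales below $r_0$, uniformly in $i$ and in the basepoint. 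With this in hand, the topological stability theorem of \cite[Thm.A.1.2 and A.1.3]{CheegerColding1} gives $X_i\cong X$ for large $i$.
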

\begin{proof}
By Theorem \ref{hohohonoho}, with no loss of generality we can assume that there exists a compact $\RCD(K, n)$ space $(X, \dist, \meas)$ such that  
\begin{equation}
\left(X_i, \dist_i, \mathcal{H}^n(X_i)^{-1}\mathcal{H}^n\right) \stackrel{\mathrm{mGH}}{\to} (X, \dist, \meas) 
\end{equation} 
holds.   

Applying Proposition \ref{ssx}, after passing to a subsequence, there exists a $k$-dimensional eigenmap $\Phi:X \to \mathbb{R}^k$ such that $\Phi_i$ converge to $\Phi$.
By Proposition \ref{weakriem} with (\ref{err1}), we see that $g_{X_i}$ $L^2$-strongly converge to $g_X$. In particular we have
\begin{equation}
\int_{X}|\Phi^*g_{\mathbb{R}^{k}}-g_X|^2\di \meas = \lim_{i \to \infty}\frac{1}{\mathcal{H}^n(X_i)}\int_{X_i}|\Phi^*_ig_{\mathbb{R}^{k}}-g_{X_i}|^2\di \mathcal{H}^n=0,
\end{equation}
(thus (4) of Theorem \ref{qqw} holds) and
\begin{equation}\label{aaae}
\dim_{\dist, \meas}(X)=\frac{1}{\meas (X)}\int_X|g_X|^2\di \meas= \lim_{i \to \infty}\frac{1}{\mathcal{H}^n(X_i)}\int_{X_i}|g_{X_i}|^2\di \mathcal{H}^n=\lim_{i \to \infty}\dim_{\dist_i, \meas_i}(X_i)=n.
\end{equation}
Therefore Theorem \ref{wnon} (or Theorem \ref{GHmGH} with \cite[Thm.1.4]{DePhillippisGigli}) yields that $(X, \dist, \mathcal{H}^n)$ is a non-collapsed $\RCD(K, n)$ space with $\mathcal{H}^n(X)\meas=\meas (X) \mathcal{H}^n$. Then the sequence $\{(X_i, \dist_i)\}_i$ is uniformly Reifenberg flat, that is:
\begin{itemize}
\item[$(\spadesuit)$] For any $\epsilon \in (0, 1)$ there exist $i_0 \in \mathbb{N}$ and $r_0 \in (0, \epsilon)$
such that for any $i \in \mathbb{N}_{\ge i_0}$, any $x_i \in X_i$ and any $r \in (0, r_0)$ we have $\dist_{\mathrm{GH}}(B_r(x_i), B_r(0_n))<\epsilon r$.
\end{itemize}
Although the proof of $(\spadesuit)$ is well-known, for reader's convenience, let us give a proof (see also \cite[Rem.2.4]{HondaMondello}).

Fix $\epsilon \in (0, 1)$. Thanks to Theorem \ref{thm:bilip}, $X$ is Reifenberg flat. Thus there exists $r_0 \in (0, \epsilon)$ such that 
\begin{equation}
\dist_{\mathrm{GH}}(B_r(x), B_r(0_n))<\epsilon r,\quad \forall r \in (0, 2r_0],\,\forall x \in X
\end{equation} 
holds. In particular there exists $i_0 \in \mathbb{N}$ such that for any $i \in \mathbb{N}_{\ge i_0}$ and any $x_i \in X_i$ we have
\begin{equation}
\dist_{\mathrm{GH}}(B_{2r_0}(x_i), B_{2r_0}(0_n))<4\epsilon r_0.
\end{equation}
Fix $i \in \mathbb{N}_{\ge i_0}$ and $x_i \in X_i$.
Therefore Theorem \ref{thm:bishop2} yields
\begin{equation}
1 - \Psi(\epsilon; K, n)\le \frac{\mathcal{H}^n(B_{2r_0}(x_i))}{\mathcal{H}^n(B_{2r_0}(0_n))} \le 1 + \Psi(\epsilon; K, n).
\end{equation}
The Bishop-Gromov and Bishop inequalities (Theorem \ref{thm:bishop}) yield that 
\begin{equation}
1 - \Psi(\epsilon; K, n) \le \frac{\mathcal{H}^n(B_{2r}(x_i))}{\mathcal{H}^n(B_{2r}(0_n))}\le 1 + \Psi(\epsilon; K, n),\quad \forall r \in (0, r_0]
\end{equation}
holds. Thus applying Theorem \ref{thm:bishop2} again shows
\begin{equation}
\dist_{\mathrm{GH}}(B_r(x_i), B_r(0_n))<\Psi(\epsilon; K, n)r,\quad \forall r \in (0, r_0]
\end{equation}
which completes the proof of $(\spadesuit)$ because $\epsilon$ is arbitrary.

Then the remaining statement, that $X_{i}$ is homeomorphic to $X$ for any sufficiently large $i$, follows from $(\spadesuit)$ with the topological stability theorem proved in \cite[Thm.A.1.2 and A.1.3]{CheegerColding1}.
\end{proof}
\subsection{Proof of Theorems \ref{mthm2} and \ref{thm:sphere}}
Let us conclude the proofs of Theorems \ref{mthm2} and \ref{thm:sphere}.

\textit{Proof of Theorem \ref{mthm2}.}

Let us prove (1) of Theorem \ref{mthm2}.
The proof is done by contradiction.
If not, there exist $\epsilon_0 \in (0, 1)$, a sequence of compact non-collapsed $\RCD(K, n)$ spaces $(X_i, \dist_i, \mathcal{H}^n)$ with $\mathrm{diam}(X_i, \dist_i) \le d$, and a sequence of irreducible $(n+1)$-dimensional eigenmaps $\Phi_i:X_i \to \mathbb{R}^{n+1}$ with $L(\Phi_i) \ge \tau$ such that  
\begin{equation}\label{err}
\frac{1}{\mathcal{H}^n(X_i)}\int_{X_i}\left|\Phi_{i}^*g_{\mathbb{R}^{n+1}}-g_{X_i}\right|^2\di \mathcal{H}^n \to 0
\end{equation}
and 
\begin{equation}\label{cc}
\dist_{\mathrm{GH}}(X_i, \mathbb{S}^n(a_i)) \ge \epsilon_0
\end{equation}
are satisfied, where 
\begin{equation}
a_i^2=\frac{1}{\mathcal{H}^n(X_i)}\int_{X_i}|\Phi_i|^2\di \mathcal{H}^n.
\end{equation}
Applying Theorem \ref{qqw} with Corollary \ref{ssx2} shows that after passing to a subsequence, there exist a compact non-collapsed $\RCD(K, n)$ space and an irreducible $(n+1)$-dimensional eigenmap $\Phi:X \to \mathbb{R}^{n+1}$ such that $(X_{i}, \dist_{i}, \mathcal{H}^n)$ mGH-converge to $(X, \dist, \mathcal{H}^n)$, that $\Phi_i$ converge to $\Phi$ and that $\Phi^*g_{\mathbb{R}^{n+1}}=g_X$.
Then Theorem \ref{mthm4} yields that $|\Phi|$ is a constant function and that $(X, \dist)$ is isometric to $(\mathbb{S}^n(|\Phi|), \dist_{\mathbb{S}^n(|\Phi|)})$ which contradicts (\ref{cc}) because of $a_i^2 \to |\Phi|^2$.

The proof of (2) of Theorem \ref{mthm2} is similarly done by contradiction after establishing the following result:
\begin{itemize}
\item[$(\heartsuit)$]If a sequence of non-collapsed $\RCD(K, n)$ spaces $(X_i, \dist, \mathcal{H}^n)$ satisfies $\dist_{\mathrm{GH}}(X_i, \mathbb{S}^n(a)) \to 0$ for some $a \in (0, \infty)$, then there exists a sequence of irreducible $(n+1)$-dimensional eigenmaps $\Phi_i:X_i \to \mathbb{R}^{n+1}$ such that $\Phi_i$ converge to the canonical inclusion $\iota:\mathbb{S}^n(a) \to \mathbb{R}^{n+1}$ with $L(\Phi_i) \to L(\iota)$.
\end{itemize}
However $(\heartsuit)$ is a direct consequence of Proposition \ref{prop:appl} with Theorem \ref{GHmGH}. Thus we conclude. $\,\,\,\,\,\square$

\textit{Proof of Theorem \ref{thm:sphere}.}

The proof is also done by contradiction. If not, then we can find a sequence of compact non-collapsed $\RCD(K, n)$ spaces $(X_i, \dist_i, \mathcal{H}^n)$ with $\mathrm{diam} (X_i, \dist_i) \le d$, and a sequence of irreducible $(n+1)$-dimensional eigenmaps $\Phi_i:X_i \to \mathbb{R}^{n+1}$ with $L(\Phi_i) \ge \tau$ such that $X_i$ is not homeomorphic to $\mathbb{S}^n$ and that
\begin{equation}
\frac{1}{\mathcal{H}^n(X_i)}\int_{X_i}|\Phi^*_ig_{\mathbb{R}^{n+1}}-g_{X_i}|\di \mathcal{H}^n\to 0
\end{equation} 
holds. Then applying Theorems \ref{mthm2} and \ref{qqw} with Corollary \ref{ssx2}, after passing to a subsequence, we see that $(X_i, \dist_i)$ GH-converge to $(\mathbb{S}^n(a), \dist_{\mathbb{S}^n(a)})$ for some $a \in (0, \infty)$ and that $X_i$ is homeomorphic to $\mathbb{S}^n(a)$ for any sufficiently large $i$. This is a contradiction. $\,\,\,\,\,\square$

\section{Topological finiteness theorems}
In this section we establish topological finiteness theorems for almost isometric immersions. 
%Since the proofs of the main results in this section (Theorems \ref{topological1} and \ref{topological2}) are essentially same to that of Theorems \ref{mthm3} and Theorem \ref{thm:sphere} based on Theorem \ref{thm:bilip}, we give only sketches of the proofs.
\subsection{Convergence result for regular maps}
Let us fix $k \in \mathbb{N}$ and a mGH-convergent sequence of compact $\RCD(K, N)$ spaces:
\begin{equation}
(X_i, \dist_i, \meas_i) \stackrel{\mathrm{mGH}}{\to} (X, \dist, \meas). 
\end{equation}
Note that if a sequence $\phi_i \in D(\Delta_i)$ satisfies
$\sup_i\|\Delta_i \phi_i\|_{L^{\infty}}<\infty$, then
$\sup_i\|\nabla_i \phi_i\|_{L^{\infty}}<\infty$ holds, which is a direct consequence of regularity results proved in \cite[Thm.3.1]{AMS} and \cite[Thm.3.1]{Jiang}, independently. Since we will use this fact immediately below, for reader's convenience, we give a sketch of the proof as follows.

From \cite[Thm.3.1]{AMS} and \cite[Thm.3.1]{Jiang}, it is enough to check that under the assumption $\int_{X_i}\phi_i\di \meas_i=0$ with $\meas_i(X_i)=1$,
\begin{equation}\label{eq:fdrrrefer}
\sup_i\|\phi_i\|_{L^{\infty}(X_i)}<\infty.
\end{equation}
Note that the Poincar\'e inequality implies
\begin{align*}
\int_{X_i}\left|\phi_i-\int_{X_i}\phi_i\di\meas_i\right|^2\di \meas_i&\le C(K, N, \mathrm{diam}(X_i, \dist_i))\int_{X_i}|\nabla \phi_i|^2\di \meas_i \\
&=-C(K, N, \mathrm{diam}(X_i, \dist_i))\int_{X_i}\phi_i\Delta_i\phi_i\di \meas_i \\
&\le C(K, N, \mathrm{diam}(X_i, \dist_i))\|\Delta_i\phi_i\|_{L^{\infty}}\int_{X_i}|\phi_i|\di \meas_i.
\end{align*}
In particular applying the Cauchy-Schwarz inequality shows
\begin{equation}\label{eq:hyt}
\sup_i \|\phi_i\|_{L^2}<\infty.
\end{equation}
Take a ball $B_{\epsilon}(x_i)$ in $X_i$ with $\meas_i(X_i \setminus B_{\epsilon}(x_i))>0$, and find $\hat{\phi}_i \in H^{1, 2}_0(B_{\epsilon}(x_i), \dist_i, \meas_i) \cap D(\Delta_i, B_{\epsilon}(x_i))$ with $\Delta_i \hat{\phi}_i=\Delta_i \phi_i$ and 
\begin{equation}\label{eq:knnnjuiu}
\|\hat{\phi}_i\|_{L^{\infty}(B_{\epsilon}(x_i))}\le C(K, N)\epsilon^2\|\Delta \phi_i\|_{L^{\infty}}
\end{equation}
(see also \cite[Thm.4.1]{BMosco}). Then applying \cite[Lem.4.1]{Jiang} for a harmonic function $\phi_i-\hat{\phi}_i$ yields
\begin{equation}\label{eqeww}
\|\phi_i-\hat{\phi}_i\|_{L^{\infty}(B_{\epsilon/2}(x_i))}\le C(K, N, \mathrm{diam}(X_i, \dist_i), \epsilon)\frac{1}{\meas_i(B_{\epsilon}(x_i))}\int_{B_{\epsilon}(x_i)}|\phi_i-\hat{\phi}_i|\di \meas_i.
\end{equation}
In particular combining (\ref{eq:hyt}) with (\ref{eq:knnnjuiu}) and (\ref{eqeww}) yields (\ref{eq:fdrrrefer}) via taking $\epsilon/2$-nets.

As in Definition \ref{def:conveigen} for eigenmaps, let us define the convergence of Lipschitz (not necessary eigen) maps as follows.  
\begin{definition}[Convergence of Lipschitz maps]\label{def:convgeneral}
We say that a sequence of Lipschitz maps $\Phi_i:X_i \to \mathbb{R}^k$ \textit{converge} to a Lipschitz map $\Phi:X \to \mathbb{R}^k$ if $\pi_j\circ \Phi_i$ $L^2$-strongly converge to $\pi_j\circ \Phi$ for any $j$, where $\pi_j:\mathbb{R}^k \to \mathbb{R}$ is the projection to the $j$-th $\mathbb{R}$, and $\Phi_i^*g_{\mathbb{R}^k}$ $L^2$-strongly converge to $\Phi^*g_{\mathbb{R}^k}$ on $X$.
\end{definition}
\begin{proposition}[Compactness for regular maps]\label{compactne}
Let $\Phi_i=(\phi_{1, i},\ldots, \phi_{k, i}) :X_i \to \mathbb{R}^k$ be a sequence of regular maps with
\begin{equation}\label{eq:33wwssddee}
\sup_{j, i}\left( \|\phi_{j, i}\|_{L^2}+\|\Delta_i \phi_{j, i}\|_{L^{\infty}}\right)<\infty.
\end{equation}
Then after passing to a subsequence there exists a regular map $\Phi:X \to \mathbb{R}^k$ such that $\Phi_i$ converge to $\Phi$.
\end{proposition}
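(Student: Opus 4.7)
The strategy is fourfold: (i) promote the hypotheses to uniform Lipschitz and $L^\infty$ bounds on $\phi_{j,i}$, (ii) extract an $L^2$-strongly convergent subsequence and identify a candidate limit, (iii) show that the limit is regular via the stability theorem for the Laplacian, and (iv) upgrade the componentwise convergence to $L^2$-strong convergence of the pull-back tensors.

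For step (i), I would set $c_{j,i} := \meas_i(X_i)^{-1}\int_{X_i}\phi_{j,i}\di\meas_i$. Since $\meas_i(X_i) \to \meas(X) \in (0, \infty)$ under mGH-convergence, Cauchy--Schwarz gives $\sup_{i,j}|c_{j,i}| < \infty$. The mean-zero functions $\tilde\phi_{j,i} := \phi_{j,i} - c_{j,i}$ have the same Laplacian as $\phi_{j,i}$, so after passing to the normalized measure $\meas_i/\meas_i(X_i)$ the argument sketched in the paragraph preceding the statement of Proposition~\ref{compactne} (Poincar\'e bound on $L^2$, interior Dirichlet decomposition, and Moser-type bound on the harmonic part) yields $\sup_{i,j}\|\tilde\phi_{j,i}\|_{L^\infty} < \infty$; by \cite[Thm.~3.1]{AMS} and \cite[Thm.~3.1]{Jiang}, this upgrades to $\sup_{i,j}\|\nabla_i\phi_{j,i}\|_{L^\infty} < \infty$.

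For steps (ii) and (iii), I would perform a diagonal extraction so that $c_{j,i} \to c_j$ in $\mathbb{R}$ for every $j$. Choosing $R > \diam(X, \dist) + 1$ so that $B_R(x_i) = X_i$ for $i$ large, Theorem~\ref{bbbg} produces $\phi_j \in H^{1,2}(X, \dist, \meas)$ such that $\phi_{j,i}$ $L^2$-strongly converge to $\phi_j$ on $X$. Then Theorem~\ref{spectral2} (with the comment after it covering the case $R = \infty$) gives $\phi_j \in D(\Delta)$, $L^2$-weak convergence of $\Delta_i\phi_{j,i}$ to $\Delta\phi_j$, and $H^{1,2}$-strong convergence on all of $X$. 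Extracting one further subsequence that converges $L^\infty$-weak-$*$ (possible by the uniform $L^\infty$-bound on $\Delta_i\phi_{j,i}$) and invoking uniqueness of $L^2$-weak limits forces $\Delta\phi_j \in L^\infty(X, \meas)$, so that $\Phi := (\phi_1, \ldots, \phi_k)$ is regular.

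For step (iv), the last assertion of Theorem~\ref{bbbg} yields $L^1$-strong convergence of $|\nabla_i\phi_{j,i}|^2$ to $|\nabla\phi_j|^2$; polarizing via $\phi_{j,i} \pm \phi_{l,i}$ (which remain $H^{1,2}$-strongly convergent with uniform gradient bound) gives the analogous $L^1$-strong convergence of $\langle\nabla_i\phi_{j,i}, \nabla_i\phi_{l,i}\rangle$, and the uniform $L^\infty$-bound from step (i) then upgrades these to $L^p$-strong convergence for every $p \in [1, \infty)$. Combining this with the uniform Lipschitz bound and testing against elements of $\mathrm{Test}\,(T^*)^{\otimes 2}$, these componentwise convergences repackage into $L^2$-weak convergence of $\Phi_i^*g_{\mathbb{R}^k}$ to $\Phi^*g_{\mathbb{R}^k}$ in the sense of Section~2.6; convergence of $L^2$-norms then follows from
\begin{equation*}
\|\Phi_i^*g_{\mathbb{R}^k}\|_{L^2(X_i)}^2 = \sum_{j, l}\int_{X_i}\langle \nabla_i\phi_{j,i}, \nabla_i\phi_{l,i}\rangle^2\di\meas_i,
\end{equation*}
promoting weak to strong convergence. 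I expect the main technical obstacle to be precisely this last packaging: translating componentwise $L^p$-convergence of inner products of gradients into tensor-level $L^2$-strong convergence of $\Phi_i^*g_{\mathbb{R}^k}$, in the same spirit as \cite[Lem.~6.4]{AHPT} and \cite[Thm.~10.3]{AmbrosioHonda}.
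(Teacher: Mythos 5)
Your proposal is correct and follows essentially the same route as the paper, which simply invokes the regularity argument at the start of the subsection for the uniform gradient bound and then refers to the proof of Proposition \ref{ssx} for the extraction and tensor convergence. You have usefully filled in one detail the paper's terse proof leaves implicit — that the limit $\Delta\phi_j$ retains the uniform $L^\infty$ bound (so the limit map is regular, not merely in $D(\Delta)$) — and your upgrade from componentwise $L^1$-strong convergence of inner products to tensor $L^2$-strong convergence via the uniform Lipschitz bound and the identity $|\Phi^*g_{\mathbb{R}^k}|_{\rm HS}^2=\sum_{j,l}\langle\nabla\phi_j,\nabla\phi_l\rangle^2$ is exactly what ``it is easy to check'' in Proposition \ref{ssx} amounts to.
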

\begin{proof}
From the argument at the beginning of this subsection with (\ref{eq:33wwssddee}), we know $\sup_{j, i} \|\nabla_i\phi_{j, i}\|_{L^{\infty}}<\infty$. Then the desired conclusion follows from an argument similar to the proof of Proposition \ref{ssx}.
\end{proof}
Let us introduce the following ``gap theorems'':
\begin{theorem}[Gap theorem I]\label{mthm5}
For any $K \in \mathbb{R}$, any $N \in [1, \infty)$ and all $d, \lambda \in (0, \infty)$ there exists $\epsilon_0:=\epsilon_0(K, N, d, \lambda) \in (0, 1)$ such that if a compact $\RCD(K, N)$ space $(X, \dist, \meas)$ with $n=\dim_{\dist, \meas}(X)$ satisfies $\mathrm{diam}(X, \dist)\le d$, then 
\begin{equation}
\frac{1}{\meas (X)}\int_X|\Phi^*g_{\mathbb{R}^k}-g_X|\di \meas \ge \epsilon_0
\end{equation}
holds for any $k \in \mathbb{N}_{\le n}$ and any regular map $\Phi=(\phi_1,\ldots, \phi_k):X \to \mathbb{R}^k$ with $\|\Delta \phi_i\|_{L^{\infty}}\le \lambda$.
\end{theorem}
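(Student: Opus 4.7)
The plan is to argue by contradiction: starting from a hypothetical violating sequence, I would extract an mGH-limit and identify the limit map as a regular isometric immersion of the limit $\RCD(K,N)$ space into $\mathbb{R}^k$ with $k$ no larger than its essential dimension, then invoke Theorem~\ref{thm:bilip} to derive a topological contradiction.

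Suppose the conclusion fails. Then there exist compact $\RCD(K,N)$ spaces $(X_i, \dist_i, \meas_i)$ with $\diam(X_i,\dist_i) \le d$, integers $k_i \le n_i := \dim_{\dist_i,\meas_i}(X_i)$, and regular maps $\Phi_i = (\phi_{1,i},\ldots,\phi_{k_i,i}) : X_i \to \mathbb{R}^{k_i}$ with $\|\Delta_i \phi_{j,i}\|_{L^\infty} \le \lambda$ whose normalized $L^1$-defect tends to $0$. Passing to a subsequence we may assume $k_i \equiv k$. Rescaling so $\meas_i(X_i) = 1$ and subtracting the mean of each $\phi_{j,i}$ (which preserves both $\dist \phi_{j,i}$ and $\Delta_i \phi_{j,i}$), the sketch preceding Proposition~\ref{compactne} yields uniform bounds on $\|\phi_{j,i}\|_{L^2}$ and $\|\Delta_i \phi_{j,i}\|_{L^\infty}$ depending only on $K,N,d,\lambda$. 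Bishop--Gromov combined with the diameter bound controls $\meas_i(B_1(x_i))$ two-sidedly for any base points $x_i$, so Theorem~\ref{hohohonoho} and Proposition~\ref{compactne} provide, after a further extraction, $(X_i,\dist_i,\meas_i) \to (X,\dist,\meas)$ in mGH and $\Phi_i \to \Phi$ in the sense of Definition~\ref{def:convgeneral} for some regular map $\Phi : X \to \mathbb{R}^k$, with $\|\Delta \phi_j\|_{L^\infty} \le \lambda$ inherited via Theorem~\ref{spectral2}.

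By definition, $\Phi_i^* g_{\mathbb{R}^k} \to \Phi^* g_{\mathbb{R}^k}$ strongly in $L^2$. Coupled with $\|\Phi_i^* g_{\mathbb{R}^k} - g_{X_i}\|_{L^1} \to 0$ this forces $g_{X_i} \to g_X$ in $L^1$, which upgrades to strong $L^2$-convergence since $|g_{X_i}|^2 = n_i \le N$ is uniformly bounded. The last clause of Proposition~\ref{weakriem} then yields $n_i = \dim_{\dist,\meas}(X) =: n$ for all large $i$ (in particular $k \le n$), while simultaneously $\Phi^* g_{\mathbb{R}^k} = g_X$, so $\Phi$ is a regular isometric immersion of the limit space into $\mathbb{R}^k$ with $k \le n$. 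Theorem~\ref{thm:bilip} now applies: $X$ is homeomorphic to an $n$-dimensional topological manifold without boundary and $\Phi$ is a locally bi-Lipschitz embedding into $\mathbb{R}^k$. If $k < n$, this violates invariance of Hausdorff dimension, since small balls in $X$ are locally Reifenberg flat of dimension $n$ and therefore have Hausdorff dimension $n$, whereas their bi-Lipschitz images lie in $\mathbb{R}^k$ and thus have Hausdorff dimension at most $k$. If $k = n$, invariance of domain makes $\Phi(X)$ open in $\mathbb{R}^n$; being also nonempty and compact, it is clopen in the connected space $\mathbb{R}^n$, forcing $\Phi(X) = \mathbb{R}^n$ and contradicting compactness.

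The main obstacle is to translate the scale-free integral hypothesis into convergences strong enough to bring Theorem~\ref{thm:bilip} into play on the limit. The decisive step is upgrading the a~priori weak convergence $g_{X_i} \weakto g_X$ from Proposition~\ref{weakriem} to strong $L^2$-convergence, which pins down $n_i = n$ along the tail of the sequence and preserves the inequality $k \le n$ in the limit; once this is available, the topological collision is immediate.
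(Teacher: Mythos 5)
Your proposal follows essentially the same contradiction argument as the paper: extract an mGH-limit via Theorem~\ref{hohohonoho} and Proposition~\ref{compactne}, upgrade $L^2$-weak to $L^2$-strong convergence of the Riemannian metrics using the vanishing $L^1$-defect and Proposition~\ref{weakriem}, identify the limit as a regular isometric immersion with $k \le n = \dim_{\dist,\meas}(X)$, and invoke Theorem~\ref{thm:bilip} to reach a topological contradiction. The only differences are cosmetic: you explicitly justify the wlog uniform $L^2$ bound by mean-subtraction and Poincar\'e (the paper just asserts it), and you spell out the final clash case by case ($k<n$ via dimension considerations, $k=n$ via invariance of domain plus compactness), whereas the paper compresses this into the single remark that an $n$-dimensional closed manifold admits no local topological embedding into $\mathbb{R}^k$ for $k\le n$. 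One small caveat: your $k<n$ step leans on ``Reifenberg flat $\Rightarrow$ Hausdorff dimension $n$,'' which is slightly indirect since the intrinsic Reifenberg theorem only yields bi-H\"older charts; the cleaner route (and the one implicit in the paper) is purely topological via invariance of domain, or via the fact that a set of positive $\mathcal{H}^n$-measure cannot bi-Lipschitz embed into $\mathbb{R}^k$ with $k<n$.
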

\begin{proof}
The proof is done by contradiction. If not, there exist $n \in \mathbb{N}$, $k \in \mathbb{N}_{\le n}$, a sequence of compact $\RCD(K, N)$ spaces $(X_i, \dist_i, \meas_i)$, and a sequence of regular maps $\Phi_i=(\phi_{1, i},\ldots, \phi_{k, i}):X_i \to \mathbb{R}^k$ such that $\|\Delta_i\phi_{j, i}\|_{L^{\infty}}\le \lambda$,  $\sup_{j, i}\|\phi_{j, i}\|_{L^2}<\infty$,  $\mathrm{diam}(X_i, \dist_i)\le d$,  $\meas_i(X_i)=1$, $\dim_{\dist_i, \meas_i}(X_i)=n$ and 
\begin{equation}\label{hohohomo}
\int_X|\Phi^*_ig_{\mathbb{R}^k}-g_{X_i}|\di \meas_i \to 0
\end{equation}
are satisfied.
Thus Proposition \ref{compactne} yields that after passing to a subsequence, there exists a regular map $\Phi:X \to \mathbb{R}^k$ such that $\Phi_i$ converge to $\Phi$. Therefore (\ref{hohohomo}) with Proposition \ref{weakriem} shows that $\Phi^*g_{\mathbb{R}^k}=g_X$ holds and that $g_{X_i}$ $L^2$-strongly converge to $g_X$ on $X$.
In particular
\begin{equation}
\dim_{\dist, \meas}(X)=\int_X|g_X|^2\di \meas=\lim_{i \to \infty}\int_{X_i}|g_{X_i}|^2\di \meas_i=\lim_{i \to \infty}\dim_{\dist_i, \meas_i}(X_i)=n.
\end{equation}
Therefore (2) of Theorem \ref{thm:bilip} yields that $X$ is homeomorphic to an $n$-dimensional closed manifold. In particular there is no local homeomorphism from $X$ into $\mathbb{R}^k$, which contradicts (1) of Theorem \ref{thm:bilip}. 
\end{proof}
%It is worth pointing out that the following is a direct consequence of Theorem \ref{yhug} with Propositions \ref{prop:low} and \ref{compactne}, where we omit the proof because it is easy to check this by contradiction.
\begin{theorem}[Gap theorem II]\label{mthm5900000}
For any $K \in \mathbb{R}$ and any $N \in [1, \infty)$ there exists $\epsilon_0:=\epsilon_0(K, N) \in (0, 1)$ such that we have 
\begin{equation}
\|\Phi^*g_{\mathbb{R}^k}-g_X\|_{L^{\infty}} \ge \epsilon_0
\end{equation}
for any compact $\RCD(K, N)$ space $(X, \dist, \meas)$ with $n=\dim_{\dist, \meas}(X)$, any $k \in \mathbb{N}_{\le n}$ and any regular map $\Phi=(\phi_1,\ldots, \phi_k):X \to \mathbb{R}^k$.
\end{theorem}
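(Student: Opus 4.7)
The plan is to follow the same blueprint as Theorem \ref{mthm5}, but to exploit the fact that the $L^\infty$ bound on $\Phi^*g_{\mathbb{R}^k}-g_X$ is a pointwise (a.e.) statement that, combined with Bishop-Gromov, verifies the hypothesis of Theorem \ref{yhug} directly inside each given space. In contrast with Theorem \ref{mthm5}, no contradiction/compactness argument is needed and no $L^\infty$ bound on the $\Delta\phi_i$'s enters—this is precisely what allows the Lebesgue exponent to be pushed from $L^1$ up to $L^\infty$.

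The starting point is an a priori Lipschitz bound. Since $|g_X|^2=n\le N$ holds $\meas$-a.e.\ by Proposition \ref{Riemdef} and the Hilbert--Schmidt norm satisfies
\[
\sum_i |\nabla\phi_i|^4 \;\le\; \sum_{i,j}\langle\nabla\phi_i,\nabla\phi_j\rangle^2 \;=\; |\Phi^*g_{\mathbb{R}^k}|_{HS}^2 \;\le\; (|g_X|+\epsilon)^2,
\]
the hypothesis $\|\Phi^*g_{\mathbb{R}^k}-g_X\|_{L^\infty}<\epsilon\le 1$ gives $\|\nabla\phi_i\|_{L^\infty}\le C(N):=(\sqrt{N}+1)^{1/2}$. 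Exactly as in the computation (\ref{ponthfgrb}), Bishop-Gromov then yields, at any $y\in X$ and for $r$ small (so that $r^2K\ge -(N-1)$),
\[
\frac{1}{\meas(B_r(y))}\int_{B_{\delta^{-1}r}(y)}|\Phi^*g_{\mathbb{R}^k}-g_X|\di\meas \;\le\; \epsilon\cdot\frac{\meas(B_{\delta^{-1}r}(y))}{\meas(B_r(y))} \;\le\; \epsilon\cdot C'(N,\delta),
\]
where $C'(N,\delta)$ is the Bishop-Gromov volume ratio at scale $\delta^{-1}$. Choosing $\delta=\delta(\epsilon,K,N)\to 0^+$ as $\epsilon\to 0^+$ with $\epsilon\,C'(N,\delta)<\delta$ (e.g.\ $\delta$ a suitable root of $\epsilon$) turns $\Phi$ into a locally uniformly $\delta$-isometric immersion on $A=X$ in the sense of Definition \ref{delta}.

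Theorem \ref{yhug} then yields, once $\delta$ is small enough, that $X$ is locally $\Psi(\delta;K,N,k,C(N))$-Reifenberg flat and that $\Phi$ is a local bi-Lipschitz embedding. Because $k\le n\le N$ and $C\le C(N)$, this Reifenberg constant ultimately depends only on $K,N$ and $\epsilon$, and can be made as small as we wish by shrinking $\epsilon$. For such $\epsilon$, the intrinsic Reifenberg theorem \cite[Thm.A.1.2]{CheegerColding1}, applied exactly as in the final step of the proof of Theorem \ref{thm:bilip}, shows that $X$ is homeomorphic to an $n$-dimensional closed topological manifold. But $\Phi$ is simultaneously a local topological embedding of this $n$-manifold into $\mathbb{R}^k$ with $k\le n$. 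For $k<n$ this is already ruled out by invariance of domain. For $k=n$, the set $\Phi(X)$ is both open in $\mathbb{R}^n$ (local homeomorphism between equidimensional manifolds) and compact (image of compact $X$), hence a nonempty clopen subset of the connected space $\mathbb{R}^n$, which forces $\Phi(X)=\mathbb{R}^n$ and contradicts boundedness. The resulting contradiction supplies the asserted threshold $\epsilon_0=\epsilon_0(K,N)$.

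The only delicate point is the bookkeeping of constants: the output $\Psi$ of Theorem \ref{yhug} must be made uniformly small in the auxiliary parameters $k$ and $C$ so that $\epsilon_0$ depends only on $K$ and $N$. Both parameters are controlled by $N$ alone, via $k\le n\le N$ and the bound $C\le C(N)$ extracted above, so this is automatic.
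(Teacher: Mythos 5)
Your proof is correct and follows the same blueprint as the paper's own argument: derive a Lipschitz bound on $\Phi$ from the $L^\infty$ smallness of $\Phi^*g_{\mathbb{R}^k}-g_X$, verify (via Bishop--Gromov) the locally uniformly $\delta$-isometric hypothesis of Theorem \ref{yhug}, invoke the intrinsic Reifenberg theorem to conclude $X$ is a closed $n$-manifold, and then observe that the local topological embedding $\Phi:X\to\mathbb{R}^k$ with $k\le n$ is impossible. You spell out explicitly two steps the paper leaves implicit (the Bishop--Gromov volume-ratio calculation needed to enter Definition \ref{delta}, and the invariance-of-domain/clopen argument ruling out $k\le n$), but this is the same route, not a different one.
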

\begin{proof}
Assume that $\|\Phi^*g_{\mathbb{R}^k}-g_X\|_{L^{\infty}}$ is sufficiently small depending only on $K, N$. Then we have $\|\nabla \phi_i\|_{L^{\infty}}\le 2n$ because 
\begin{equation}
\sum_i|\nabla \phi_i|^2=\langle \Phi^*g_{\mathbb{R}^k}, g_X\rangle =\langle \Phi^*g_{\mathbb{R}^k}-g_X, g_X \rangle +|g_X|^2 \le \sqrt{n}\|\Phi^*g_{\mathbb{R}^k}-g_X\|_{L^{\infty}}+n.
\end{equation}
Thus (1) of Theorem \ref{yhug} with \cite[Thm.A.1.2]{CheegerColding1} implies that $X$ is homeomorphic to an $n$-dimensional closed manifold. In particular there is no local homeomorphism from $X$ into $\mathbb{R}^k$, which contradicts (2) of Theorem \ref{yhug}.
\end{proof}
%Using Proposition \ref{appp} instead of Theorem \ref{spectral2}, by an argument similar to the proof of Proposition %\ref{prop:appl}, we can get the following:
%\begin{proposition}[Approximation]\label{prop:appl2}
%Let $\hat{\Phi}=(\hat{\phi}_1, \ldots, \hat{\phi}_k):X \to \mathbb{R}^k$ be a regular Lipschitz map. Then there exists a sequence of regular Lipschitz maps $\hat{\Phi}_i=(\hat{\phi}_{1, i},\ldots, \hat{\phi}_{k, i}):X_i \to \mathbb{R}^k$ such that $\hat{\Phi}_i$ converge to $\hat{\Phi}$ and that 
%\begin{equation}
%\sup_{j, i}\left(\|\nabla \hat{\phi}_{j, i}\|_{L^{\infty}} +\|\Delta_i\hat{\phi}_{j, i}\|_{L^{\infty}}\right)<\infty
%\end{equation}
%holds.
%\end{proposition}
\subsection{Topological finiteness theorems}\label{finiteness}
We are now in a position to give two topological finiteness theorems.
The first one is stated for any compact (not necessary non-collapsed) $\RCD(K, N)$ spaces as follows:
\begin{theorem}[Topological finiteness theorem I]\label{topological1}
For any $K \in \mathbb{R}$, any $k \in \mathbb{N}$, any $N \in [1, \infty)$ and all $d, \lambda \in (0, \infty)$, there exists $\delta:=\delta(K, k, N, d, \lambda) \in (0, 1)$ such that the following holds.
Let us denote by $\mathcal{M}:=\mathcal{M}(K, k, N, d, \lambda)$ the set of (isometry classes of) compact $\RCD(K, N)$ space $(X, \dist, \meas)$ satisfying that $\mathrm{diam} (X, \dist)\le d$ and that there exists a regular map $\Phi=(\phi_1,\ldots, \phi_k):X \to \mathbb{R}^{k}$ such that $\|\Delta \phi_i\|_{L^{\infty}}\le \lambda$ and
\begin{equation}\label{nbggh}
\|\Phi^*g_{\mathbb{R}^{k}}-g_X\|_{L^{\infty}}\le \delta
\end{equation}
are satisfied. Then for any $(X, \dist, \meas) \in \mathcal{M}$, $X$ is homeomorphic to an $n$-dimensional topological closed manifold. Moreover $\mathcal{M}$ has finitely many members up to homeomorphism.
\end{theorem}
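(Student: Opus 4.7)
The strategy is to combine the uniform control provided by the global $L^\infty$ bound (\ref{nbggh}) with Theorem \ref{yhug} to obtain a \emph{uniform} Reifenberg-flatness scale across the whole family $\mathcal{M}$, and then to apply the Cheeger--Colding intrinsic Reifenberg theorem and its topological stability counterpart \cite[Thm.A.1.2 and A.1.3]{CheegerColding1} to conclude respectively that each $X\in\mathcal{M}$ is a topological manifold and that there are only finitely many homeomorphism types. Throughout I normalize $\meas(X)=1$, which does not affect the Laplacian, the gradient, the diameter, or condition (\ref{nbggh}).

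\emph{Uniform regularity and Reifenberg flatness of each $X\in\mathcal{M}$.} The argument recalled at the beginning of Section \ref{finiteness}, based on the Poincar\'e inequality, the Cauchy--Schwarz inequality and the regularity results of \cite{AMS, Jiang}, upgrades the uniform $L^\infty$ bound on $\Delta\phi_i$ (together with $\mathrm{diam}\le d$ and $\meas(X)=1$) to a uniform Lipschitz bound $\|\nabla\phi_i\|_{L^\infty}\le C(K,N,d,\lambda)=:C$. Combining (\ref{nbggh}) with the Bishop--Gromov inequality as in the proof of Lemma \ref{hoho} gives, for every $y\in X$, every $r\in(0,d]$ and every $\alpha\in(0,1)$,
\[
\frac{1}{\meas(B_r(y))}\int_{B_{\alpha^{-1}r}(y)}|\Phi^*g_{\mathbb{R}^k}-g_X|\di\meas \le \delta\cdot C'(K,N,d,\alpha),
\]
so choosing $\alpha=\alpha(\delta,K,N,d)$ suitably shows that $\Phi$ is a locally uniformly $\Psi(\delta)$-isometric immersion in the sense of Definition \ref{delta} at any scale up to the diameter. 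Theorem \ref{yhug}(1) then provides $r_1=r_1(K,N,k,d,\lambda)>0$ and $\eta=\Psi(\delta;K,N,k,C)$ such that every $X\in\mathcal{M}$ is $\eta$-Reifenberg flat at every scale $\le r_1$ around every point. Taking $\delta$ small enough (depending only on $K,N,k,d,\lambda$) so that $\eta$ falls below the threshold of the intrinsic Reifenberg theorem \cite[Thm.A.1.2]{CheegerColding1}, it follows that $X$ is homeomorphic to a closed $n$-dimensional topological manifold, where $n=\dim_{\dist,\meas}(X)$.

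\emph{Finiteness.} Suppose, for contradiction, that $\mathcal{M}$ contains a sequence $\{(X_i,\dist_i,\meas_i,\Phi_i)\}_i$ of pairwise non-homeomorphic members. Since the essential dimension $\dim_{\dist_i,\meas_i}(X_i)\in\{1,\ldots,\lfloor N\rfloor\}$ takes only finitely many values, I may assume after extracting a subsequence that it equals a fixed $n$. The uniform diameter bound together with $\meas_i(X_i)=1$ and Theorem \ref{hohohonoho} then yield a further subsequence with $(X_i,\dist_i,\meas_i)\stackrel{\mathrm{mGH}}{\to}(X_\infty,\dist_\infty,\meas_\infty)$ for some compact $\RCD(K,N)$ space. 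The uniform $\eta$-Reifenberg flatness at scale $r_1$ passes to the GH-limit, so $X_\infty$ is also $\eta$-Reifenberg flat at scale $r_1$, and the topological stability statement \cite[Thm.A.1.3]{CheegerColding1} then forces $X_i$ to be homeomorphic to $X_\infty$ for all sufficiently large $i$, contradicting non-homeomorphism. The main technical point of the argument is thus the extraction of the \emph{uniform} Reifenberg scale $r_1$; this relies essentially on the globality of (\ref{nbggh}), which makes the local smallness condition of Definition \ref{delta} hold at any scale via Bishop--Gromov, and on the uniformity of the Lipschitz constant $C$ inherited from the $L^\infty$ bound on $\Delta\phi_i$.
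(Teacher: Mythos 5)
Your proof is correct and follows the same overall strategy as the paper's: establish uniform Reifenberg flatness via the argument of Theorem \ref{yhug} and then invoke the Cheeger--Colding intrinsic Reifenberg theorem and its topological stability counterpart. There is, however, a genuinely different route at one key step. The paper's proof extracts a limit regular map $\Phi:X_\infty\to\mathbb{R}^k$ via Proposition \ref{compactne}, establishes $\Phi^*g_{\mathbb{R}^k}=g_{X_\infty}$ using the $L^2$-strong convergence of the Riemannian metrics (Proposition \ref{weakriem}), matches the essential dimensions by comparing $\|g_{X_i}\|_{L^2}$, and then applies Theorem \ref{thm:bilip} to deduce that $X_\infty$ is Reifenberg flat. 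You instead pass the uniform $\eta$-Reifenberg-flatness condition directly from $X_i$ to the GH-limit $X_\infty$, which automatically carries the same model dimension $n$ along. Your route bypasses the convergence-of-tensors machinery (Propositions \ref{compactne} and \ref{weakriem}) entirely and is shorter and more geometric; the paper's route is uniform with the proofs of Theorems \ref{mthm2}, \ref{thm:sphere}, and \ref{topological2}, where only an $L^1$ bound is available and the global $L^\infty$ bound cannot be used to get a priori uniform Reifenberg flatness. One point you state somewhat tersely is the uniformity of the Reifenberg scale $r_1$ across all of $\mathcal{M}$: tracing through Theorem \ref{yhug}, the cutoff $r_0$ there is constrained by $\max_i\|\Delta\phi_i\|_{L^\infty}\le\lambda$ and by the locally-uniform-$\delta$-isometric-immersion condition of Definition \ref{delta}; your Bishop--Gromov argument does show the latter holds at every scale, so $r_0$ (hence $r_1$) is indeed uniform, but this is exactly the content that the paper packages as ``the sequence is uniformly Reifenberg flat'' and proves by applying Theorem \ref{highregular} to rescalings, so it deserves the same level of explicitness.
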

\begin{proof}
Since the proof of the first statement is quite similar to that of Theorems \ref{mthm2} and \ref{thm:sphere}, we give a proof only for the topological finiteness statement. 

If not, then we can find a sequence of compact $\RCD(K, N)$ spaces $(X_i, \dist_i, \meas_i)$ and a sequence of regular maps $\Phi_i=(\phi_{1, i},\ldots, \phi_{k, i}):X_i \to \mathbb{R}^{k}$ such that $\mathrm{diam} (X_i, \dist_i) \le d$ holds, that $\meas_i(X_i)=1$ holds, that $\|\Delta_i\phi_{j, i}\|_{L^{\infty}} \le \lambda$ holds, that $\sup_{j, i}\|\phi_{j, i}\|_{L^{2}}<\infty$ holds, that $X_i$ is not homeomorphic to $X_j$ for all $i \neq j$, and that
\begin{equation}\label{99908}
\|\Phi^*_ig_{\mathbb{R}^{k}}-g_{X_i}\|_{L^{\infty}}\to 0
\end{equation} 
holds. 
Applying Theorem \ref{hohohonoho} with Proposition \ref{compactne} shows that after passing to a subsequence, there exist a compact $\RCD(K, N)$ space $(X, \dist, \meas)$ and a regular Lipschitz map $\Phi:X \to \mathbb{R}^k$ such that $(X_i, \dist_i, \meas_i)$ mGH-converge to $(X, \dist, \meas)$ and that $\Phi_i$ converge to $\Phi$. In particular (\ref{99908}) with Proposition \ref{weakriem} yields that $g_{X_i}$ $L^2$-strongly converge to $g_X$ on $X$ and that
\begin{equation}\label{qqwweerr}
\Phi^*g_{\mathbb{R}^k}=g_X
\end{equation}
holds. Thus Theorem \ref{thm:bilip} yields that $X$ is Reifenberg flat. 
With no loss of generality we can assume that $\dim_{\dist_i, \meas_i}(X_i)$ does not depend on $i$, thus we denote it by $n$. 
Then we have
\begin{equation}\label{referefer}
\dim_{\dist, \meas}(X)=\int_X|g_X|^2\di \meas=\lim_{i \to \infty}\int_{X_i}|g_{X_i}|^2\di \meas_i=\lim_{i \to \infty}\dim_{\dist_i, \meas_i}(X_i)=n.
\end{equation}

On the other hand applying Theorem \ref{highregular} with (\ref{99908}) (see also the proof of Theorem \ref{yhug}) shows that the sequence of $\{(X_i, \dist_i)\}_i$ is uniformly Reifenberg flat.
Therefore applying the topological stability theorem proved in \cite[Thm.A.1.2 and A.1.3]{CheegerColding1} with the Reifenberg flatness of $X$ and (\ref{referefer}) yields that  $X$ is homeomorphic to $X_i$ for any sufficiently large $i$, which is a contradiction.
\end{proof}
The second one is stated for non-collapsed $\RCD(K, n)$ spaces, however the assumption (\ref{hytfff}) is weaker than (\ref{nbggh}):
\begin{theorem}[Topological finiteness theorem II]\label{topological2}
For any $K \in \mathbb{R}$, any $k, n \in \mathbb{N}$ and all $d, \lambda \in (0, \infty)$, there exists $\delta:=\delta(K, k, n, d, \lambda) \in (0, 1)$ such that the following holds.
Let us denote by $\mathcal{M}:=\mathcal{M}(K, k, n, d, \lambda)$ the set of (isometry classes of) compact non-collapsed $\RCD(K, n)$ space $(X, \dist, \mathcal{H}^n)$ satisfying that $\mathrm{diam} (X, \dist)\le d$ and that there exists a regular map $\Phi=(\phi_1,\ldots, \phi_k):X \to \mathbb{R}^{k}$ such that $\|\Delta \phi_i\|_{L^{\infty}}\le \lambda$ and
\begin{equation}\label{hytfff}
\frac{1}{\mathcal{H}^n(X)}\int_X|\Phi^*g_{\mathbb{R}^{k}}-g_X|\di \mathcal{H}^n\le \delta
\end{equation}
are satisfied. Then for any $(X, \dist, \mathcal{H}^n) \in \mathcal{M}$, $X$ is homeomorphic to an $n$-dimensional topological closed manifold. Moreover $\mathcal{M}$ has finitely many members up to homeomorphism.
\end{theorem}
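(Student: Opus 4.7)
My plan is to argue by contradiction, following the scheme of the proof of Theorem \ref{topological1} but with the weaker $L^1$-hypothesis (\ref{hytfff}) compensated by the non-collapsing assumption together with the $L^\infty$-Laplacian bound. If the theorem failed for some $K,k,n,d,\lambda$, I could find a sequence $(X_i,\dist_i,\mathcal{H}^n)$ of compact non-collapsed $\RCD(K,n)$ spaces with $\mathrm{diam}(X_i,\dist_i)\le d$ and regular maps $\Phi_i=(\phi_{1,i},\ldots,\phi_{k,i}):X_i\to\mathbb{R}^k$ satisfying $\|\Delta_i\phi_{j,i}\|_{L^\infty}\le\lambda$ and
$$\mathcal{H}^n(X_i)^{-1}\int_{X_i}\bigl|\Phi_i^*g_{\mathbb{R}^k}-g_{X_i}\bigr|\di\mathcal{H}^n\longrightarrow 0,$$
along which either some $X_i$ fails to be a topological manifold or the $X_i$ are pairwise non-homeomorphic. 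I would first rescale to the probability measures $\meas_i:=\mathcal{H}^n(X_i)^{-1}\mathcal{H}^n$, which preserves the $\RCD(K,n)$ condition, the essential dimension $n$, and the Laplacian; by Bishop--Gromov together with $\mathrm{diam}\le d$, the values $\meas_i(B_1(x_i))$ are uniformly bounded above by $1$ and below by a positive constant depending only on $K,n,d$, so Theorem \ref{hohohonoho} yields an mGH-convergent subsequence to a compact $\RCD(K,n)$ space $(X,\dist,\meas)$.

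The Laplacian bound together with the regularity results recalled at the start of Section \ref{finiteness} gives $\sup_{j,i}\|\nabla_i\phi_{j,i}\|_{L^\infty}<\infty$, so Proposition \ref{compactne} produces (after a further subsequence) a regular map $\Phi:X\to\mathbb{R}^k$ with $\Phi_i\to\Phi$ in the sense of Definition \ref{def:convgeneral}. The key step is then the upgrade from $L^1$ to $L^2$: combining the uniform $L^\infty$-bound on $|\Phi_i^*g_{\mathbb{R}^k}-g_{X_i}|$ (from the Lipschitz control) with the hypothesis gives $\|\Phi_i^*g_{\mathbb{R}^k}-g_{X_i}\|_{L^2(\meas_i)}\to 0$. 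Since $\Phi_i^*g_{\mathbb{R}^k}\to\Phi^*g_{\mathbb{R}^k}$ in $L^2$-strong by Definition \ref{def:convgeneral}, and $g_{X_i}\to g_X$ in $L^2$-weak by Proposition \ref{weakriem}, I conclude $\Phi^*g_{\mathbb{R}^k}=g_X$ and that $g_{X_i}\to g_X$ is actually $L^2$-strong. The second half of Proposition \ref{weakriem} then forces $\dim_{\dist,\meas}(X)=n$, and Theorem \ref{wnon} gives $\meas=a\mathcal{H}^n$ for some $a>0$, so $(X,\dist,\mathcal{H}^n)$ is a compact non-collapsed $\RCD(K,n)$ space on which $\Phi$ is a regular isometric immersion.

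Now Theorem \ref{thm:bilip} makes $X$ a topological $n$-manifold which is locally Reifenberg flat. To close the contradiction I would promote this to uniform Reifenberg flatness of the family $\{X_i\}$ by copying verbatim the argument $(\spadesuit)$ from the proof of Theorem \ref{qqw}: Reifenberg flatness of the limit $X$ transfers, via mGH convergence, to GH-closeness of large balls in $X_i$ to Euclidean balls, and then Bishop--Gromov monotonicity combined with the almost-rigidity Theorem \ref{thm:bishop2} propagates this property uniformly to all smaller scales. The intrinsic Reifenberg theorem \cite[Thm.A.1.2 and A.1.3]{CheegerColding1} then forces $X_i\cong X$ for all sufficiently large $i$, which contradicts both the non-manifold case and the pairwise-non-homeomorphic case. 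I expect the $L^1$-to-$L^2$ upgrade to be the main obstacle: without the $L^\infty$-gradient bound extracted from the Laplacian bound the identity $\Phi^*g_{\mathbb{R}^k}=g_X$ on the limit is inaccessible, and the entire chain leading to the non-collapsed $\RCD(K,n)$ structure of $X$ and the applicability of Theorem \ref{thm:bilip} breaks down.
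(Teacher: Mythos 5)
Your proposal is correct and follows essentially the same route as the paper: the paper's own proof of Theorem~\ref{topological2} is a two-line pointer referring back to the proof of Theorem~\ref{topological1} for the compactness/limit analysis and to the argument $(\spadesuit)$ in the proof of Theorem~\ref{qqw} for the uniform Reifenberg flatness, which is exactly the combination you spelled out. Your explicit $L^1$-to-$L^2$ upgrade via the uniform $L^\infty$ bound on $|\Phi_i^*g_{\mathbb{R}^k}-g_{X_i}|$ is the correct way to make the reduction to Theorem~\ref{topological1}'s scheme rigorous (the same device is used implicitly in the proof of Theorem~\ref{qqw}); the only small omission is that, to invoke Proposition~\ref{compactne}, one should first normalize each $\phi_{j,i}$ to have zero mean so that the $L^\infty$-Laplacian bound and the argument around (\ref{eq:fdrrrefer}) yield the required uniform $L^2$ bound on the components, but this is a standard step that does not affect $\Phi_i^*g_{\mathbb{R}^k}$ or $\Delta_i\phi_{j,i}$.
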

\begin{proof}
The proof is quite similar to that of Theorem \ref{topological1}. The only different part is in the proof of the uniform Reifenberg flatness. However this is justified by the standard way using the non-collapsed condition as already done in the proof of Theorem \ref{qqw}. Thus we omit it.
\end{proof}

\end{document}